\DeclarePairedDelimiter\ceil{\lceil}{\rceil}
\tikzset{curve/.style={settings={#1},to path={(\tikztostart)
    .. controls ($(\tikztostart)!\pv{pos}!(\tikztotarget)!\pv{height}!270:(\tikztotarget)$)
    and ($(\tikztostart)!1-\pv{pos}!(\tikztotarget)!\pv{height}!270:(\tikztotarget)$)
    .. (\tikztotarget)\tikztonodes}},
    settings/.code={\tikzset{quiver/.cd,#1}
        \def\pv##1{\pgfkeysvalueof{/tikz/quiver/##1}}},
    quiver/.cd,pos/.initial=0.35,height/.initial=0}
\tikzset{tail reversed/.code={\pgfsetarrowsstart{tikzcd to}}}
\tikzset{2tail/.code={\pgfsetarrowsstart{Implies[reversed]}}}
\tikzset{2tail reversed/.code={\pgfsetarrowsstart{Implies}}}
\tikzset{no body/.style={/tikz/dash pattern=on 0 off 1mm}}
\newtheorem{theorem}{Theorem}[section]
\newtheorem{proposition}[theorem]{Proposition}
\newtheorem{lemma}[theorem]{Lemma}
\newtheorem{corollary}[theorem]{Corollary}
\theoremstyle{definition}
\newcommand{\qedno}{\null\nobreak\hfill\ensuremath{\square}} 
\newcommand{\uxa}{\ensuremath{(\underline{X},\underline{A})}}
\newcommand{\caa}{\ensuremath{(\underline{CA},\underline{A})}}
\newcommand{\cxx}{\ensuremath{(\underline{CX},\underline{X})}}
\newcommand{\zk}{\ensuremath{\mathcal{Z}_{K}}}
\title{Loop space decompositions of moment-angle complexes associated to two dimensional simplicial complexes}
\author{Lewis Stanton}
\begin{document}
\subjclass[2020]{Primary 55P15, 55P35.}
\keywords{homotopy type, loop space, polyhedral product}

\begin{abstract}
We show that the loop space of a moment-angle complex associated to a $2$-dimensional simplicial complex decomposes as a finite type product of spheres, loops on spheres, and certain indecomposable spaces which appear in the loop space decomposition of Moore spaces. We also give conditions on certain subcomplexes under which, localised away from sufficiently many primes, the loop space of a moment-angle complex decomposes as a finite type product of spheres and loops on spheres.
\end{abstract}

\maketitle

\section{Introduction}

Polyhedral products are a natural subspace of the Cartesian product which are indexed by the face poset of a simplicial complex. They have generated much interest due to their far reaching applications across mathematics (see \cite{BBC}). Let $K$ be a simplicial complex on the vertex set $[m]$, and for $1 \leq i \leq m$, let $(X_i,A_i)$ be a pair of pointed $CW$-complexes, where $A_i$ is a pointed $CW$-subcomplex of $X_i$. The \textit{polyhedral product associated to} $K$ is \[\uxa^K = \bigcup\limits_{\sigma \in K} \left(\prod\limits_{i=1}^m Y^\sigma_i\right),\] where $Y^\sigma_i = X_i$ if $i \in \sigma$, and $Y^\sigma_i = A_i$ if $i \notin \sigma$. An important special case, which appears in toric topology, is when $(X_i,A_i) = (D^2,S^1)$ for all $i$. These polyhedral products are called \textit{moment-angle complexes}, and are denoted $\mathcal{Z}_K$. 

One particular problem associated to moment-angle complexes is understanding their loop spaces. In the case that $K$ is a flag complex, the loop homology of moment-angle complexes models commutator subalgebras of algebraic analogues of right angled Coxeter groups \cite{GPTW}. More generally, for any simplicial complex $K$, the loop space of the corresponding moment-angle complex is related to a certain diagonal subspace arrangement \cite{D}. When $K$ is flag, most homotopical and homological information about $\Omega \zk$ is known. In particular, a coarse description of $\Omega \zk$ was given in \cite{S}, which was upgraded to an explicit decomposition in \cite{V}. This allowed for a complete description of $H_*(\Omega \zk;R)$ as an algebra, where $R$ is any commutative ring with unit \cite{V}. Another interesting case is when $K$ is a $1$-dimensional simplicial complex (a graph). In this case, it was shown in \cite{S} that $\Omega \zk$ decomposes as a finite type product of spheres and loops on spheres. In particular, this implies that the homology of $\Omega \zk$ is torsion free. In this paper, we study the case of a $2$-dimensional simplicial complex, and give a coarse description of $\Omega \zk$ in this case. The homology of $\Omega \zk$ in this case can contain torsion, and this will require the introduction of certain indecomposable torsion spaces which have been considered in \cite{CMN1,CMN2,C}. We also give a coarse description of $\Omega \zk$ as a product of spheres and loops on spheres after localising away from a finite set of primes, under conditions on the rational cohomology of certain full subcomplexes of $K$. 

It is useful to identify two families of $H$-spaces. For a collection of topological spaces $\mathcal{X}$, let $\prod \mathcal{X}$ be the collection of spaces homotopy equivalent to a finite type product of spaces in $\mathcal{X}$. Let $\mathcal{P} := \{S^1,S^3,S^7, \Omega S^n \: | \: n \geq 2, n \notin\{2,4,8\}\}$. In \cite{S}, it was shown that $\prod \mathcal{P}$ is closed under retracts, and this was the key ingredient in proving coarse descriptions of $\Omega \zk$, in the case that $K$ is the $k$-skeleton of a flag complex. In this paper, we extend this to include torsion spaces. Denote the mod $p^r$ Moore space by $P^n(p^r)$, which is the mapping cone of the degree $p^r$ map on $S^{n-1}$. By \cite[Theorem 1.1]{HW}, for a finite type $H$-space $X$ localised at a prime $p$, there is a unique decomposition of $X$, up to homotopy, as a finite type product of indecomposable spaces. Let $\mathcal{T}$ be the collection of indecomposable spaces which appear in the decomposition of the loop space of a wedge of Moore spaces of the form $\bigvee_{i=1}^m P^{n_i}(p_i^{r_i})$, where $m \geq 2$, $n_i \geq 3$, $p_i$ is a prime and $r_i \geq 1$. Through an adaptation of the argument in \cite{S}, we will show in Section ~\ref{sec:closureofPandT} that $\prod (\mathcal{P} \cup \mathcal{T})$ is also closed under retracts. This is the key technical result which is required to prove the main result of this paper. \begin{theorem}[Theorem ~\ref{thm:2dimloop} in the text]\label{thm:2dimintro}
 Let $K$ be a $2$-dimensional simplicial complex. Then $\Omega \mathcal{Z}_{K} \in \prod (\mathcal{P} \cup \mathcal{T})$.
\end{theorem}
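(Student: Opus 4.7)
The plan is to exhibit $\Omega \zk$ as a retract of $\Omega W$ for a finite wedge $W$ of spheres and mod-$p^r$ Moore spaces $P^n(p^r)$ with $n \geq 3$. Once that is set up, the Hilton-Milnor theorem together with the Cohen-Moore-Neisendorfer analysis of loop spaces on wedges of Moore spaces puts $\Omega W$ inside $\prod(\mathcal{P} \cup \mathcal{T})$ essentially by the definition of $\mathcal{T}$; the closure of this class under retracts established in Section \ref{sec:closureofPandT} then forces $\Omega \zk \in \prod(\mathcal{P} \cup \mathcal{T})$.

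To identify the candidate wedge $W$, start from the Bahri-Bendersky-Cohen-Gitler stable decomposition
\[
\Sigma \zk \simeq \bigvee_{\emptyset \neq I \subseteq [m]} \Sigma^{|I|+2}|K_I|,
\]
where $K_I$ is the full subcomplex of $K$ on the vertex set $I$. Because $K$ is $2$-dimensional, every $|K_I|$ has dimension at most $2$ and its integral homology is concentrated in degrees $0$, $1$, and $2$, with possible torsion only in $H_1$. Since $|I|+2 \geq 3$, each suspension $\Sigma^{|I|+2}|K_I|$ is a simply-connected CW complex with cells in three consecutive dimensions lying in the stable range, and the standard decomposition of such complexes (obtained via Smith normal form of the cellular boundary maps) splits it as a wedge of spheres $S^n$ and Moore spaces $P^n(p^r)$ with $n \geq 3$. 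Hence $\Sigma \zk$ is itself such a wedge $W$.

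To upgrade this stable splitting into a loop-space retraction on the unsuspended $\zk$, I would argue by induction on the number of $2$-simplices of $K$. The base case in which $K$ is $1$-dimensional is exactly the main theorem of \cite{S}. For the inductive step, pick a maximal $2$-face $\sigma \in K$ and set $K' = K \setminus \{\sigma\}$; since $\partial \sigma \subset K'$, there is a homotopy pushout of polyhedral products expressing $\zk$ in terms of $\mathcal{Z}_{K'}$ and the attaching piece associated to $\sigma$. The inductive hypothesis supplies a retraction of $\Omega \mathcal{Z}_{K'}$ off a suitable $\Omega W'$, and the BBCG analysis prescribes the additional wedge summands to attach to $W'$ to account for the new full subcomplexes containing $\sigma$. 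Combining the two retractions across the pushout then yields the desired map realizing $\Omega \zk$ as a retract of $\Omega W$.

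The main obstacle is extending the loop-space retraction across the $2$-cell attaching when torsion is present. In the graph case \cite{S}, the entire argument takes place inside $\prod \mathcal{P}$, where retractions can be constructed prime-by-prime using only torsion-free information. For $2$-dimensional $K$, Moore summands appear in the BBCG decomposition and contribute $\mathrm{Tor}$ terms to the Pontryagin product on $H_*(\Omega \zk)$ that obstruct any direct imitation of the graph argument. Absorbing these torsion interactions is precisely the role of the enlarged closure property of $\prod(\mathcal{P} \cup \mathcal{T})$ established in Section \ref{sec:closureofPandT}, and verifying at each inductive stage that the assembled section remains detectable inside this larger class is the crux of the proof.
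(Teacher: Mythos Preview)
Your overall strategy has a genuine gap at the inductive step. The pushout machinery from \cite{S} (and its extension in Theorem~\ref{thm:pushoutofPisinP} of this paper) requires writing $K = K_1 \cup_L K_2$ with $L$ a \emph{full} subcomplex of both pieces. Removing a maximal $2$-face $\sigma$ does not fit this framework: in the decomposition $K = K' \cup_{\partial\sigma} \sigma$, the boundary $\partial\sigma$ is \emph{not} a full subcomplex of $\sigma = \Delta^2$ (the full subcomplex of $\Delta^2$ on its three vertices is $\Delta^2$ itself). Without fullness, the known loop-space splittings for polyhedral product pushouts do not apply, and you have not supplied an alternative mechanism for ``combining the two retractions across the pushout.'' You correctly identify this as the crux, but the proposal as written does not actually carry it out; looping a homotopy pushout over a non-full intersection gives no obvious control.

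There is also a disconnect between your opening plan and the induction. Knowing $\Sigma \zk \simeq W$ does \emph{not} by itself exhibit $\Omega\zk$ as a retract of $\Omega W$; desuspending the BBCG splitting is a genuinely nontrivial step. Your induction is meant to produce such a retraction, but since the inductive step fails, the stable information never gets upgraded.

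The paper avoids all of this by using a different pushout. Instead of peeling off a $2$-face, it iteratively applies the vertex-neighbourhood decomposition (Lemma~\ref{decomposesimpcomp}), which \emph{is} a pushout over full subcomplexes, to reduce to the full subcomplexes $K_I \in C_K$ whose $1$-skeleton is complete (Theorem~\ref{thm:inPcupTiffullsubcompleteskel}). For such $K_I$ of dimension $2$ the complex is $1$-neighbourly, and the Iriye--Kishimoto result (Lemma~\ref{lem:neighbourlyBBCG}) gives an \emph{unstable} wedge decomposition $\mathcal{Z}_{K_I} \simeq \bigvee \Sigma^{1+|J|}|K_J|$. Each summand then lies in $\bigvee(\mathcal{W}\cup\mathcal{M})$ for dimension/connectivity reasons, and Lemma~\ref{lem:suspensionofTisinW'} finishes. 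So the key input you are missing is the Iriye--Kishimoto desuspension for neighbourly complexes, which replaces your proposed cell-by-cell induction.
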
 

In Section ~\ref{sec:closureofWcupM}, a collection of co-$H$ spaces, $\bigvee (\mathcal{W} \cup \mathcal{M})$, related to $\prod (\mathcal{P} \cup \mathcal{T})$ will be defined, and it will be shown that this collection is also closed under retracts. An important ingredient of the proof of Theorem ~\ref{thm:2dimintro} is a generalisation of \cite[Theorem 1.1]{S}. For a simplicial complex $K$, let $C_K$ be the collection of full subcomplexes of $K$ whose $1$-skeleton has no missing edges. The generalisation states $\Omega \zk$ being in $\prod (\mathcal{P} \cup \mathcal{T})$ depends only on $\Omega \mathcal{Z}_{K_I}$ being in $\prod(\mathcal{P} \cup \mathcal{T})$ for each full subcomplex $K_I \in C_K$ (see Theorem ~\ref{thm:inPcupTiffullsubcompleteskel}). This result can also be used to prove a localised result, which gives conditions on the rational cohomology of each $K_I \in C_K$ in $K$, for which after localising away from a finite set of primes (controlled by these full subcomplexes), $\Omega \zk \in \prod\mathcal{P}$. A simplicial complex $K$ is called $k$-neighbourly if any $I \subseteq [m]$ with $|I| \leq k+1$ spans a simplex. 

\begin{theorem}[Theorem ~\ref{thm:localisedprodofspheres} in the text]
\label{thm:introlocalisedprodofspheres}
    Let $K$ be a simplicial complex such that all cup products and higher Massey products in $H^*(|K_I|;\mathbb{Q})$ are trivial, for all $K_I \in C_K$. For $K_I \in C_K$, suppose that $K_I$ is $k_I$-neighbourly, and let $a = \max_{K_I \in C_K}\{|I|+dim(K_I)-2k_I\}$. Localise away from primes $p \leq \frac{1}{2}a$ and primes $p$ appearing as $p$-torsion in $H_*(|K_I|;\mathbb{Z})$ for any $K_I \in C_K$. Then $\Omega \mathcal{Z}_K \in \prod\mathcal{P}$.
\end{theorem}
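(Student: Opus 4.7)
The plan is two-fold: apply the reduction Theorem~\ref{thm:inPcupTiffullsubcompleteskel} to pass to full subcomplexes $K_I\in C_K$, and then for each such $K_I$ use the rational hypotheses together with the prime bounds to show $\Omega\mathcal{Z}_{K_I}\in\prod\mathcal{P}$ after localisation, from which the conclusion for $\Omega\mathcal{Z}_K$ assembles. Observe first that $C_K$ is hereditary: if $K_I\in C_K$ and $J\subseteq I$, then $K_J$ has no missing edges and so also lies in $C_K$. Hence the hypotheses on rational cup and higher Massey products transfer to every $K_J$, $J\subseteq I$, and the invariant $a=|I|+\dim K_I-2k_I$ upper bounds the dimension-minus-connectivity gap of every summand in the Bahri--Bendersky--Cohen--Gitler stable decomposition $\Sigma\mathcal{Z}_{K_I}\simeq\bigvee_{J\subseteq I}\Sigma^{|J|+2}|K_{I,J}|$.

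Fix $K_I\in C_K$. The $k_I$-neighbourliness makes $|K_I|$ a $(k_I-1)$-connected CW complex of dimension $\dim K_I$. The vanishing of all rational cup and higher Massey products then implies that $|K_I|$ is rationally formal with trivial-product cohomology algebra, so $|K_I|_{\mathbb{Q}}$ is a wedge of rational spheres in degrees $k_I,\dots,\dim K_I$. At a prime $p$ that avoids the $p$-torsion of $H_*(|K_I|;\mathbb{Z})$ and satisfies $p>\tfrac{1}{2}a$, a cell-wise obstruction argument promotes this rational decomposition to a $p$-local equivalence $|K_I|_{(p)}\simeq\bigvee_\alpha S^{n_\alpha}_{(p)}$; the bound $p>\tfrac{1}{2}a$ places $\mathcal{Z}_{K_I}$ in the $p$-local metastable range, where both the Steenrod operations and the homotopy obstructions encountered in the BBCG decomposition are $p$-locally trivial. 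Applying the same lift to every $K_{I,J}$ with $J\subseteq I$, and then invoking the loop-space decomposition machinery developed earlier in the paper together with the Hilton--Milnor theorem on the resulting $p$-local wedge-of-spheres decompositions, yields $\Omega\mathcal{Z}_{K_I}\in\prod\mathcal{P}$ after localisation.

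The statement $\Omega\mathcal{Z}_K\in\prod\mathcal{P}$ then follows by assembling the pieces via the reduction mechanism of Theorem~\ref{thm:inPcupTiffullsubcompleteskel}, whose construction is compatible with $\prod\mathcal{P}$ factors at the $p$-local level. The main technical obstacle is the rational-to-$p$-local lift: verifying that the numerical bound $p>\tfrac{1}{2}a$ is exactly the threshold past which every obstruction encountered when promoting the rational wedge-of-spheres structure through the BBCG decomposition and the subsequent James/Hilton--Milnor splittings vanishes $p$-locally. A secondary point is checking that the reduction does not introduce spurious $\mathcal{T}$ factors when all inputs lie in $\prod\mathcal{P}$ after the stated localisation; this should follow from the fact that any $\mathcal{T}$-factor is by construction a $p$-torsion space at one of the inverted primes and so becomes contractible after localisation.
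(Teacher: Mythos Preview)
Your overall architecture matches the paper's: reduce via Theorem~\ref{thm:inPcupTiffullsubcompleteskel} to full subcomplexes $K_I\in C_K$, then show each $\Omega\mathcal{Z}_{K_I}\in\prod\mathcal{P}$ after the stated localisation, and finish with Hilton--Milnor. The divergence, and the gap, is in how you handle the middle step.

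You work at the level of the geometric realisations $|K_J|$: you argue that each $|K_J|$ is $p$-locally a wedge of spheres, then feed this into the BBCG splitting $\Sigma\mathcal{Z}_{K_I}\simeq\bigvee_{J\subseteq I}\Sigma^{|J|+2}|K_J|$ and appeal to unspecified ``loop-space decomposition machinery'' to conclude for $\Omega\mathcal{Z}_{K_I}$. The problem is that the BBCG splitting of Proposition~\ref{prop:suspensionsplitting} is only a \emph{suspension} splitting. Knowing that each $\Sigma^{|J|+2}|K_J|$ is $p$-locally a wedge of spheres tells you only that $\Sigma\mathcal{Z}_{K_I}$ is, not $\mathcal{Z}_{K_I}$ itself; desuspending this is exactly the hard problem, and the paper contains no machinery that produces a loop space decomposition of $\mathcal{Z}_{K_I}$ from the stable BBCG pieces in this generality. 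Your references to ``metastable range'' and ``obstruction arguments'' do not constitute a proof of this desuspension.

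The paper sidesteps this entirely. It never decomposes $\mathcal{Z}_{K_I}$ via BBCG. Instead it uses the rational Golodness hypothesis together with Proposition~\ref{prop:rationalgolodmeanssuspension} to conclude that $\mathcal{Z}_{K_I}$ is rationally a co-$H$ space, hence rationally a wedge of spheres \emph{before} any suspension. Then it applies Lemma~\ref{lem:plocallywedge} directly to $\mathcal{Z}_{K_I}$, using that $\mathcal{Z}_{K_I}$ itself is $(2k_I+2)$-connected of dimension $1+|I|+\dim K_I$; this is where the bound $p>\tfrac12(|I|+\dim K_I-2k_I)$ comes from, as the quantity $d-s+1$ for $\mathcal{Z}_{K_I}$ rather than for the BBCG summands. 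Once $\mathcal{Z}_{K_I}$ is $p$-locally a wedge of spheres, Hilton--Milnor gives $\Omega\mathcal{Z}_{K_I}\in\prod\mathcal{P}$ immediately. The key point you are missing is Proposition~\ref{prop:rationalgolodmeanssuspension}, which converts the cohomological hypothesis into an unstable rational co-$H$ structure on $\mathcal{Z}_{K_I}$ and makes Lemma~\ref{lem:plocallywedge} applicable without any desuspension.
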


Theorem ~\ref{thm:introlocalisedprodofspheres} has consequences for a question posed by McGibbon and Wilkerson. A space $X$ is called \textit{rationally elliptic} if it has finitely many rational homotopy groups. Otherwise, it is called \textit{rationally hyperbolic}. It was shown by McGibbon and Wilkerson \cite{MW} that if $X$ is rationally elliptic then at almost all primes $p$, the Steenrod algebra acts trivially on $H^*(\Omega X;\mathbb{Z}/p\mathbb{Z})$. They asked to what extent this holds for rationally hyperbolic spaces. We will show in Section ~\ref{subsec:localisedresult} that Theorem ~\ref{thm:introlocalisedprodofspheres} gives infinitely many examples for which this question has an affirmative answer. 

Theorem ~\ref{thm:2dimintro} and Theorem ~\ref{thm:introlocalisedprodofspheres} also verifies a conjecture of Anick \cite{A}. Anick conjectured that if $X$ is a finite, simply connected $CW$-complex, then at all but finitely many primes, $\Omega X \in \prod(\mathcal{P} \cup \mathcal{T})$. Theorem ~\ref{thm:2dimintro} and Theorem ~\ref{thm:introlocalisedprodofspheres} shows that such a decomposition holds for a family of moment-angle complexes.

We remark that the proofs in this paper hold more generally for polyhedral products of the form $\cxx^K$, where each $\Sigma X_i$ is homotopy equivalent to a finite type wedge of spheres and Moore spaces, however, we work only with moment-angle complexes to ease notation.

The author would like to thank Stephen Theriault for reading a draft of this work and providing many useful comments which helped to improve the paper. The author would also like to thank the anonymous referee for numerous helpful comments which improved the exposition of the paper.

\section{Preliminary results}
\subsection{Unique decomposition of \texorpdfstring{$H$}{H}-spaces and co-\texorpdfstring{$H$}{H} spaces}

In this subsection, we show a cancellation result after localisation that will be required to show that $\prod(\mathcal{P} \cup \mathcal{T})$ is closed under retracts. We will also require analogous results for wedges of spaces. We first state a result of \cite[Theorem 1.1]{HW} showing that after localising at a prime $p$, there is a unique decomposition of $H$-spaces and co-$H$ spaces into indecomposable spaces.

\begin{proposition}
\label{prop:uniquedecomp} The following hold:
\begin{enumerate}
    \item Let $X$ be a connected, finite type, $p$-local $H$-space. Then $X$ can be uniquely decomposed into a weak product of indecomposable factors up to order and homotopy equivalence.
    \item Let $X$ be a connected, finite type, $p$-local co-$H$-space. Then $X$ can be uniquely decomposed into a finite type wedge of indecomposable factors up to order and homotopy equivalence.\qed
\end{enumerate} 
\end{proposition}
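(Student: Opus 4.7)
The statement is essentially a direct restatement of \cite[Theorem 1.1]{HW}, and so the ``proof'' I would give in the paper is a citation. Nevertheless, if one wished to reconstruct the argument from scratch, the natural route is via a Krull--Schmidt theorem for the appropriate pointed homotopy category after $p$-localisation.

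The plan for part (1) is as follows. First I would establish existence: given a connected finite type $p$-local $H$-space $X$, any homotopy idempotent $e\colon X \to X$ splits $X$ as a product $X \simeq \mathrm{Tel}(e)\times \mathrm{Tel}(1-e)$, using the mapping telescope together with the $H$-space multiplication to combine the two factors. Iterating on each successive factor, and using that the mod $p$ cohomology is finite in each degree, one obtains a decomposition of $X$ into indecomposable $H$-spaces in finitely many steps (induction on the Poincar\'e series truncated degree by degree ensures termination).

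The main obstacle, and the whole point of the theorem, is uniqueness. For this the key input is that if $X$ is an indecomposable finite type $p$-local $H$-space, then the endomorphism monoid $[X,X]$ under composition is local, meaning that the non-equivalences form a two-sided ideal. Granted this, a standard Krull--Schmidt style argument shows that any two decompositions into indecomposables agree up to permutation and homotopy equivalence of the factors: given $\prod_i X_i \simeq \prod_j Y_j$, one composes the inclusion of a summand $X_{i_0}$ with the projections onto the $Y_j$, and the locality of $[X_{i_0},X_{i_0}]$ forces at least one composite to be an equivalence, allowing cancellation and induction. The locality of $[X,X]$ is where the finiteness and $p$-local hypotheses are used: nilpotence of self-maps inducing zero on mod $p$ cohomology, combined with indecomposability precluding nontrivial idempotents, yields the required local structure.

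For part (2), the argument is formally dual. Co-$H$-spaces admit cotelescope splittings via homotopy idempotents, wedges replace products throughout, and the Krull--Schmidt theorem for indecomposable $p$-local co-$H$-spaces follows from the dual locality of endomorphism monoids of indecomposable co-$H$-spaces. Since the detailed verification of both locality statements is precisely the content of \cite{HW}, in the write-up I would simply quote that reference rather than reproduce the argument.
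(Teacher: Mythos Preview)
Your proposal is correct and matches the paper exactly: the paper gives no proof at all, simply citing \cite[Theorem 1.1]{HW} and placing a \qed\ at the end of the statement. Your additional Krull--Schmidt sketch is accurate but goes beyond what the paper does.
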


We now apply Proposition ~\ref{prop:uniquedecomp} to show a cancellation result after localising at a prime $p$. Let $X$ and $Y$ be spaces. We say that $X$ \textit{retracts off} $Y$ if there exist maps $f:X \rightarrow Y$ and $g:Y \rightarrow X$ such that $g \circ f$ is homotopic to the identity on $X$.

\begin{proposition}
    \label{prop:cancellation}
    The following hold:
    \begin{enumerate}
        \item Let $X$ be a connected, finite type $H$-space, which is $p$-locally homotopy equivalent to a product \[X \simeq \prod\limits_{i \in \mathcal{I}} X_i,\] where each $X_i$ is indecomposable. If $A$ is a space which retracts off $X$, then there is a $p$-local homotopy equivalence \[A \simeq \prod\limits_{j \in \mathcal{J}} X_{j}\] where $\mathcal{J} \subseteq \mathcal{I}$.
        \item Let $X$ be a connected, finite type co-$H$-space, which is $p$-locally homotopy equivalent to a wedge \[X \simeq \bigvee\limits_{i \in \mathcal{I}} X_i,\] where each $X_i$ is indecomposable. If $A$ is a space which retracts off $X$, then there is a $p$-local homotopy equivalence \[A \simeq \bigvee\limits_{j \in \mathcal{J}} X_{j}\] where $\mathcal{J} \subseteq \mathcal{I}$
    \end{enumerate}
\end{proposition}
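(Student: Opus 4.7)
Proof proposal. The two parts of the proposition are dual, swapping products with wedges and $H$-spaces with co-$H$-spaces, so I focus on part (1); part (2) is analogous. The plan is to transfer the $H$-space structure from $X$ to $A$, apply the uniqueness statement Proposition \ref{prop:uniquedecomp}(1) to decompose $A$ into indecomposables, and then match those indecomposable factors with a subset of $\{X_i\}$ by splitting $X$ as a product with $A$ as one factor and invoking uniqueness once more for $X$.

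First, I would equip $A$ with an induced $H$-space structure. Given $f : A \to X$ and $g : X \to A$ with $g \circ f \simeq \mathrm{id}_A$, define
\[
\mu_A := g \circ \mu_X \circ (f \times f) : A \times A \to A.
\]
The unit axiom for $\mu_A$ follows from the unit axiom for $\mu_X$, the pointedness of $f$ and $g$, and $g \circ f \simeq \mathrm{id}_A$. Since connectedness, finite type, and $p$-locality are all preserved by retracts, $A$ is a connected, $p$-local, finite-type $H$-space, and Proposition \ref{prop:uniquedecomp}(1) gives a unique decomposition $A \simeq \prod_k A_k$ with each $A_k$ indecomposable.

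Next, I would show that $X$ itself splits as $X \simeq A \times C$ for some $H$-space $C$. Take $C$ to be the homotopy fibre of $g$ with fibre inclusion $i : C \to X$, and define
\[
\Phi : A \times C \to X, \qquad \Phi(a, c) = \mu_X(f(a), i(c)).
\]
In the Serre spectral sequence of the fibration $C \to X \xrightarrow{g} A$, the section $f$ trivialises the action of $\pi_1(A)$ on $H_*(C)$ and splits the edge homomorphism $H_*(X) \to H_*(A)$; a standard argument then identifies $H_*(X)$ with $H_*(A) \otimes H_*(C)$ via $\Phi_*$, so $\Phi$ is a homology isomorphism, hence a homotopy equivalence between simply connected, finite-type spaces. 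Consequently $C$ inherits an $H$-space structure, and Proposition \ref{prop:uniquedecomp}(1) yields $C \simeq \prod_l C_l$ with each $C_l$ indecomposable. Combining, $X \simeq \left(\prod_k A_k\right) \times \left(\prod_l C_l\right)$ is a decomposition of $X$ into indecomposables which, by the uniqueness clause of Proposition \ref{prop:uniquedecomp}(1) applied to $X$, must agree with $\prod_{i \in \mathcal{I}} X_i$ up to reordering and homotopy equivalence of factors. Therefore $\{A_k\}$ is a sub-multiset of $\{X_i\}$, giving the required subset $\mathcal{J} \subseteq \mathcal{I}$ with $A \simeq \prod_{j \in \mathcal{J}} X_j$.

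The main obstacle is the splitting step producing $X \simeq A \times C$: since $g$ is not assumed to be an $H$-map, the composite $g \circ \Phi$ cannot be identified with the projection onto $A$ by multiplicativity alone, and the argument must go through the Serre spectral sequence, using only that $f$ provides a homological section and a compatible lift of the basepoint over every point of $A$. For part (2), the dual argument uses the homotopy cofibre of $f$ in place of the homotopy fibre of $g$, the Mayer--Vietoris sequence of the cofibration $A \xrightarrow{f} X \to C$ in place of the Serre spectral sequence, and the co-$H$-comultiplication on $X$ to construct an analogous equivalence $X \simeq A \vee C$ from which the matching of wedge summands proceeds as before.
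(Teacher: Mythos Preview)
Your approach is essentially the paper's: endow $A$ with an $H$-structure as a retract of $X$, split $X \simeq A \times F$ with $F$ the homotopy fibre of $g$, decompose both pieces into indecomposables via Proposition~\ref{prop:uniquedecomp}, and match against the $X_i$ by uniqueness. One small slip: the hypotheses only give \emph{connected}, not simply connected, so your homology-isomorphism-implies-equivalence step is not justified as stated; the cleaner fix is to replace the Serre spectral sequence argument by the direct check that $\Phi$ is a weak equivalence on homotopy groups, using that the section $f$ splits the long exact sequence of the fibration and that the $H$-multiplication realises the resulting direct sum.
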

\begin{proof}
    We prove part $(1)$, and part $(2)$ follows by arguing dually. Localise at a prime $p$. Since $A$ retracts off $X$, there exists a map $g:X \rightarrow A$ which has a right homotopy inverse.  Proposition ~\ref{prop:uniquedecomp} implies that there is a unique $p$-local decomposition $A_i \simeq \prod_{k \in \mathcal{K}} A_k$, where each $A_k$ is indecomposable. Since $g$ has a right homotopy inverse and $X$ is an $H$-space, there is a $p$-local homotopy equivalence $X \simeq A \times F$, where $F$ is the homotopy fibre of $g$. The space $F$ retracts off $X$, and so $F$ is an $H$-space, implying by Proposition ~\ref{prop:uniquedecomp} that there is a $p$-local homotopy equivalence $F \simeq \prod_{k' \in \mathcal{K}'} F_{k'}$, where each $F_{k'}$ is indecomposable. Hence, \[\prod\limits_{i \in \mathcal{I}} X_i \simeq X \simeq A \times F \simeq  \prod_{k \in \mathcal{K}} A_k \times \prod_{k' \in \mathcal{K}'} F_{k'}.\] Since the product decomposition of $X$ is unique, there exists an indexing set $\mathcal{J} \subseteq \mathcal{I}$ such that $A \simeq \prod\limits_{j \in \mathcal{J}} X_{j}$.
\end{proof}

\subsection{Rational and \texorpdfstring{$p$}{p}-local decompositions of moment-angle complexes}

In this subsection, we state some preliminary localised decompositions of spaces. The first states conditions under which a finite $CW$-complex decomposes as a wedge of spheres after localising away from sufficiently many primes. This result is a mild generalisation of a result proved in \cite[Lemma 5.1]{HT}, however, the proof goes through unchanged.

\begin{lemma}
\label{lem:plocallywedge}
    Let $X$ be a simply-connected, finite $CW$-complex of dimension $d$ and connectivity $s$. Suppose that $X$ is rationally homotopy equivalent to a wedge of spheres. Let $p$ be a prime such that $p > \frac{1}{2}(d-s+1)$, and $H_*(X;\mathbb{Z})$ is $p$-torsion free. Then $X$ is $p$-locally homotopy equivalent to a wedge of spheres. \qedno
\end{lemma}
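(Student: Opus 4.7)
Localize at $p$ throughout. Since $H_*(X;\mathbb{Z})$ is $p$-torsion free, $\tilde{H}_*(X;\mathbb{Z}_{(p)})$ is a finitely generated free $\mathbb{Z}_{(p)}$-module concentrated in degrees $n$ with $s+1 \leq n \leq d$. Choose a homogeneous basis $\{x_i\}$ with $|x_i| = n_i$ and form the candidate wedge $W = \bigvee_i S^{n_i}$, which has the same $\mathbb{Z}_{(p)}$-homology as $X$ and the same rational homotopy type. The plan is to construct a map $f \colon W \to X$ inducing an isomorphism on $\mathbb{Z}_{(p)}$-homology; by Whitehead's theorem, any such $f$ is automatically a $p$-local homotopy equivalence.

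I would build $f$ sphere-by-sphere: for each basis element $x_i$, realize it by a $p$-local homotopy class $f_i \colon S^{n_i} \to X$, then wedge to obtain $f$. For the lowest-dimensional generators ($n_i = s+1$) this is immediate from the classical Hurewicz isomorphism, since $X$ is $s$-connected. For higher $n_i$, the rational assumption produces an integral map $\phi_i \colon S^{n_i} \to X$ together with a nonzero integer $N_i$ such that $(\phi_i)_*(\iota_{n_i}) = N_i \cdot x_i$ in integral homology; if $N_i$ can be chosen coprime to $p$, then rescaling $\phi_i$ inside $\pi_{n_i}(X)_{(p)}$ produces the desired $f_i$. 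Equivalently, one needs each $x_i$ to lie in the image of the $p$-local Hurewicz map $\pi_{n_i}(X)_{(p)} \to H_{n_i}(X;\mathbb{Z}_{(p)})$.

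The main obstacle, and the only place where the hypothesis $p > \tfrac{1}{2}(d-s+1)$ enters, is establishing this Hurewicz surjectivity. I would proceed by induction on $n_i$: given lifts $f_j$ for all $n_j < n_i$, take the homotopy cofiber $C$ of the partial wedge map $\bigvee_{n_j < n_i} S^{n_j} \to X$. A routine long-exact-sequence computation shows that $C$ is $(n_i - 1)$-connected with $H_{n_i}(C;\mathbb{Z}_{(p)}) \cong H_{n_i}(X;\mathbb{Z}_{(p)})$, so $x_i$ lifts to $\pi_{n_i}(C)_{(p)}$ by classical Hurewicz. Transferring the lift back to $\pi_{n_i}(X)_{(p)}$ requires controlling the homotopy fiber of $X \to C$ via the mod-$\mathcal{C}$ Hurewicz theorem, applied to the Serre class $\mathcal{C}$ of finite abelian groups of order prime to $p$. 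The relevant obstructions live in homotopy of spaces of connectivity at least $s$ and dimension at most $d$, and the inequality $p > \tfrac{1}{2}(d-s+1)$ places $p$ outside the range of primes that can contribute to these groups; hence every lift exists $p$-locally.

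Once all $f_i$ are constructed, $f = \bigvee_i f_i \colon W \to X$ induces the identity on a chosen basis of $\tilde{H}_*(X;\mathbb{Z}_{(p)})$ and is therefore a $\mathbb{Z}_{(p)}$-homology isomorphism, hence a $p$-local equivalence by Whitehead. The technical estimate underlying the Serre-class argument is precisely the content of \cite[Lemma 5.1]{HT}, and since its inputs are only the connectivity, the dimension, and the arithmetic inequality on $p$, the proof there applies unchanged to the slightly more general setting considered here.
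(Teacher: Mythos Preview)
The paper does not actually prove this lemma; it simply cites \cite[Lemma 5.1]{HT} and remarks that the proof there goes through unchanged in the present, slightly more general, setting. Your proposal ultimately defers to the same reference, and the sketch you supply of the underlying argument---building a map from a wedge of spheres hitting a homology basis, with the rational hypothesis forcing the obstructions to be torsion and the bound $p>\tfrac{1}{2}(d-s+1)$ forcing them to have no $p$-primary part---is correct and is indeed the shape of the argument in \cite{HT}, so the two are aligned.
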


The next result relates to a rational decomposition for certain moment-angle complexes. Let $K$ be a simplicial complex. The simplicial complex $K$ is said to be \textit{Golod over a ring} $R$ if all products and higher Massey products in $H^*(\zk;R)$ are trivial. In this case, there is a relation between the rational Golodness of $K$ and $\zk$ being a suspension. The following result is attributed to Berglund, however, the reference now appears to be unavaliable. Therefore, we provide an alternative proof.

\begin{proposition}
    \label{prop:rationalgolodmeanssuspension}
    Let $K$ be a simplicial complex. Then $K$ is rationally Golod if and only if $\zk$ is rationally a co-$H$ space.
\end{proposition}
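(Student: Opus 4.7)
The plan is to reduce both directions to the intermediate condition that $\zk$ is rationally equivalent to a wedge of spheres. The main input from rational homotopy theory is that for a simply connected, finite type CW-complex $X$, being a rational co-$H$ space is equivalent to being rationally equivalent to a wedge of spheres. One direction is trivial; the other uses that a rational co-$H$ space has a free rational homotopy Lie algebra, which is dual to a wedge of spheres via the Milnor--Moore theorem. Note that $\zk$ is simply connected for any simplicial complex $K$, so this rational homotopy input applies.

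For the implication $(\Leftarrow)$, if $\zk$ is rationally a co-$H$ space then it is rationally equivalent to a wedge of spheres $\bigvee_\alpha S^{n_\alpha}$. The rational cohomology of such a wedge admits the CDGA $(H^*(\bigvee_\alpha S^{n_\alpha};\mathbb{Q}),0)$, with trivial multiplication and zero differential, as a Sullivan model. In this model every cup product and every higher Massey product is identically zero with zero indeterminacy, so $K$ is rationally Golod.

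For the implication $(\Rightarrow)$, suppose $K$ is rationally Golod, so that all cup products and all higher Massey products in $H^*(\zk;\mathbb{Q})$ vanish. The vanishing of all Massey products of all orders is precisely the obstruction-theoretic condition ensuring that $\zk$ is rationally formal with $(H^*(\zk;\mathbb{Q}),0)$ as a Sullivan model. Since the cup products also vanish, this model reduces to a graded vector space with zero differential and zero multiplication, which is the rational model of a wedge of spheres --- one $S^n$ for each basis element in degree $n$ of $H^*(\zk;\mathbb{Q})$. Hence $\zk$ is rationally equivalent to a wedge of spheres, and in particular a rational co-$H$ space.

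The main obstacle is the forward direction, where one must pass from triviality of all Massey products to rational formality with the trivial CDGA as model. This is a standard but technical fact from rational homotopy theory; alternatively one could construct a rational equivalence $\bigvee_\alpha S^{n_\alpha} \to \zk$ inductively along a minimal Sullivan model of $\zk$, using the vanishing Massey products to kill the higher-order obstructions to trivialising the differential on generators at each stage. A careful write-up would also verify the rational homotopy characterisation of co-$H$ spaces invoked at the outset, either by citing the literature or by using Ganea's result that a co-$H$ space is a retract of a suspension together with the fact that rational retracts of wedges of spheres split as wedges of spheres.
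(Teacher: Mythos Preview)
Your backward direction agrees with the paper's: a rational co-$H$ space is rationally a wedge of spheres, hence all products and Massey products vanish.

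For the forward direction, however, your route diverges substantially from the paper's, and the step you flag as ``standard but technical'' is precisely where the difficulty lies. The paper does \emph{not} argue via rational formality at all. Instead it invokes two external results: Katth\"an's theorem that rational Golodness implies Golodness over $\mathbb{Z}/p\mathbb{Z}$ for all sufficiently large primes $p$, and the Beben--Grbi\'c theorem that, localised at a large prime, $\zk$ is a co-$H$ space if and only if $K$ is Golod over $\mathbb{Z}/p\mathbb{Z}$. Chaining these gives that $\zk$ is $p$-locally a co-$H$ space for all large $p$, hence rationally a co-$H$ space. This argument is short because the hard work is outsourced.

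Your proposed route is more self-contained but the assertion that ``vanishing of all Massey products of all orders is precisely the obstruction-theoretic condition ensuring rational formality'' is not a standard fact and is false in general: there are non-formal spaces in which every Massey product set contains zero. What makes the situation here more tractable is that the cup products also vanish, so higher Massey products have zero indeterminacy and are genuinely well-defined elements rather than cosets; one can then attempt to build the minimal Sullivan model inductively and use the vanishing of the $n$-fold Massey products to show the differential on each new generator can be taken to be zero. But this inductive argument is the actual content of the proposition and cannot be waved through as ``standard''; it is essentially a rational-homotopy reformulation of Golod's original theorem relating trivial Massey operations to the Poincar\'e series equality. If you want to pursue this line you should either supply that inductive argument in full or cite a precise source for it. Otherwise the paper's characteristic-$p$ detour, while less direct, is the safer path.
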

\begin{proof}
    Rationally, any co-$H$ space is homotopy equivalent to a wedge of spheres. Therefore, if $\zk$ is rationally a co-$H$ space then $K$ is rationally Golod. Suppose $K$ is rationally Golod. Then by \cite[Proposition 3.6]{K}, $K$ is Golod over $\mathbb{Z}/p\mathbb{Z}$, when $p$ is a sufficiently large prime. Moreover, by \cite[Theorem 3.1]{BG}, localised at a sufficiently large prime $p$, $\zk$ is a co-$H$ space if and only if $\zk$ is Golod over $\mathbb{Z}/p\mathbb{Z}$. Therefore, $\zk$ is rationally a co-$H$ space.
\end{proof}

A counterexample to the integral analogue of Proposition ~\ref{prop:rationalgolodmeanssuspension} was constructed in \cite{IY}. Since any co-$H$ space is rationally homotopy equivalent to a wedge of spheres, a rational decomposition of $\zk$ in this case can be recovered from its homology. For any moment-angle complex, a suspension splitting was proved in \cite[Theorem 2.21]{BBCG1}.

\begin{proposition}
    \label{prop:suspensionsplitting}
    Let $K$ be a simplicial complex. There is a homotopy equivalence \[\Sigma \zk \simeq \bigvee\limits_{I \notin K} \Sigma^{2+|I|} |K_I|.\eqno\qed\]
\end{proposition}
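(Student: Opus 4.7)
The plan is to deduce this from the general Bahri--Bendersky--Cohen--Gitler suspension splitting for polyhedral products. In its general form, that theorem provides, for any polyhedral product $\uxa^K$, a wedge decomposition of $\Sigma \uxa^K$ indexed by non-empty subsets $I \subseteq [m]$, whose summands have the form $\Sigma(|K_I| \wedge \widehat{Y}^I)$ for certain smash products $\widehat{Y}^I$ built from the pairs $(X_i,A_i)_{i \in I}$.

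First I would apply this theorem with $(X_i,A_i) = (D^2,S^1)$. Using the cofibre sequence $S^1 \hookrightarrow D^2 \to S^2$ and the contractibility of $D^2$, each smash factor $\widehat{Y}^I$ collapses to a sphere; a straightforward dimension count (cross-checked against a small test case such as $K$ consisting of two disjoint vertices, for which $\zk \simeq S^3$) identifies the $I$th summand as $\Sigma^{|I|+2}|K_I|$.

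Finally, I would discard the null-homotopic summands. For any simplex $I \in K$, the full subcomplex $K_I$ contains the complete simplex on the vertex set $I$, so $|K_I|$ is contractible and the corresponding summand $\Sigma^{|I|+2}|K_I|$ is null-homotopic. Removing these trivial contributions reduces the index set to the non-simplices $I \notin K$, yielding the stated splitting. The only step requiring genuine care is the correct identification of the smash factor in the moment-angle specialization, which is a routine consequence of the contractibility of $D^2$; everything else is formal once the BBCG machinery has been invoked.
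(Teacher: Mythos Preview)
Your proposal is correct and matches the paper's approach: the paper does not give an independent argument but simply cites \cite[Theorem~2.21]{BBCG1}, the Bahri--Bendersky--Cohen--Gitler suspension splitting, and records the specialization to $(D^2,S^1)$ as the stated proposition. Your outline of how that specialization goes (identifying the smash factors as spheres and discarding the contractible summands indexed by simplices) is exactly the standard unpacking of that citation.
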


Finally, we require a result which relates the rational homotopy type and the $p$-local homotopy type of an $H$-space. This result is known as the Sullivan arithmetic square (see \cite[Theorem 8.1.3]{MP} for a modern presentation). For a prime $p$, denote by $X_{(p)}$ the localisation of $X$ at $p$, and let $X_{\mathbb{Q}}$ denote the rationalisation of $X$.

\begin{theorem}
    \label{thm:Sullivansquare}
    Let $X$ be an $H$-space. Then there is a homotopy pullback \[\begin{tikzcd}
	X & {\prod\limits_{p}X_{(p)}} \\
	{X_{\mathbb{Q}}} & {\prod\limits_{p} X_{\mathbb{Q}}.}
	\arrow[from=1-1, to=1-2]
	\arrow[from=1-1, to=2-1]
	\arrow[from=1-2, to=2-2]
	\arrow[from=2-1, to=2-2]
\end{tikzcd}\] In particular, if $X$ is rationally trivial, there is a homotopy equivalence \[X \simeq \prod\limits_{p} X_{(p)}.\eqno\qed\]
\end{theorem}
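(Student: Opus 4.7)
The plan is to treat this as the Sullivan fracture theorem for $H$-spaces, following the standard two-step strategy: establish the homotopy pullback via an algebraic arithmetic square on homotopy groups, then deduce the ``in particular'' statement by collapsing the contractible corners. Because $X$ is an $H$-space, each $\pi_n(X)$ is abelian and $X$ is nilpotent, so both the rationalisation $(-)_{\mathbb{Q}}$ and the $p$-localisation $(-)_{(p)}$ are well defined and induce the usual algebraic localisations on homotopy groups.

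First, let $P$ denote the homotopy pullback of $X_{\mathbb{Q}} \to \prod_{p} X_{\mathbb{Q}} \leftarrow \prod_{p} X_{(p)}$, where the right-hand arrow is the product of rationalisation maps. The localisation maps assemble into a natural comparison map $X \to P$, which is a weak equivalence exactly when it is an isomorphism on every homotopy group. To verify this, I would appeal to the algebraic arithmetic square for finitely generated nilpotent modules: for such a module $A$, the commutative square with vertices $A$, $\prod_{p} A_{(p)}$, $A \otimes \mathbb{Q}$ and $\bigl(\prod_{p} A_{(p)}\bigr) \otimes \mathbb{Q}$ is a pullback of abelian groups. Comparing the Mayer--Vietoris-type long exact sequence of the homotopy pullback $P$ with this algebraic pullback and invoking the five-lemma at each $\pi_n$ yields $\pi_n(X) \xrightarrow{\cong} \pi_n(P)$, and hence $X \simeq P$.

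For the corollary, if $X_{\mathbb{Q}}$ is contractible then so is $\prod_{p} X_{\mathbb{Q}}$, and the homotopy pullback of $\ast \to \ast \leftarrow \prod_{p} X_{(p)}$ is homotopy equivalent to $\prod_{p} X_{(p)}$, yielding the desired equivalence $X \simeq \prod_{p} X_{(p)}$.

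The main obstacle is the rigorous verification of the algebraic pullback and its compatibility with the topological construction, particularly the handling of infinite products and the inverse-limit behaviour of rationalisation (the comparison of $\prod_{p} X_{(p)}$ with $\prod_{p} X_{\mathbb{Q}}$ requires some care because rationalisation does not commute with arbitrary infinite products on the nose). The $H$-space (nilpotence) hypothesis is essential here: it is precisely what makes localisation commute with the relevant limits and guarantees that the topological diagram assembles into a genuine homotopy pullback rather than merely a commutative square. Since the full argument is standard and carried out in detail in \cite[Theorem 8.1.3]{MP}, it suffices in this paper to cite the result.
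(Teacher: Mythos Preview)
Your proposal is correct and aligns with the paper's treatment: the paper does not prove this theorem at all but simply cites it as \cite[Theorem~8.1.3]{MP}, which is exactly what you conclude after your sketch. Your outline of the standard fracture-square argument is accurate and more detailed than anything the paper provides, but since the paper treats this as a black-box citation, there is nothing further to compare.
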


\section{Closure of \texorpdfstring{$\bigvee(\mathcal{W} \cup \mathcal{M})$}{W cup M} under retracts}
\label{sec:closureofWcupM}

To prove Theorem ~\ref{thm:2dimintro}, we will need to consider wedge decompositions of certain spaces. For a collection of topological spaces $\mathcal{X}$, let $\bigvee \mathcal{X}$ denote the collection of spaces homotopy equivalent to a finite type wedge of spaces in $\mathcal{X}$. Let $\mathcal{W}$ be the collection of simply connected spheres. By Proposition ~\ref{prop:uniquedecomp}, when localised at a prime, there is a unique decomposition of any co-$H$ space, up to homotopy, as a finite type wedge of indecomposable spaces. Let $\mathcal{M}$ be the collection of Moore spaces of the form $P^{n}(p^r)$, where $n \geq 3$, $p$ is a prime, and $r \geq 1$, and the indecomposable factors which appear as wedge summands in the unique $2$-local wedge decomposition of spaces of the form $\Sigma ((P^{n_1}(2) \wedge \cdots \wedge (P^{n_l}(2))$, where $l \geq 2$, and each $n_i \geq 3$. Note that we do not require smash products of Moore spaces of the form $P^n(p^r)$ when $p^r \neq 2$, since in this case by \cite[Corollary 6.6]{N3}, there is a homotopy equivalence \[P^n(p^{r_1}) \wedge P^m(p^{r_2}) \simeq P^{n+m}(p^{\min\{r_1,r_2\}}) \vee P^{n+m-1}(p^{\min\{r_1,r_2\}})\] when $p^{r_1},p^{r_2} \neq 2$. In general, the indecomposable wedge summands that appear in the decomposition of spaces of the form  $\Sigma ((P^{n_1}(2) \wedge \cdots \wedge (P^{n_l}(2)))$ are unknown, but some progress has been made in \cite{W}. In this section, we will show that $\bigvee(\mathcal{W} \cup \mathcal{M})$ is closed under retracts. This is well known for spaces in $\bigvee\mathcal{W}$, and it was shown for a wedge of Moore spaces of a fixed odd prime power in \cite[Lemma 4.2]{N2}. The introduction of the $2$-torsion spaces necessitates a more technical argument which we complete here.

Let $X \in \bigvee(\mathcal{W} \cup \mathcal{M})$, and let $A$ be a space which retracts off $X$. The strategy to show that $A \in \bigvee(\mathcal{W} \cup \mathcal{M})$ is to retract a sphere off $A$ for every $\mathbb{Z}$ summand which appears in the homology of $A$. This will give us a homotopy equivalence $A \simeq W \vee A'$, where $W \in \bigvee\mathcal{W}$, and the homology of $A'$ is torsion. We can then use Theorem ~\ref{thm:Sullivansquare} and Proposition ~\ref{prop:cancellation} to show that $A' \in \bigvee\mathcal{M}$. 

To retract spheres off $A$, we argue similarly to \cite[Section 3]{S}. Since $A$ retracts off $X$, there exists maps $f:A \rightarrow X$ and $g:X \rightarrow A$ such that $g \circ f \simeq id_A$. Define $\phi:X \rightarrow X$ as the composite \[\phi: X \xrightarrow{g} A \xrightarrow{f} X.\] Note that $\phi$ is an idempotent. Let $W$ be the wedge of spheres that appear in the wedge decomposition of $X$. Define $\phi'$ to be the composite \[\phi': W \hookrightarrow X \xrightarrow{\phi} X \xrightarrow{p} W,\] where the lefthand map is the inclusion, and $p$ is the pinch map. While $\phi'$ may not be an idempotent, the following shows that the induced map $(\phi')_*$ is an idempotent on homology, which suffices for our purposes. This follows from the following technical lemma.

\begin{lemma}
\label{lem:idempotentmapfreepart}
Let $G$ be a finitely generated abelian group, $G_{free}$ be the free part of $G$, and $G_{tor}$ be the torsion part of $G$. Let $\phi:G \rightarrow G$ be an idempotent. Then the composite \[\phi':G_{free} \xrightarrow{i} G \xrightarrow{\phi} G \xrightarrow{\pi} G_{free},\] where $i$ is the inclusion and $\pi$ is the projection, is an idempotent. Moreover, if $\phi(g,t) = (g',t')$ where $g,g' \in G_{free}$ and $t,t' \in G_{tor}$, then $g' \in \text{Im}(\phi')$.
\end{lemma}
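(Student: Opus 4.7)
The plan is to exploit the fact that any homomorphism between finitely generated abelian groups must send torsion to torsion, so $\phi(G_{tor}) \subseteq G_{tor}$. Fix the decomposition $G = G_{free} \oplus G_{tor}$ and write an element of $G$ as a pair $(g,t)$. Define $\alpha := \pi \circ \phi : G \to G_{free}$ and $\beta := (\mathrm{id}-\pi) \circ \phi : G \to G_{tor}$, so $\phi(g,t) = (\alpha(g,t),\beta(g,t))$. The torsion-preservation observation says $\alpha(0,t)=0$ for every $t \in G_{tor}$, and since $\alpha$ is a homomorphism, $\alpha(g,t) = \alpha(g,0)$ for all $(g,t)$. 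In other words, $\alpha$ factors through the projection $\pi$.

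With this observation in hand the two claims are immediate. First, by definition $\phi'(g) = \pi\phi(i(g)) = \alpha(g,0)$. To check idempotency, compute
\[
\phi'(\phi'(g)) = \alpha(\phi'(g),0) = \alpha(\alpha(g,0),0).
\]
On the other hand, applying $\phi \circ \phi = \phi$ to $(g,0)$ and taking the first coordinate gives
\[
\alpha(g,0) = \alpha(\phi(g,0)) = \alpha(\alpha(g,0),\beta(g,0)),
\]
and by the factoring observation the right-hand side equals $\alpha(\alpha(g,0),0) = \phi'(\phi'(g))$. Hence $\phi' \circ \phi' = \phi'$.

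For the ``moreover'' clause, given $(g,t) \in G$ with $\phi(g,t) = (g',t')$, the first coordinate is $g' = \alpha(g,t)$, which by the same factoring identity equals $\alpha(g,0) = \phi'(g)$; so $g' \in \mathrm{Im}(\phi')$, as required.

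There is no serious obstacle here: the whole argument hinges on the elementary remark that homomorphisms of finitely generated abelian groups preserve torsion, which forces $\alpha$ to be independent of its $G_{tor}$ coordinate. Once that is noted, idempotency of $\phi'$ and the image statement fall out by reading off coordinates in $\phi \circ \phi = \phi$.
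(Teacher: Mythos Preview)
Your proof is correct. The paper's proof takes a slightly different packaging of the same idea: rather than working in coordinates, it tensors everything with $\mathbb{Q}$, observes that $G_{tor}\otimes\mathbb{Q}=0$ so that $i\otimes\mathbb{Q}$ and $\pi\otimes\mathbb{Q}$ become identity maps, and concludes that $\phi'\otimes\mathbb{Q}=\phi\otimes\mathbb{Q}$ is idempotent; since $\phi'$ is an endomorphism of a free abelian group, idempotency over $\mathbb{Q}$ forces idempotency over $\mathbb{Z}$. Both arguments rest on the same elementary fact (torsion maps to torsion, so the free component of $\phi$ ignores the torsion coordinate), but your direct coordinate computation is more explicit and avoids the small extra step of descending from $\mathbb{Q}$ back to $\mathbb{Z}$, while the paper's rationalization trick is terser and generalizes more readily to the ``similar argument'' used for the second clause and elsewhere in the paper.
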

\begin{proof}
 Consider $\phi' \otimes \mathbb{Q}$. Since $G_{free}$ is free, the maps $i \otimes \mathbb{Q}$ and $\pi \otimes \mathbb{Q}$ are both the identity map. By assumption, $\phi$ is idempotent implying that $\phi \otimes \mathbb{Q}$ is idempotent and so $\phi'\otimes \mathbb{Q}$ is idempotent. Hence, $\phi'$ is idempotent. A similar argument shows that the second part is true.
\end{proof}

The inclusion $W \rightarrow X$ and the pinch map $X \xrightarrow{p} W$ induce the inclusion and projection respectively of the free part of $H_*(X)$, and so by Lemma ~\ref{lem:idempotentmapfreepart}, $(\phi')_*$ is an idempotent on homology. Let $a \in H_n(A)$ be a generator of a $\mathbb{Z}$ summand. We aim to show that $S^n$ retracts off $A$. First, we show that $(\phi'_*)_n:H_n(W) \rightarrow H_n(W)$ is non-zero.

\begin{lemma}
    \label{lem:wedgenonzeromap}
    Let $a \in H_n(A)$ be a generator of a $\mathbb{Z}$ summand. Then the induced map $(\phi'_*)_n$ in homology is non-zero.
\end{lemma}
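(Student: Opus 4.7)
The plan is to produce an element of $H_n(W)$ that witnesses the non-vanishing of $(\phi'_*)_n$. Since $g \circ f \simeq \mathrm{id}_A$, the homomorphism $f_*$ is a right inverse of $g_*$, the map $\phi_* = f_* \circ g_*$ is an idempotent on $H_*(X)$, and $g_*(f_*(a)) = a$. I will use the wedge splitting $X \simeq W \vee M$, where $M$ is the wedge of the Moore-type indecomposables appearing in the decomposition of $X$, to split $H_n(X) \cong H_n(W) \oplus H_n(M)$.

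The first key observation I would establish is that every indecomposable space in $\mathcal{M}$ has torsion reduced homology: for $P^n(p^r)$ this is immediate from the cofibre sequence, and for the indecomposable summands of $\Sigma(P^{n_1}(2) \wedge \cdots \wedge P^{n_l}(2))$ it follows from the K\"unneth formula, since both the tensor and $\mathrm{Tor}$ terms involve only $\mathbb{Z}/2$-modules. Consequently $H_n(M)$ is entirely torsion while $H_n(W)$ is free. Writing $f_*(a) = (w, m)$ with respect to this splitting, I would then show $w \neq 0$: otherwise $f_*(a) = m$ is torsion, forcing $a = g_*(m)$ to be torsion, contradicting the assumption that $a$ generates a $\mathbb{Z}$-summand of $H_n(A)$.

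To finish, I would compute $\phi_*(w, m) = f_*(g_*(f_*(a))) = f_*(a) = (w, m)$, so the free part of $\phi_*(w, m)$ is exactly $w$. Applying Lemma~\ref{lem:idempotentmapfreepart} to the idempotent $\phi_*$ on $H_n(X)$, with its canonical decomposition into free and torsion parts, then gives $w \in \mathrm{Im}((\phi'_*)_n)$, so $(\phi'_*)_n$ is non-zero. There is no serious obstacle: Lemma~\ref{lem:idempotentmapfreepart} is tailor-made for this situation, and the content of the argument is simply recognising that the algebraic splitting it uses matches the topological splitting $X \simeq W \vee M$ precisely because every space in $\mathcal{M}$ has torsion reduced homology.
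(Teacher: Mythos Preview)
Your proof is correct and follows essentially the same route as the paper's: write $f_*(a)=(w,m)$ in $H_n(W)\oplus H_n(M)$, observe that $w\neq 0$ since $a$ is not torsion and $g_*f_*(a)=a$, note that $\phi_*(w,m)=(w,m)$, and apply Lemma~\ref{lem:idempotentmapfreepart} to conclude $w\in\mathrm{Im}((\phi'_*)_n)$. The only difference is that you spell out explicitly why $H_n(M)$ is torsion (hence why the topological splitting realises the free/torsion splitting), whereas the paper records this fact in the paragraph immediately preceding the lemma and so omits it from the proof itself.
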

\begin{proof}
    Recall that $f:A \rightarrow X$ and $g:X \rightarrow A$ are maps such that $g \circ f \simeq id_A$, and $\phi$ is the composite \[\phi: X \xrightarrow{g} A \xrightarrow{f} X.\] Since $f_n$ is injective, $f_n(a)$ is non-zero and not torsion. Moreover, the composite $g_n \circ f_n$ is the identity, and so $g_n \circ f_n(a)$ is non-zero. Hence, $\phi_n(f_n(a)) = (f_n \circ g_n \circ f_n)(a) = f_n(a)$ is non-zero, and so Lemma ~\ref{lem:idempotentmapfreepart} implies that $(\phi'_*)_n$ is non-zero.
\end{proof}

Let $x$ be a generator of $\mathrm{Im}((\phi'_*)_n)$. Let $H_n(W) = \bigoplus_{i=1}^m \mathbb{Z}$, and write $x$ as $x=(x_1,\cdots,x_m)$. Since $(\phi'_*)_n$ is an idempotent of free abelian groups, $x$ must extend to a basis of $\mathbb{Z}^m$, and so the greatest common divisor of $x_1,\cdots,x_m$ is $1$ (see \cite[Lemma 2.3]{S} for example). B\'ezout's Lemma implies that there exists $y_1,\cdots,y_m \in \mathbb{Z}$ such that $\sum_{i=1}^m y_i x_i = 1$. Let $\rho:S^n \rightarrow W$ be the composite \[\rho:S^n \xrightarrow{\sigma} \bigvee\limits_{i=1}^m S^n \xrightarrow{\bigvee\limits_{i=1}^m p_{x_i}} \bigvee\limits_{i=1}^m S^n \hookrightarrow W,\] where $\sigma$ is a choice of $m$-fold suspension comultiplication, and $p_{x_i}$ is the $x_i^{th}$ degree map. Let $\gamma$ be a generator of $H_n(S^n)$. By definition of $\rho$ and the fact that $p_{x_i}$ induces multiplication by $x_i$ in homology, $\rho_n$ sends $\gamma$ to $x \in H_n(W)$. Now let $\rho'$ be the composite \[\rho':W \xrightarrow{p} \bigvee\limits_{i=1}^m S^n \xrightarrow{\bigvee\limits_{i=1}^m p_{y_i}} \bigvee\limits_{i=1}^m S^n \xrightarrow{\nabla} S^n ,\] where $p$ is the pinch map, and $\nabla$ is the fold map. By definition of $\rho'$, since the degree map $p_{y_i}$ induces multiplication by $y_i$ in homology, $\rho'_n$ sends $x$ to $\gamma$. Since $(\phi'_*)_n$ is idempotent, it fixes its image, and so the composite \[e:S^n \xrightarrow{\rho} W \xrightarrow{\phi'} W \xrightarrow{\rho'} S^n,\] is an isomorphism in homology. This implies that $e$ is a homotopy equivalence. Since $\phi'$ factors through $A$, $S^n$ retracts off $A$. Arguing dually to \cite[Proposition 3.3, Theorem 3.10]{S} for each generator of a $\mathbb{Z}$ summand in $H_*(A)$, we obtain the following.

\begin{proposition}
    \label{prop:retractingspheresoffA}
    Let $X \in \bigvee(\mathcal{W} \cup \mathcal{M})$, and let $A$ be a space which retracts off $X$. Then there is a homotopy equivalence \[A \simeq S \vee A',\] where $S \in \bigvee\mathcal{W}$, and the homology of $A'$ is torsion. \qedno
\end{proposition}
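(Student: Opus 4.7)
The plan is to iterate the sphere-retraction argument developed in the preceding text, splitting off one sphere for each generator of a $\mathbb{Z}$-summand of $H_*(A)$, until the residual space has purely torsion homology. The preceding discussion has already established the key step: for any such generator $a \in H_n(A)$, the composite $e: S^n \xrightarrow{\rho} W \xrightarrow{\phi'} W \xrightarrow{\rho'} S^n$ is a homotopy equivalence factoring through $A$, so $S^n$ retracts off $A$.

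To assemble these individual retractions into a wedge decomposition, I would use that $A$ is a co-$H$ space, being a retract of $X \in \bigvee(\mathcal{W} \cup \mathcal{M})$. The crucial point, dual to the main point of \cite[Proposition 3.3, Theorem 3.10]{S} in the $H$-space setting, is that a sphere retract of a simply-connected co-$H$ space splits off as a wedge summand: $A \simeq S^n \vee A''$, where $A''$ is the cofiber of the inclusion $S^n \hookrightarrow A$. I would prove this by dualising the $H$-space argument in \cite{S}, using the co-$H$ comultiplication on $A$ in place of a loop multiplication and the pinch in place of the fold.

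Given this splitting principle, the remainder is induction on dimension. At stage $n$, starting from a splitting $A \simeq W_{<n} \vee A_{(n-1)}$ with $W_{<n} \in \bigvee\mathcal{W}$ capturing all free homology classes in degree less than $n$, note that $A_{(n-1)}$ still retracts off $X$ (via the wedge-summand inclusion $A_{(n-1)} \hookrightarrow A$, the inclusion $A \hookrightarrow X$, and the composite retraction $X \to A \to A_{(n-1)}$). Finite type ensures that $H_n(A_{(n-1)})$ is finitely generated, so only finitely many $n$-spheres must be split off at this stage, each via the construction in the preceding text. Collecting the resulting wedges of spheres across all dimensions gives $S \in \bigvee\mathcal{W}$ (a finite-type wedge since each dimension contributes only finitely many spheres), and the residual space $A'$ has torsion homology by construction. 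The main obstacle is the co-$H$ wedge-splitting principle of the second paragraph; once this has been dualised carefully from \cite{S}, the induction and the finite-type bookkeeping are routine.
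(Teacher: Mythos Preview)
Your proposal is correct and follows essentially the same approach as the paper, which simply states that the proposition follows by ``arguing dually to \cite[Proposition 3.3, Theorem 3.10]{S}'' after the sphere-retraction construction. You have spelled out exactly what that dualisation entails---the co-$H$ wedge-splitting principle and the dimension-by-dimension induction---which is precisely the content the paper defers to the reference.
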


It now suffices to show that $A'$ in Proposition ~\ref{prop:retractingspheresoffA} is in $\mathcal{M}$. 

\begin{proposition}
    \label{prop:retractingMoorespacesoffA'}
    Let $X \in \bigvee(\mathcal{W} \cup \mathcal{M})$, and let $A'$ be a space which retracts off $X$, such that the homology of $A'$ is torsion. Then $A' \in \bigvee\mathcal{M}$.
\end{proposition}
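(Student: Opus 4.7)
The strategy is to combine Proposition~\ref{prop:cancellation}(2) with Theorem~\ref{thm:Sullivansquare} to upgrade the $p$-local decompositions of $A'$ into an integral wedge decomposition in $\bigvee\mathcal{M}$.

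First, I would verify that $A'$ is a simply connected, finite type co-$H$ space. Simple connectivity and finite type are inherited from $X \in \bigvee(\mathcal{W} \cup \mathcal{M})$ through the retraction $A' \xrightarrow{f} X \xrightarrow{g} A'$, and $A'$ carries the pulled-back comultiplication $(g \vee g) \circ \mu_X \circ f$ from any comultiplication $\mu_X$ of $X$. Since $H_*(A')$ is torsion, $A'$ is rationally trivial.

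Next, for each prime $p$ I would localize to obtain a retract $A'_{(p)}$ of $X_{(p)}$. The space $X_{(p)}$ is a finite type wedge of $p$-local indecomposable summands: $p$-local spheres coming from $\mathcal{W}$; $p$-local Moore spaces $P^n(p^r)_{(p)}$ arising from $p$-primary elements of $\mathcal{M}$; and, when $p = 2$, the $2$-local indecomposable summands of the $\mathcal{M}$-spaces coming from smash products $\Sigma(P^{n_1}(2) \wedge \cdots \wedge P^{n_l}(2))$. Elements of $\mathcal{M}$ that are $q$-primary for $q \neq p$ become contractible. By Proposition~\ref{prop:cancellation}(2), $A'_{(p)}$ is a $p$-local wedge of a subset of these indecomposables. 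The torsion hypothesis on $H_*(A')$ forces $H_*(A'_{(p)};\mathbb{Z}_{(p)})$ to be $p$-primary torsion, which excludes every $p$-local sphere summand. Hence $A'_{(p)} \simeq \bigvee_{j \in J_p} M_{j,(p)}$ for suitable integral representatives $M_j \in \mathcal{M}$.

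I would then form the integral wedge $B := \bigvee_{p} \bigvee_{j \in J_p} M_j$. Finite type of $A'$ guarantees that in each degree only finitely many primes contribute and each $J_p$ is degreewise finite, so $B$ is of finite type and lies in $\bigvee\mathcal{M}$. By construction $B_{(p)} \simeq A'_{(p)}$ for every prime $p$, and both $B$ and $A'$ are rationally trivial. Applying the rationally trivial form of Theorem~\ref{thm:Sullivansquare} then yields $A' \simeq \prod_p A'_{(p)} \simeq \prod_p B_{(p)} \simeq B$, placing $A'$ in $\bigvee\mathcal{M}$.

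The main obstacle is that Theorem~\ref{thm:Sullivansquare} is stated for $H$-spaces, whereas $A'$ and $B$ are co-$H$ spaces. I would resolve this either by appealing to the general Sullivan fracture square for simply connected nilpotent finite type spaces, or by constructing a comparison map $B \rightarrow A'$ directly out of the inclusions of the $p$-primary summands $M_j$ (using that $[M_j, A'] \cong [M_j, A'_{(p_j)}]$ for a $p_j$-primary Moore-type space $M_j$) and checking that it is a $p$-local and rational equivalence via a Whitehead-type argument.
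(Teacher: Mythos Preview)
Your proposal is correct and follows essentially the same route as the paper: rational triviality of $A'$, Proposition~\ref{prop:cancellation}(2) at each prime to identify $A'_{(p)}$ as a wedge of $p$-torsion spaces in $\mathcal{M}$, and then gluing via Theorem~\ref{thm:Sullivansquare}. The only cosmetic difference is that the paper bypasses your auxiliary space $B$ by checking directly that the inclusion $\bigvee_p A'_{(p)} \hookrightarrow \prod_p A'_{(p)}$ is a $p$-local and rational equivalence (hence integral); your caveat about the $H$-space hypothesis in Theorem~\ref{thm:Sullivansquare} is well taken and applies equally to the paper's argument, where it is resolved the same way you suggest, via the general fracture square for simply connected finite type spaces.
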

\begin{proof}
    By assumption, $H_*(A')$ is torsion, and so $A'$ is rationally trivial. By Theorem ~\ref{thm:Sullivansquare}, there is a homotopy equivalence \[A' \simeq \prod\limits_{p} A'_{(p)}.\] For each prime $p$, $A'_{(p)}$ retracts off $X_{(p)}$. Proposition ~\ref{prop:cancellation}(2) therefore implies that each $A'_{(p)}$ is homotopy equivalent to a wedge of $p$-torsion spaces in $\mathcal{M}$.

    Let $i:\bigvee_{p} A'_{(p)}\rightarrow \prod_{p} A'_{(p)}$ be the inclusion. Localised at a prime $p$, the map $i$ is the identity map $A'_{(p)}\rightarrow A'_{(p)}$ and so $i$ is a homotopy equivalence localised at any prime $p$. Rationally each $A'_{(p)}$ is contractible, and so $i$ is a homotopy equivalence rationally. Hence, $i$ is a homotopy equivalence integrally. Putting this all together, we obtain a homotopy equivalence $A' \simeq \bigvee_{p} A'_{(p)}$, where each $A'_{(p)} \in \bigvee\mathcal{M}$.
\end{proof}

Combining Proposition ~\ref{prop:retractingspheresoffA} and Proposition ~\ref{prop:retractingMoorespacesoffA'}, we obtain the following. 

\begin{theorem}
    \label{thm:WcupMclosedunderretracts}
    Let $X \in \bigvee(\mathcal{W} \cup \mathcal{M})$, and let $A$ be a space which retracts off $X$. Then $A \in \bigvee(\mathcal{W} \cup \mathcal{M})$. \qedno
\end{theorem}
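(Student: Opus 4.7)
The statement is the direct combination of Proposition~\ref{prop:retractingspheresoffA} and Proposition~\ref{prop:retractingMoorespacesoffA'}, so my plan is simply to chain these two results together and verify that the intermediate hypotheses are met.

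First I would apply Proposition~\ref{prop:retractingspheresoffA} to the pair $(X,A)$. This produces a homotopy equivalence $A \simeq S \vee A'$, where $S \in \bigvee\mathcal{W}$ is a (finite type) wedge of simply connected spheres and $H_*(A';\mathbb{Z})$ is entirely torsion. Since $S \in \bigvee\mathcal{W} \subseteq \bigvee(\mathcal{W} \cup \mathcal{M})$, the remaining task is to identify $A'$ as a member of $\bigvee\mathcal{M}$.

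Next I would check that $A'$ also retracts off $X$. This is routine: the wedge decomposition $A \simeq S \vee A'$ gives a natural inclusion $A' \hookrightarrow A$ and pinch $A \to A'$ whose composite is the identity on $A'$, and composing with the given retraction maps $f\colon A \to X$ and $g\colon X \to A$ produces maps $A' \to X$ and $X \to A'$ whose composite is homotopic to the identity on $A'$. With this in hand, the hypotheses of Proposition~\ref{prop:retractingMoorespacesoffA'} are satisfied for $A'$, so $A' \in \bigvee\mathcal{M}$.

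Combining these two conclusions, $A \simeq S \vee A'$ is a finite type wedge of spaces in $\mathcal{W} \cup \mathcal{M}$, i.e.\ $A \in \bigvee(\mathcal{W} \cup \mathcal{M})$. There is essentially no obstacle here, since all of the substantive work—the sphere-retraction argument using the idempotent $\phi'$ and B\'ezout's Lemma, and the Sullivan arithmetic square together with the cancellation result of Proposition~\ref{prop:cancellation}(2)—has already been carried out in the two preceding propositions. The only point requiring care is the verification that $A'$ inherits the retraction property from $A$, and this is immediate from the wedge decomposition.
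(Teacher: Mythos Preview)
Your proposal is correct and matches the paper's approach exactly: the paper simply states that the theorem follows by combining Proposition~\ref{prop:retractingspheresoffA} and Proposition~\ref{prop:retractingMoorespacesoffA'}. You have even made explicit the one small point the paper leaves implicit, namely that $A'$ inherits the retraction off $X$ from the wedge splitting $A \simeq S \vee A'$.
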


\section{Closure of \texorpdfstring{$\prod(\mathcal{P} \cup \mathcal{T})$}{P cup T} under retracts}
\label{sec:closureofPandT}
\subsection{Special cases}
Recall that for a collection of topological spaces $\mathcal{X}$, $\prod \mathcal{X}$ is the collection of spaces homotopy equivalent to a finite type product of spaces in $\mathcal{X}$. Moreover, recall the collections $\mathcal{P} := \{S^1,S^3,S^7, \Omega S^n \: | \: n \geq 2, n \notin\{2,4,8\}\}$, and $\mathcal{T}$, which is the collection of indecomposable spaces which appear in the decomposition of the loop space of a wedge of Moore spaces of the form $\bigvee_{i=1}^m P^{n_i}(p_i^{r_i})$, where $m \geq 2$, $n_i \geq 3$, $p_i$ is a prime and $r_i \geq 1$. In this section, we show that $\prod (\mathcal{P} \cup \mathcal{T})$ is closed under retracts. 

We start with some special cases. First, we have the following result from \cite[Theorem 3.10]{S}. 
\begin{theorem}
\label{thm:Pclosedunderretracts}
    Let $X \in \prod\mathcal{P}$, and $A$ be a space which retracts off $X$. Then $A \in \prod\mathcal{P}$. \qedno
\end{theorem}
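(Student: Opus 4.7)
The plan is to dualise the retraction strategy developed in Section ~\ref{sec:closureofWcupM} for $\bigvee(\mathcal{W}\cup\mathcal{M})$, now carried out in the category of $H$-spaces. Each member of $\mathcal{P}$ carries a canonical $H$-space structure, since $S^1, S^3, S^7$ are the Hopf spheres and $\Omega S^n$ is a loop space. Hence $X$ is a finite type $H$-space, the retract $A$ inherits an $H$-space structure, and the retraction yields a splitting $X \simeq A \times F$ with $F$ the homotopy fibre. The overall strategy is first to extract from $A$ factors corresponding to the rationally nontrivial generators of its homotopy, and then to deal with the rationally trivial remainder via the Sullivan arithmetic square.

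For the first step, for each rationally nontrivial generator of $\pi_*(A)$ I would construct a map from an appropriate $\mathcal{P}$-space --- either one of $S^1, S^3, S^7$, or some $\Omega S^n$ --- into $A$ with a left homotopy inverse. Dualising Lemmas ~\ref{lem:idempotentmapfreepart} and ~\ref{lem:wedgenonzeromap}: compose the retraction idempotent $\phi = f \circ g$ on $X$ with the inclusion of and projection onto a single $\mathcal{P}$-factor $Y$ of $X$, obtaining a self-map $\phi' : Y \to Y$ that is an idempotent on the free part of the relevant rational homotopy group. A dual B\'ezout argument, using the $H$-space multiplication on $Y$ in place of the fold map and power maps in place of degree maps on spheres, promotes $\phi'$ to an honest self homotopy equivalence of a copy of $Y$ that factors through $A$; this exhibits $Y$ as a retract of $A$. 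Iterating over a generating set of $\pi_*(A)\otimes\mathbb{Q}$ gives a splitting $A \simeq B \times A'$ with $B \in \prod\mathcal{P}$ and $A'$ rationally trivial.

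For the second step, since $A'$ is a rationally trivial $H$-space, Theorem ~\ref{thm:Sullivansquare} gives $A' \simeq \prod_p A'_{(p)}$. Each $A'_{(p)}$ retracts off $X_{(p)}$, so by Proposition ~\ref{prop:cancellation}(1) it decomposes $p$-locally as a product of indecomposable $p$-local factors of spaces in $\mathcal{P}$. Using classical $p$-local splittings --- notably Serre's $p$-local homotopy equivalence $\Omega S^{2n} \simeq S^{2n-1} \times \Omega S^{4n-1}$ at odd primes, together with the fact that all rationally nontrivial factors were already extracted in the first step --- these $p$-local indecomposables are identified with $p$-localisations of members of $\mathcal{P}$, and reassembling across primes via the arithmetic square shows $A' \in \prod\mathcal{P}$. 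The main obstacle is the first step: the B\'ezout-type construction must be performed in homotopy rather than homology, and care is needed to induct on degree so that successive choices of target generator remain compatible as lower-degree $\mathcal{P}$-factors are split off, especially at the Hopf sphere dimensions $1, 3, 7$ where the generator may be realised either by a sphere factor or by an $\Omega S^n$-factor.
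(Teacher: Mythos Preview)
The paper does not prove this theorem here; it is quoted from \cite{S}, and the argument is summarised in Subsection~\ref{subsec:setupgeneral}. That argument works in \emph{homology}, not homotopy: one looks for a primitive generator in $H_n(A;\mathbb{Z})$, finds a corresponding element in the image of the idempotent $\phi_*$ lying in the primitive submodule of $H_n(X)$, writes it in terms of the basis coming from the bottom cells of the relevant $\mathcal{P}$-factors, and uses B\'ezout together with the \emph{atomicity} of $\Omega S^n$ (from \cite{CPS}) to conclude that the resulting self-map of a single $\mathcal{P}$-space is a homotopy equivalence. Your outline is in the right spirit but diverges in several respects.

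First, your map $\phi'$ is described as a self-map of a \emph{single} factor $Y$, but the B\'ezout step requires all factors of the same type simultaneously: the element $x$ lives in $\bigoplus_i H_n(Y_i)$, and the maps $\rho,\rho'$ are built from power maps on each $Y_i$ followed by inclusion into the product (for $\rho$) and projection then $H$-multiplication on $Y$ (for $\rho'$). Second, the passage from ``isomorphism on the bottom class'' to ``homotopy equivalence'' is not formal: it needs the atomicity of $\Omega S^n$ at each prime, which you do not invoke. Third, and most substantively, you identify the Hopf dimensions $1,3,7$ as the delicate case, but the genuinely subtle case in \cite{S} is $n=4m+2$ with $m\geq 2$, $m\neq 3$: here both $\Omega S^{4m+3}$ and $\Omega S^{2m+2}$ contribute primitives in degree $4m+2$, and one must show the coefficients on the $\Omega S^{2m+2}$ classes are even before the B\'ezout argument goes through. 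Your induction-by-degree remark gestures at this but does not address it.

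Finally, your second step is superfluous and slightly confused. Every space in $\mathcal{P}$ has torsion-free integral homology, so $H_*(X;\mathbb{Z})$ is torsion-free; hence any retract $A'$ of $X$ with torsion homology (equivalently, rationally trivial) is contractible. There is no need for the arithmetic square, Serre's odd-primary splitting, or reassembly across primes --- and indeed the $p$-local indecomposable pieces of $X_{(p)}$ are never rationally trivial, so the identification you propose cannot produce a nontrivial $A'$.
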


We can also prove a similar result in the case of a space $A$ retracting off $X \in \prod(\mathcal{P} \cup \mathcal{T})$, where the homology of $A$ is torsion.

\begin{theorem}
\label{thm:Tclosedunderretracts}
    Let $X \in \prod(\mathcal{P} \cup \mathcal{T})$, and $A$ be a space which retracts off $X$, such that the homology of $A$ is torsion. Then $A \in \prod\mathcal{T}$.
\end{theorem}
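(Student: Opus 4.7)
The plan is to reduce to a $p$-local statement via the Sullivan arithmetic square and then invoke the cancellation principle of Proposition~\ref{prop:cancellation}. To set this up, I first observe that $A$ inherits an $H$-space structure from $X$: if $f\colon A\to X$ and $g\colon X\to A$ witness the retraction and $\mu_X$ is the multiplication on $X$, then $g\circ\mu_X\circ(f\times f)$ is an $H$-multiplication on $A$ (the unit axiom follows from $g\circ f\simeq\mathrm{id}_A$). Moreover, $A$ is of finite type, and the torsion hypothesis on $H_*(A)$ forces $A$ to be rationally contractible, so Theorem~\ref{thm:Sullivansquare} produces a homotopy equivalence $A\simeq\prod_p A_{(p)}$. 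It therefore suffices to show that $A_{(p)}\in\prod\mathcal{T}$ for each prime $p$.

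Fix a prime $p$. Since $X\in\prod(\mathcal{P}\cup\mathcal{T})$ is finite type, its $p$-localization $X_{(p)}$ decomposes as a product of $p$-local indecomposable $H$-spaces, and by Proposition~\ref{prop:uniquedecomp} this decomposition is unique. I would sort these indecomposable factors into two classes: \emph{torsion-free factors}, arising from $p$-local decompositions of the $\mathcal{P}$-factors of $X$ (after further $p$-local splittings such as James' $\Omega S^{2n}\simeq S^{2n-1}\times\Omega S^{4n-1}$ at odd primes), which are torsion-free in $\mathbb{Z}_{(p)}$-homology because $H_*(\Omega S^n;\mathbb{Z})$ is torsion-free; and \emph{$p$-primary factors}, which are precisely the $p$-local members of $\mathcal{T}$ appearing among the $\mathcal{T}$-factors of $X$ (the $q$-local $\mathcal{T}$-factors for primes $q\neq p$ become contractible after localization at $p$).

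By Proposition~\ref{prop:cancellation}(1), $A_{(p)}$ is $p$-locally a finite-type product of a sub-collection of these indecomposable factors. Since $H_*(A;\mathbb{Z})$ is torsion, $H_*(A_{(p)};\mathbb{Z}_{(p)})$ has no free part, so no torsion-free indecomposable factor can appear in the decomposition of $A_{(p)}$. Every factor that does appear is therefore a $p$-primary indecomposable member of $\mathcal{T}$, giving $A_{(p)}\in\prod\mathcal{T}$. Assembling over all primes via $A\simeq\prod_p A_{(p)}$ then yields $A\in\prod\mathcal{T}$. The main subtlety is the two-class classification of indecomposable factors of $X_{(p)}$: one must verify that the classes really are disjoint in homology, which rests on the fact that $\Omega\bigl(\bigvee_{i=1}^m P^{n_i}(p_i^{r_i})\bigr)$ has torsion integral homology (its reduced homology is a sub/quotient of the tensor algebra on the torsion module $\widetilde{H}_*\bigl(\bigvee P^{n_i}(p_i^{r_i})\bigr)$ by the Bott--Samelson splitting), so no space in $\mathcal{T}$ is a sphere or loops on a sphere, and in particular $\mathcal{T}$ is disjoint from $\mathcal{P}$ at the level of homology.
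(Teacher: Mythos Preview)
Your proposal is correct and follows essentially the same route as the paper: reduce to a $p$-local problem via the Sullivan arithmetic square (Theorem~\ref{thm:Sullivansquare}), then apply cancellation (Proposition~\ref{prop:cancellation}) to conclude that each $A_{(p)}\in\prod\mathcal{T}$. The paper's proof is terser and simply asserts that cancellation yields $A_{(p)}\in\prod\mathcal{T}$ without spelling out the torsion-free/torsion dichotomy of the indecomposable factors of $X_{(p)}$, so your version is a more detailed execution of the same argument.
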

\begin{proof}
    Since the homology of $A$ is torsion, $A$ is rationally trivial. Therefore by Theorem ~\ref{thm:Sullivansquare}, there is a homotopy equivalence \[A \simeq \prod\limits_{p} A_{(p)}.\] For each prime $p$, $A_{(p)}$ retracts off $X_{(p)}$. Proposition ~\ref{prop:cancellation} implies that each $A_{(p)} \in \prod\mathcal{T}$, and so $A \in \prod\mathcal{T}$.
\end{proof}

\subsection{Review of the proof of Theorem ~\ref{thm:Pclosedunderretracts}}
\label{subsec:setupgeneral}

First, we recall the strategy from \cite[Section 3]{S} which was used to prove Theorem ~\ref{thm:Pclosedunderretracts}. The strategy is similar to the one used in Section ~\ref{sec:closureofWcupM}. Let $X \in \prod\mathcal{P}$, and let $A$ be a space which retracts off $X$. This implies there are maps $f:A \rightarrow X$, and $g:X \rightarrow A$ such that $g \circ f$ is homotopic to the identity on $A$. The first ingredient of the proof is the idempotent $\phi: X \xrightarrow{g} A \xrightarrow{f} X$. The key property that is used here is that $\phi_*$ is an idempotent on homology, and so $\phi_*$ fixes its image. Let $n$ be an integer such that $H_n(A)$ contains a primitive generator. The proof is split into three cases, the first is where $n \in \{1,2,3,6,7,14,4m\:|\: m \geq 1\}$, the second is where $n = 4m+2$, $m \geq 2$, $m \neq 3$, and the third is where $n = 2m-1$, where $m \notin \{1,2,4\}$. 

Consider the first and third case (see \cite[Subsections 3.2 and 3.4]{S}), where \[n \in  \{1,2,3,6,7,14,4m, 2l-1\:|\: m,l \geq 1, l \notin \{1,2,4\}\}.\] Fix such an $n$ and write $X$ as \[X \simeq \prod\limits_{i=1}^{m_Y} Y_i \times \prod\limits_{\alpha' \in \mathcal{I}'} Z_{\alpha'},\] where each $Y_i$ is an instance of $S^{n}$ if $n \in \{1,3,7\}$, or $Y_i = \Omega S^{n+1}$ otherwise, and each $Z_{\alpha'}$ are the spheres and loops on spheres not equal to $Y_i$. Let $Y = S^n$ if $n \in \{1,3,7\}$, or $Y = \Omega S^{n+1}$ otherwise. In this case, the bottom non-vanishing degree of each $Y_i$ gives a basis of primitives $\{\gamma_1,\cdots,\gamma_{m_Y}\}$ of $H_n(X)$. It was shown that there exists a non-zero element $x= \sum_{i=1}^{m_Y} y_i \gamma_i \in \text{Im}(\phi_*)$ such that the greatest common divisor of $y_1,\cdots,y_{m_Y}$ is $1$. Let $\gamma$ be a generator of the lowest non-vanishing degree in the homology of $Y$. Two maps $\rho:Y \rightarrow X$, and $\rho':X \rightarrow Y$ were defined such that $\rho_*(\gamma) = x$, and $\rho'_*(x) = \gamma$. Since $\phi_*$ fixes its image, the composite \[e:Y \xrightarrow{\rho} X \xrightarrow{\phi} X \xrightarrow{\rho'} Y\] is an isomorphism on the lowest non-vanishing degree in homology. If $Y$ is a sphere, then $e$ is a homotopy equivalence by Whitehead's theorem. If $Y$ is the loops on a sphere, localisation and atomicity properties of the loops on spheres (with a slight adjustment to the maps $\rho$ and $\rho'$ in the case that $n = 2l-1$) are used to show that $e$ is a homotopy equivalence, implying that $Y$ retracts off $X$. 

Now consider the second case (see \cite[Subsection 3.3]{S}), where $H_{4n+2}(A)$ contains a primitive generator, $n \geq 2$, $n \neq 3$. In this case, write $X$ as \[X \simeq \prod\limits_{i=1}^{m_Y} \Omega S^{4n+3}_i \times \prod\limits_{j=1}^{m_{\overline{Y}}} \Omega S^{2n+2}_j \times \prod\limits_{\alpha' \in \mathcal{I}'} Z_{\alpha'}\] where each $Z_{\alpha'}$ are the spheres and loops on spheres that are not equal to $\Omega S^{4n+3}$ or $\Omega S^{2n+2}$. A basis of primitives $\{\gamma_1,\cdots,\gamma_{m_Y},\overline{\gamma}_{1},\cdots,\overline{\gamma}_{m_{\overline{Y}}}\}$ was obtained of $H_{4n+2}(X)$, where $\gamma_i$ is a generator of $H_{4n+2}(\Omega S^{4n+3}_i)$, and $\overline{\gamma}_i$ is a generator of $H_{4n+2}(\Omega S^{2n+2}_i)$. It was shown that there exists a non-zero element $\sum_{i=1}^{m_Y} y_i \gamma_i + \sum_{i=1}^{m_{\overline{Y}}} 2\overline{y}_i\overline{\gamma}_i \in \text{Im}(\phi_*)$ such that the greatest common divisor of $y_1,\cdots,y_{m_Y},2\overline{y}_1,\cdots,2\overline{y}_{m_{\overline{Y}}}$ is $1$, and as in the previous case, this element was used to define maps $\rho:Y \rightarrow X$, and $\rho':X \rightarrow Y$ such that the composite \[Y \xrightarrow{\rho} X \xrightarrow{\phi} X \xrightarrow{\rho'} Y\] is a homotopy equivalence, implying that $Y$ retracts off $X$. Therefore, for each primitive generator in $H_*(A)$, we can retract a sphere or the loops on a sphere off $A$. Iterating this, we obtain a product decomposition for $A$ as a product of spheres and loops on spheres (see \cite[Theorem 3.10]{S}).

To generalise to the case where $X \in \prod(\mathcal{P} \cup \mathcal{T})$, we first retract off all the spheres and loops on spheres that we expect to obtain in a decomposition for $A$, by analysing the coalgebra structure of $H_*(A;\mathbb{Q})$. This will give us a homotopy equivalence $A \simeq P \times A'$, where $P \in \prod\mathcal{P}$, and the homology of $A'$ is torsion. We can then use Theorem ~\ref{thm:Sullivansquare} to obtain that $A' \in \prod\mathcal{T}$. 

\subsection{Defining \texorpdfstring{$\phi'$}{phi'}}
\label{subsec:phidefinition}

From now on, homology will be assumed to be taken to be taken with integral coefficients unless otherwise stated. The coalgebra structure on the homology of a space is defined whenever the K\"unneth isomorphism holds. In this case however, the homology of $X$ and $A$ may contain torsion, and so we can not appeal to the coalgebra structure in order to repeat the argument for Theorem ~\ref{thm:Pclosedunderretracts}. However, we will adjust the map $\phi$ to obtain a self map $\phi'$ of the spheres and loops on spheres that appear in the product decomposition of $X$ which is idempotent in homology. This will allow us to find the required elements in the image of $\phi'$ in order to appeal to the argument in \cite{S}. Let $X \in \prod(\mathcal{P} \cup \mathcal{T})$, and let $A$ be a space which retracts off $X$, such that $H_*(A;\mathbb{Q})$ is non-trivial. In particular there exists maps $f:A \rightarrow X$ and $g:X \rightarrow A$ such that $g \circ f$ is homotopic to the identity on $A$. Observe that the map $\phi = f \circ g$ is an idempotent. Write $X$ as $X \simeq S \times M$, where $S \in \prod\mathcal{P}$ and $M \in \prod\mathcal{T}$. Define the map $\phi':S \rightarrow S$ as the composite \[\phi':S \hookrightarrow X \xrightarrow{\phi} X \xrightarrow{\pi} S,\] where the left map is the inclusion of $S$ into $X$, and $\pi$ is the projection. We would like $\phi'$ to also be an idempotent in order to emulate the map $\phi$ in the case where $X \in \prod\mathcal{P}$. This may not be true for the map itself, however, the inclusion $S \rightarrow X$ induces the inclusion of the free part of $H_*(X)$ and the projection $X \rightarrow S$ induces the projection onto the free part of $H_*(X)$. Therefore, Lemma ~\ref{lem:idempotentmapfreepart} implies that $(\phi')_*$ is an idempotent on homology.

To appeal to the argument in \cite[Section 3]{S}, we require a primitive element $x \in \text{Im}(\phi')$ with the properties as described in Subsection ~\ref{subsec:setupgeneral}. First, we need to show that in certain degrees, $(\phi')_*$ is non-zero when restricted to the submodule of primitives in $H_*(S)$. In the case that $X \in \prod\mathcal{P}$, this was done by showing that $\phi_n$ is non-zero when restricted to the submodule of primitives whenever there is a primitive generator in $H_n(A)$. However, in this case, we can not appeal to the coalgebra structure in integral homology as $H_*(A)$ may contain torsion. However, rational homology $H_*(A;\mathbb{Q})$ does have a coalgebra structure. We can use this to show that if $a \in H_n(A)$ is an element which reduces to a primitive generator in rational homology, then $(\phi'_*)_n$ is non-zero when restricted to the submodule of primitives in $H_n(S)$.

\begin{lemma}
    \label{lem:phinonzero}
Suppose that $a \in H_*(A)$ is a generator of a $\mathbb{Z}$ summand in degree $n$, which reduces to a primitive generator in rational homology. Then $f_*(a)$ maps to an element in $H_n(X)$ whose free part reduces to an element in the submodule of primitives in rational homology, and $(\phi'_*)_n$ is non-zero when restricted to the submodule of primitives in $H_*(S)$.
\end{lemma}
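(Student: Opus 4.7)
The plan is to locate a specific primitive class $u' \in H_n(S)$ which is fixed by $\phi'_*$, and to realise it as the free part of $f_*(a)$ pulled back through $i_*$. The key setup observation is that each factor of $M \in \prod\mathcal{T}$ is rationally trivial, so $H_*(M;\mathbb{Q})$ is concentrated in degree $0$. By the K\"unneth theorem and the torsion-freeness of $H_*(S)$, one obtains a decomposition $H_n(X) = i_*(H_n(S)) \oplus T_n$ with $T_n$ torsion; hence $i_*$ identifies $H_n(S)$ with the free part of $H_n(X)$, and the projection $\pi_*$ restricts to the inverse isomorphism on that summand. In particular $i: S \hookrightarrow X$ is a rational homotopy equivalence.

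For the first assertion, since $a$ generates a $\mathbb{Z}$ summand and reduces to a primitive in $H_n(A;\mathbb{Q})$, and since the continuous map $f$ induces a rational coalgebra map, $f_*(a)$ reduces to a primitive in $H_n(X;\mathbb{Q})$. Write $f_*(a) = u + t$ with $u \in H_n(X)_{free}$ and $t \in H_n(X)_{tor}$. Rationalisation kills $t$, so the rational reduction of $u$ equals that of $f_*(a)$, and is primitive. Let $u' \in H_n(S)$ be the unique class with $i_*(u') = u$.

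Next I would verify that $u'$ is primitive in $H_*(S)$, not merely rationally primitive. Writing $\Delta_{S*}(u') = u' \otimes 1 + 1 \otimes u' + \delta$ with $\delta$ of positive bidegree, the fact that $i$ is a coalgebra map gives $\Delta_{X*}(u) = u \otimes 1 + 1 \otimes u + (i_* \otimes i_*)(\delta)$. Rational primitivity of $u$ forces $(i_* \otimes i_*)(\delta)$ to vanish in $H_*(X;\mathbb{Q}) \otimes H_*(X;\mathbb{Q})$. Because $i$ is a rational equivalence, $(i_* \otimes i_*)_{\mathbb{Q}}$ is injective, so $\delta$ is rationally zero; torsion-freeness of $H_*(S) \otimes H_*(S)$ upgrades this to $\delta = 0$ integrally. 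Hence $u'$ lies in the submodule of primitives of $H_n(S)$.

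Finally, the retraction identity $g \circ f \simeq \mathrm{id}_A$ yields $\phi \circ f \simeq f$, so $\phi_*(u+t) = u+t$. Since a group homomorphism sends torsion to torsion, $\phi_*(t)$ is torsion, so the free part of $\phi_*(u)$ equals $u$. Applying $\pi_*$ gives $\phi'_*(u') = \pi_*(\phi_*(i_*(u'))) = \pi_*(\phi_*(u)) = u'$. Since $g_* \circ f_* = \mathrm{id}$ forces $f_{*,\mathbb{Q}}$ to be injective and $a$ is non-zero rationally, $u$ and hence $u'$ are non-zero, so $(\phi'_*)_n$ is non-trivial on primitives. The main technical point is the integral primitivity of $u'$: rational primitivity is immediate from the coalgebra behaviour of $f_*$, but passing back to an integral primitive in $H_*(S)$ crucially uses torsion-freeness of $H_*(S)$ together with the fact that $S \hookrightarrow X$ is a rational equivalence.
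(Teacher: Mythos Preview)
Your proof is correct and follows essentially the same route as the paper: identify the free part of $f_*(a)$ with a class $u'\in H_n(S)$, use torsion-freeness of $H_*(S)$ to upgrade rational primitivity of $u'$ to integral primitivity, and show $u'$ is fixed by $\phi'_*$ (the paper obtains this last point by invoking Lemma~\ref{lem:idempotentmapfreepart}, whereas you compute it directly from $\phi\circ f\simeq f$). One small caution: the displayed identity $\Delta_{X*}(u)=u\otimes 1+1\otimes u+(i_*\otimes i_*)(\delta)$ tacitly uses an integral coalgebra structure on $H_*(X)$, which need not exist because $H_*(X)$ can have torsion; the clean fix is to run that step entirely with rational coefficients (as you effectively do in the next sentence), or to bypass $H_*(X)$ and argue directly that $u'$ is primitive in $H_*(S;\mathbb{Q})$ via the rational coalgebra isomorphism $i_*\colon H_*(S;\mathbb{Q})\xrightarrow{\cong} H_*(X;\mathbb{Q})$, which is how the paper's one-line ``since $S\in\prod\mathcal{P}$'' should be read.
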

\begin{proof}
    Let $\overline{a} \in H_*(A;\mathbb{Q})$ be the primitive generator which is the reduction of $a$. Recall that $\phi'$ is the composite \[\phi':S \hookrightarrow X \xrightarrow{\phi} X \xrightarrow{\pi} S,\] where $S$ is the product of spheres and loops on spheres that appear in $X$, $\phi$ is the composite $f \circ g$ which is an idempotent, the left map is the inclusion of $S$ into $X$, and $\pi$ is the projection. By the naturality of the universal coefficient theorem with respect to coefficients, there is a commutative diagram \[\begin{tikzcd}
	{H_n(A)} & {H_n(X)} \\
	{H_n(A)\otimes \mathbb{Q}} & {H_n(X) \otimes \mathbb{Q}.}
	\arrow["{f_n}", from=1-1, to=1-2]
	\arrow[from=1-1, to=2-1]
	\arrow[from=1-2, to=2-2]
	\arrow["{f_n \otimes \mathbb{Q}}", from=2-1, to=2-2]
\end{tikzcd}\] In particular, since $H_n(A) \otimes \mathbb{Q}$ is a coalgebra, and $\overline{a}$ is a primitive generator, $f_n(a)$ must map to an element $x \in H_n(X)$ such that the free part of $x$, which we will denote by $x'$, reduces to a primitive element in $H_n(X) \otimes \mathbb{Q}$. This proves the first part of the lemma. Since $f_n(a)$ is injective, $x$ is non-zero and has infinite order, and so $x'$ is also non-zero. By definition of $\phi$, the image of $\phi$ is equal to the image of $f$, and so $x \in \text{Im}(\phi)$. Lemma ~\ref{lem:idempotentmapfreepart} implies that $x'$ is in the image of $(\phi'_*)_n$. Since $S \in \prod\mathcal{P}$ and $x'$ reduces to a primitive element in rational homology, $(\phi'_*)_n(x')$ is contained in the submodule of primitives of $H_n(S)$ integrally. Hence $(\phi'_*)_n$ is non-zero when restricted to the submodule of primitives in $H_*(S)$.
\end{proof}

\subsection{Case 1}

In this subsection, fix $n \in \{1,2,3,6,7,14,4m,2l-1\:|\: m,l \geq 1, l \notin \{1,2,4\}\}$ such that $A$ contains a generator $a \in H_n(A)$ which reduces to a primitive generator in rational homology. Write $S$ as \[S \simeq \prod\limits_{i=1}^{m_Y} Y_i \times \prod\limits_{\alpha' \in \mathcal{I}'} Z_{\alpha'},\] where each $Y_i$ is an instance of $S^{n}$ if $n \in \{1,3,7\}$, or $Y_i = \Omega S^{n+1}$ otherwise, and the factors $Z_{\alpha'}$ are the spheres, and loops on spheres that are not equal to $Y_i$. In this case, the bottom non-vanishing degree of each $Y_i$ gives a basis of primitives $\{\gamma_1,\cdots,\gamma_{m_Y}\}$ of $H_n(S)$. As in Subsection ~\ref{subsec:setupgeneral}, we require an element $\sum_{i=1}^{m_Y} y_i \gamma_i \in \text{Im}(\phi')$ such that the greatest common divisor of $y_1,\cdots,y_{m_Y}$ is $1$. By Lemma ~\ref{lem:phinonzero}, $(\phi'_*)_n$ is a non-zero idempotent map in this degree. Let $\lambda = \sum_{i=1}^{m_Y} z_i \gamma_i$ be a generator of $\text{Im}(\phi')$. Since $(\phi'_*)_n$ is an idempotent, the greatest common divisor of $z_1,\cdots,z_{m_Y}$ is $1$ (see for example \cite[Section 2]{S}). Hence $\lambda$ has the desired properties. Therefore, we can argue as in \cite[Subsection 3.2]{S} in the case where $n \in \{1,2,3,6,7,14,4m\:|\: m \geq 1\}$, and \cite[Subsection 3.4]{S} in the case where $n = 2l-1$, $l \notin \{1,2,4\}$ to retract the following spheres and loops on spheres off $A$: for each generator of a $\mathbb{Z}$ summand in $H_n(A)$ which reduces to a primitive generator in rational homology, an $S^n$ or $\Omega S^{n+1}$ retracts off $A$. We obtain the following.

\begin{lemma}
    \label{lem:case1restractspheres}
    Let $X \in \prod(\mathcal{P} \cup \mathcal{T})$, and $A$ be a space which retracts off $X$. There is a homotopy equivalence \[A \simeq P \times A',\] where $P$ is a product of spheres of the form $S^{n}$, where $n \in \{1,3,7\}$, and loops on spheres of the form $\Omega S^{m+1}$, where $m \in \{2,6,14,4m,2l-1\:|\: m \geq 1,l \notin \{1,2,4\}\}$. Moreover, the only generators in $H_*(A')$ which reduce to primitive generators in rational homology are in degrees of the form $4k+2$, where $k \geq 2$, $k \neq 3$. \qedno
\end{lemma}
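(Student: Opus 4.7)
The plan is to turn the preceding paragraph into an induction on primitive rational generators. Fix $n \in \{1,2,3,6,7,14,4m,2l-1\mid m,l\geq 1,\, l\notin\{1,2,4\}\}$ and a generator $a \in H_n(A)$ of a $\mathbb{Z}$-summand that reduces to a primitive rational generator. Lemma \ref{lem:phinonzero} gives that $(\phi'_*)_n$ is a nonzero idempotent on the submodule of primitives of the free abelian group $H_n(S)$; standard linear algebra over $\mathbb{Z}$ then yields a generator $\lambda = \sum_{i=1}^{m_Y} y_i \gamma_i \in \text{Im}((\phi'_*)_n)$ whose coefficients have $\gcd$ one. Feeding $\lambda$ into the construction of \cite[Subsection 3.2]{S} (when $n \in \{1,2,3,6,7,14,4m\}$) or \cite[Subsection 3.4]{S} (when $n = 2l-1$) produces maps $\rho \colon Y \to S$ and $\rho' \colon S \to Y$, where $Y = S^n$ if $n \in \{1,3,7\}$ and $Y = \Omega S^{n+1}$ otherwise, such that $\rho' \circ \phi' \circ \rho$ is an isomorphism on the bottom non-vanishing homology of $Y$. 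Whitehead's theorem (when $Y$ is a sphere) or the atomicity/localisation arguments reviewed in Subsection \ref{subsec:setupgeneral} (when $Y$ is a loop space) then upgrade this composite to a homotopy equivalence $Y \to Y$, and since $\phi'$ factors through $A$, this exhibits $Y$ as a retract of $A$.

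Next, I would iterate. Writing $A \simeq Y \times A_1$, the space $A_1$ still retracts off $X$ by composing the retraction $X \to A$ with the projection $A \to A_1$, and the number of $\mathbb{Z}$-summand generators of $H_*(A_1)$ reducing to primitive rational generators in listed degrees has strictly decreased by one. Because $H_*(X)$ has finite type and $f_*\colon H_*(A) \hookrightarrow H_*(X)$ is injective, only finitely many such iterations are possible. Running the construction over every listed degree and every primitive rational generator therein yields a homotopy equivalence $A \simeq P \times A'$ in which $P$ has exactly the form asserted in the lemma, and $A'$ carries no $\mathbb{Z}$-summand generator (in any listed degree) reducing to a primitive rational generator.

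To extract the final sentence, I would observe that every positive integer falls into exactly one of two classes: either it lies in the indexing set $\{1,2,3,6,7,14\} \cup \{4m \mid m\geq 1\} \cup \{2l-1 \mid l \geq 1,\, l \notin \{1,2,4\}\}$ (noting that $l = 1, 2, 4$ would contribute the redundant values $1, 3, 7$), or it is of the form $4k+2$ with $k \geq 2$ and $k \neq 3$ (the excluded case $k = 3$ recovering $14$, already in the list). Consequently, any remaining $\mathbb{Z}$-summand generator of $H_*(A')$ that reduces to a primitive rational generator is forced into a degree of the second form, as claimed.

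The main technical point I expect to need care with is justifying that the induction step preserves the hypothesis of Lemma \ref{lem:phinonzero}, i.e.\ that $A_1$ really does have $\mathbb{Z}$-summand generators reducing to primitive rational generators in the remaining listed degrees. This follows because the projection $A \simeq Y \times A_1 \to A_1$ is a map of rational coalgebras, so primitive rational generators of $A_1$ pull back to primitive rational generators of $A$; in particular, the primitives of $A$ not accounted for by the factor $Y$ remain visible in $A_1$, and the hypothesis of Lemma \ref{lem:phinonzero} carries over at each stage with $A$ replaced by $A_1$.
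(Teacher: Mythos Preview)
Your proposal is correct and follows essentially the same approach as the paper: the paper's ``proof'' is really the paragraph immediately preceding the lemma, which invokes Lemma~\ref{lem:phinonzero}, extracts a generator of $\mathrm{Im}(\phi'_*)$ with coprime coefficients, and then defers to \cite[Subsections~3.2 and~3.4]{S} for the retraction of each $Y$ and the iteration. You have supplied more detail than the paper does---in particular the explicit check that the induction hypothesis persists for $A_1$, and the verification that the complement of the listed degrees is exactly $\{4k+2 : k\geq 2,\, k\neq 3\}$---but the underlying argument is the same. One small point of phrasing: your finiteness claim (``only finitely many such iterations are possible'') should be read degree-by-degree, since the total number of factors in $P$ may well be infinite (the product is only of finite type); this is how the iteration in \cite[Theorem~3.10]{S} is organised as well.
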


\subsection{Case 2}

By Lemma ~\ref{lem:case1restractspheres}, it suffices to consider a space $A$ which retracts off $X \in \prod(\mathcal{P} \cup \mathcal{T})$, such that the only generators of $H_*(A)$ which reduce to primitive generators in rational homology are in degrees of the form $4n+2$, where $n \geq 2$, and $n \neq 3$. Fixing $n$, write $X$ as \[X \simeq \prod\limits_{i=1}^{m_Y} \Omega S^{4n+3}_i \times \prod\limits_{j=1}^{m_{\overline{Y}}} \Omega S^{2n+2}_j \times \prod\limits_{\alpha' \in \mathcal{I}'} Z_{\alpha'}\] where the factors $Z_{\alpha'}$ are the spheres, loops on spheres and indecomposable torsion spaces that are not equal to $\Omega S^{4n+3}$ or $\Omega S^{2n+2}$. For $1 \leq i \leq m_{Y}$, let $\gamma_i$ be the generator of $H_{4n+2}(X)$ corresponding to a generator of $H_{4n+2}(\Omega S^{4n+3}_i)$, and for $1 \leq j \leq m_{\overline{Y}}$, let $\overline{\gamma}_j$ be the generator of $H_{4n+2}(X)$ corresponding to a generator of $H_{4n+2}(\Omega S_j^{2n+2})$. Observe that by definition of $X$, the generators $\gamma_i$ and $\overline{\gamma}_j$ form a basis for the submodule of primitives of rational homology in degree $4n+2$. Let $a \in H_{4n+2}(A)$ be a generator of a $\mathbb{Z}$ summand which reduces to a primitive generator in rational homology. By Lemma ~\ref{lem:idempotentmapfreepart}, $f_*(a) = \sum_{i=1}^{m_Y} y_i\gamma_i + \sum_{j=1}^{m_{\overline{Y}}} \overline{y}_j \overline{\gamma}_j + t$, where $t$ has finite order. Recall $\phi'$ is the composite \[\phi':S \hookrightarrow X \xrightarrow{\phi} X \xrightarrow{\pi} S,\] where the left map is the inclusion of $S$ into $X$, $\phi = f \circ g$ and $\pi$ is the projection. Since $f_*$ is injective, the image of $\phi_*$ is equal to the image of $f_*$. Therefore, by Lemma ~\ref{lem:phinonzero}, $\sum_{i=1}^{m_Y} y_i\gamma_i + \sum_{j=1}^{m_{\overline{Y}}} \overline{y}_j \overline{\gamma}_j$ is in the image of $(\phi')_{4n+2}$. As described in Subsection ~\ref{subsec:setupgeneral}, to retract $\Omega S^{4n+2}$ off $A$, it suffices to show that the greatest common divisor of $y_1,\cdots,y_{m_Y},\overline{y}_1,\cdots,\overline{y}_{m_{\overline{Y}}}$ is $1$, and that each $\overline{y}_j$ is even.

\begin{lemma}
    \label{lem:gcdisone}
    The greatest common divisor of $y_1,\cdots,y_{m_Y},\overline{y}_1,\cdots,\overline{y}_{m_{\overline{Y}}}$ is $1$.
\end{lemma}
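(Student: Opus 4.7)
The plan is to exploit the identity $g \circ f \simeq \mathrm{id}_A$ and pass to the torsion-free quotient of $H_{4n+2}(A)$ in order to use that $a$ is indivisible modulo torsion. Concretely, set $d = \gcd(y_1,\dots,y_{m_Y},\overline{y}_1,\dots,\overline{y}_{m_{\overline{Y}}})$, so we can write
\[
f_*(a) = d\lambda + t, \qquad \lambda = \sum_{i=1}^{m_Y} y'_i\gamma_i + \sum_{j=1}^{m_{\overline{Y}}} \overline{y}'_j\overline{\gamma}_j,
\]
where $y_i = dy'_i$, $\overline{y}_j = d\overline{y}'_j$, and $t$ has finite order. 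The goal is to deduce $d=1$.

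Applying $g_*$ and using that $g_*\circ f_* = \mathrm{id}$, we get $a = d\cdot g_*(\lambda) + g_*(t)$. Since $t$ is a torsion class, so is $g_*(t)$. Let $\pi:H_{4n+2}(A) \rightarrow H_{4n+2}(A)/\mathrm{torsion}$ denote the projection onto the free quotient. Then $\pi(g_*(t)) = 0$, so $\pi(a) = d \cdot \pi(g_*(\lambda))$ inside the free abelian group $H_{4n+2}(A)/\mathrm{torsion}$. The hypothesis that $a$ is a generator of a $\mathbb{Z}$ summand of $H_{4n+2}(A)$ says precisely that $\pi(a)$ is indivisible (part of a basis) in $H_{4n+2}(A)/\mathrm{torsion}$. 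Hence $d = \pm 1$, which gives the claim.

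There is no real obstacle here beyond keeping the torsion bookkeeping straight: the torsion summand $t$ of $f_*(a)$ is invisible to any argument taking place in the free quotient, and once one passes there, the retraction property of $g \circ f$ together with indivisibility of $a$ forces the gcd to be $1$. This is the exact analogue of the corresponding step in \cite[Subsection 3.3]{S}, adapted to account for the fact that $f_*(a)$ is now only determined up to a torsion correction because $H_*(X)$ is no longer torsion free.
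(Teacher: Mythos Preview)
Your proof is correct and follows essentially the same approach as the paper: both apply $g_*$ to $f_*(a)$, use $g_* \circ f_* = \mathrm{id}$, and then exploit the indivisibility of $a$ modulo torsion to force $d=1$. The only cosmetic difference is that you pass to the torsion-free quotient $H_{4n+2}(A)/\mathrm{torsion}$, whereas the paper tensors with $\mathbb{Q}$; for finitely generated abelian groups these are interchangeable here.
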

\begin{proof}
Let $y = (y_1,\cdots,y_{m_{Y}},\overline{y}_1,\cdots,\overline{y}_{m_{\overline{Y}}})$, and suppose $y = dy'$ for some $d > 1$. In this proof, we work with homology with rational coefficients and use the same notation for each element and map to mean its reduction in rational homology.

By definition of $y$, $f_*(a) = y$. Since $g_* \circ f_* = id_*$, $g_*(y) = a$. Let $g_*(y') = a'$ for some $a' \in H_{4n+2}(A;\mathbb{Q})$. By definition of $y'$, $da' = dg_*(y') = g_*(y) = a$. However, integrally, $a$ is a generator of a $\mathbb{Z}$ summand, and so $d=1$ and $a' = a$, which is a contradiction.
\end{proof}

Now we must show that each $\overline{y}_1,\cdots,\overline{y}_{m_{\overline{Y}}}$ is even. To do this, we first determine the rational homotopy type of $A$.

\begin{lemma}
    \label{lem:rathtpyA}
    Let $A$ be a space which retracts off $X \in \prod(\mathcal{P} \cup \mathcal{T})$. Suppose that the only generators in $H_*(A)$ which reduce to primitive generators in rational homology are in degrees of the form $4n+2$, where $n \geq 2$, $n \neq 3$. Then $A$ is rationally homotopy equivalent to a finite type product of loops on spheres of the form $\Omega S^{4n+3}$, where $n \geq 2$, $n \neq 3$.
\end{lemma}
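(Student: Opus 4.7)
The plan is to rationalise, recognise $X_{\mathbb{Q}}$ as a product of rational Eilenberg--MacLane spaces, and use the hypothesis on primitive generators to determine which factors appear in $A_{\mathbb{Q}}$.

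First, I would observe that every $T \in \mathcal{T}$ is rationally contractible. Indeed, each Moore space $P^{n}(p^{r})$ has trivial rational homology, so any wedge $W = \bigvee_{i} P^{n_{i}}(p_{i}^{r_{i}})$ is rationally contractible, and hence so are $\Omega W$ and each of its retracts. Writing $X \simeq S \times M$ with $S \in \prod\mathcal{P}$ and $M \in \prod\mathcal{T}$, this yields $X_{\mathbb{Q}} \simeq S_{\mathbb{Q}}$, so that $A_{\mathbb{Q}}$ is a rational retract of $S_{\mathbb{Q}}$. Next, applying the standard rational equivalences $S^{1}_{\mathbb{Q}} \simeq K(\mathbb{Q},1)$, $S^{3}_{\mathbb{Q}} \simeq K(\mathbb{Q},3)$, $S^{7}_{\mathbb{Q}} \simeq K(\mathbb{Q},7)$, $\Omega S^{2m+1}_{\mathbb{Q}} \simeq K(\mathbb{Q},2m)$, together with Serre's splitting $\Omega S^{2m}_{\mathbb{Q}} \simeq K(\mathbb{Q},2m-1) \times K(\mathbb{Q},4m-2)$, to each factor of $S$, the rationalisation $S_{\mathbb{Q}}$ is a simply connected rational $H$-space of finite type, and therefore so is $A_{\mathbb{Q}}$. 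By the classical theorem of Cartan--Serre, $A_{\mathbb{Q}}$ is homotopy equivalent to a finite type product $\prod_{j} K(\mathbb{Q}, n_{j})$.

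To finish, I would pin down the integers $n_{j}$ using the hypothesis. Each factor $K(\mathbb{Q}, n_{j})$ contributes a one-dimensional primitive subspace of $H_{*}(A;\mathbb{Q})$ in degree $n_{j}$ and nothing in other degrees, so the multiset $\{n_{j}\}$ records the degrees of a basis of the primitive submodule of $H_{*}(A;\mathbb{Q})$. Since $H_{*}(A;\mathbb{Q}) = H_{*}(A) \otimes \mathbb{Q}$, a non-zero rational primitive in degree $k$ can, after clearing denominators and dividing out the greatest common divisor of its coordinates in the free part of $H_{k}(A)$, be represented by a generator of a $\mathbb{Z}$-summand of $H_{k}(A)$ which reduces to a primitive generator in rational homology. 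By hypothesis this can only happen when $k = 4n+2$ with $n \geq 2$ and $n \neq 3$, so every $n_{j}$ is of this form. Using $K(\mathbb{Q}, 4n+2) \simeq \Omega S^{4n+3}_{\mathbb{Q}}$ then yields the desired equivalence $A_{\mathbb{Q}} \simeq \prod_{j} \Omega S^{4n_{j}+3}_{\mathbb{Q}}$. The main technical step is this last one, where one must confirm that the hypothesis on integral primitive generators truly captures every rational primitive; however, this is routine once one works with the sublattice of primitive classes in the free part of $H_{*}(A)$ and notes that rescaling preserves primitivity.
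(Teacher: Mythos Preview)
Your approach is correct and essentially identical to the paper's three-line argument: $A$ inherits an $H$-space structure as a retract of $X$, so rationally it decomposes as a product of Eilenberg--MacLane spaces (the paper phrases this as ``rationally, every $H$-space is homotopy equivalent to a product of spheres and loops on odd dimensional spheres''), and the hypothesis on primitives forces every factor to be $K(\mathbb{Q},4n+2)\simeq \Omega S^{4n+3}_{\mathbb{Q}}$. One small slip: since $S^{1}\in\mathcal{P}$, the space $S_{\mathbb{Q}}$ need not be simply connected, but this is harmless---the hypothesis rules out degree-$1$ primitives in $A$, so $A_{\mathbb{Q}}$ is simply connected and Cartan--Serre applies (or simply invoke the structure theorem for connected rational $H$-spaces directly, as the paper does).
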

\begin{proof}
    Since $A$ retracts off $X$, $A$ is an $H$-space. Rationally, every $H$-space is homotopy equivalent to a product of spheres and loops on odd dimensional spheres. Since $A$ only contains primitive generators in degrees of the form $4n+2$, $n \geq 2$, $n \neq 3$, there is a rational homotopy equivalence as claimed.
\end{proof}

Using this lemma, we can now show that each $\overline{y}_j$ must be even.

\begin{lemma}
    \label{lem:overlineyiseven}
    For $1 \leq j \leq m_{\overline{Y}}$, $\overline{y}_j$ is even. 
\end{lemma}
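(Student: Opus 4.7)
The plan is to combine naturality of the Hurewicz homomorphism with the classical Hopf invariant calculation. Since $n \geq 2$ and $n \neq 3$, the dimension $2n+2$ lies outside $\{2,4,8\}$, so by the Hopf invariant one theorem the free part of $\pi_{4n+3}(S^{2n+2}_j) \cong \pi_{4n+2}(\Omega S^{2n+2}_j)$ is infinite cyclic, generated by the Whitehead product $[\iota_j,\iota_j]$ (where $\iota_j$ denotes the identity class of $S^{2n+2}_j$), whose Hopf invariant is $\pm 2$. Equivalently, the Hurewicz image of the free part of $\pi_{4n+2}(\Omega S^{2n+2}_j)$ in $H_{4n+2}(\Omega S^{2n+2}_j;\mathbb{Z}) = \mathbb{Z}\langle\overline{\gamma}_j\rangle$ is contained in $2\mathbb{Z}\cdot\overline{\gamma}_j$.

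Since $A$ is an $H$-space (inherited from the retract structure off $X$) and by Lemma~\ref{lem:rathtpyA} is rationally a finite product of loops on odd spheres $\Omega S^{4n'+3}$, the rational Hurewicz map $\pi_{4n+2}(A)\otimes\mathbb{Q}\to H_{4n+2}(A;\mathbb{Q})$ is surjective onto the primitives. Since $a$ reduces to a primitive generator rationally and is a generator of a $\mathbb{Z}$-summand of $H_{4n+2}(A)$, the plan is to construct an integer Hurewicz lift $\hat{a}\in\pi_{4n+2}(A)$ with $h_A(\hat{a}) = a + t$ for some torsion class $t \in H_{4n+2}(A)$. Naturality of the Hurewicz then yields
\[
h_X(f_*(\hat{a})) \;=\; f_*(h_A(\hat{a})) \;=\; f_*(a) + f_*(t),
\]
whose free part has $\overline{\gamma}_j$-coefficient equal to $\overline{y}_j$, since $f_*(t)$ is torsion and $H_{4n+2}(\Omega S^{2n+2}_j;\mathbb{Z})$ is torsion-free. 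On the other hand, the projection of $f_*(\hat{a})\in\pi_{4n+2}(X)$ onto $\pi_{4n+2}(\Omega S^{2n+2}_j)$ lies in the free part analyzed in the first paragraph, so its Hurewicz image has $\overline{\gamma}_j$-coefficient in $2\mathbb{Z}$. Therefore $\overline{y}_j \in 2\mathbb{Z}$.

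The principal technical obstacle is constructing the Hurewicz lift $\hat{a}$ with $h_A(\hat{a}) = a + t$ exactly, rather than merely $Na + t$ for some integer $N > 1$, which is all a naive rational-Hurewicz argument guarantees and which would only show that $N\overline{y}_j$ is even. To resolve this, the idea is to use the retract structure of $A$ inside $X$: the Hurewicz image of $A$ coincides with $g_*$ applied to the Hurewicz image of $X$, and in particular $g_*$ of each $\gamma_i$ lies in $h_A(\pi_{4n+2}(A))$, while $2g_*(\overline{\gamma}_j)$ does. Combined with the identity $a = g_*(f_*(a))$ and the hypothesis that all primitive generators of $H_*(A;\mathbb{Q})$ occur in degrees of the form $4n+2$, this rules out any $2$-divisibility obstruction on the $\mathbb{Z}$-summand generated by $a$ and produces the required integer lift.
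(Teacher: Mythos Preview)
Your overall strategy --- reduce to a Hurewicz/Hopf-invariant computation --- is the same idea the paper uses, and your first paragraph is correct: since $2n+2\notin\{2,4,8\}$, the Hurewicz image of $\pi_{4n+2}(\Omega S^{2n+2}_j)$ in $H_{4n+2}(\Omega S^{2n+2}_j;\mathbb{Z})\cong\mathbb{Z}$ is exactly $2\mathbb{Z}$. The problem is the second half. You correctly identify the construction of an integer Hurewicz lift $\hat a$ with $h_A(\hat a)=a+(\text{torsion})$ as the principal technical obstacle, but the resolution you sketch is circular. From
\[
a \;=\; g_*f_*(a) \;=\; \sum_i y_i\,g_*(\gamma_i)\;+\;\sum_j \overline y_j\,g_*(\overline\gamma_j)\;+\;g_*(t)
\]
you only know that each $g_*(\gamma_i)$ and each $2g_*(\overline\gamma_j)$ lies in the Hurewicz image of $A$. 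To conclude that $a$ itself does (modulo torsion) you would need either that each $\overline y_j$ is already even --- which is precisely the statement to be proved --- or that $g_*(\overline\gamma_j)$ is in the Hurewicz image of $A$, which you have not shown (and cannot show directly: $\overline\gamma_j$ is \emph{not} in the Hurewicz image of $X$). The hypothesis on the degrees of the rational primitive generators gives only Lemma~\ref{lem:rathtpyA}, i.e.\ rational information; it does not by itself remove a $2$-divisibility obstruction integrally. As written, your argument proves only that $2\overline y_j$ is even.

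The paper fills exactly this gap by localising at $2$ and invoking Proposition~\ref{prop:cancellation} to obtain an explicit $2$-local product decomposition
\[
A_{(2)}\;\simeq\;\prod_{i=1}^{l}\Omega S^{4n+3}_{(2)}\times S'\times T,
\]
with $S'$ a product of $2$-local loop spaces $\Omega S^{m}$ for $m\neq 4n+3$ and $T$ torsion. The hypothesis on rational primitives (via Lemma~\ref{lem:rathtpyA}) rules out any $\Omega S^{2k}$ factors in $S'$, so the Hurewicz lift of $a$ is available through the bottom cells of the $\Omega S^{4n+3}_{(2)}$ factors. The paper then packages the Hopf-invariant step as an atomicity argument: extend the lift to $\overline\nu'\colon\Omega S^{4n+3}_{(2)}\to A_{(2)}$ by the James construction, compose with $f_{(2)}$, project to $\Omega S^{2n+2}_{(2)}$, and postcompose with the James--Hopf map $J$. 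This gives a self-map of $\Omega S^{4n+3}_{(2)}$ of degree $\pm\overline y_j$ on the bottom cell; if $\overline y_j$ were odd this would be a $2$-local equivalence by atomicity, forcing $\Omega S^{4n+3}_{(2)}$ to retract off the indecomposable space $\Omega S^{2n+2}_{(2)}$, a contradiction. The $2$-local decomposition of $A$ from Proposition~\ref{prop:cancellation} is the missing ingredient in your approach.
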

\begin{proof}
    By Lemma ~\ref{lem:rathtpyA}, there is a rational homotopy equivalence $A_{\mathbb{Q}} \simeq \prod_{i \in \mathcal{I}} \Omega S^{4n_i+3}_{\mathbb{Q}}$ for some indexing set $\mathcal{I}$. Now localise at the prime $2$, by Proposition ~\ref{prop:cancellation}, there is a $2$-local homotopy equivalence \[A_{(2)} \simeq \prod_{i=1}^l \Omega S^{4n+3}_{(2)} \times S' \times  T,\] where $l \geq 1$, $S'$ is a product of $2$-local loops on spheres of the form $\Omega S^m$, where $m \neq 4n+3$, and $T \in \mathcal{T}$ is a product of indecomposable, $2$-torsion spaces. Recall that $a \in H_{4n+2}(A)$ is a generator reducing to a primitive generator in rational homology, and $f_*(a) = \sum_{i=1}^{m_Y} y_i\gamma_i + \sum_{j=1}^{m_{\overline{Y}}} \overline{y}_j \overline{\gamma}_j + t$, where $t$ has finite order. The Hurewicz theorem implies that there is a map $\nu:S^{4n+2} \rightarrow A_{(2)}$ such that in homology, $\nu$ sends a generator $\lambda$ of $H_{4n+2}(S^{4n+2})$ to $a$. By the universal property of the James construction \cite{J}, this extends to a map $\nu':\Omega S^{4n+3} \rightarrow A_{(2)}$, which sends a generator $\lambda'$ of $H_{4n+2}(\Omega S^{4n+3})$ to $a$. Finally, by the universal property of localisation, there exists a map $\overline{\nu}':\Omega S^{4n+3}_{(2)} \rightarrow A_{(2)}$ which sends $\lambda'$ to $a$. 

    Consider the composite \[\psi: \Omega S^{4n+3}_{(2)} \xrightarrow{\overline{\nu}'} A_{(2)} \xrightarrow{f_{(2)}} X_{(2)} \xrightarrow{\pi_j} \Omega S^{2n_j+2}_{(2)},\] where $\pi_j$ is the projection onto the loops on a sphere corresponding to $\overline{\gamma}_j$, and $n = n_j$. Suppose that $\overline{y}_j$ is odd. By definition, $\psi_*$ sends the generator $\lambda'$ to $\overline{y}_j\overline{\gamma}_j$. Let $J: \Omega S^{2n_j+2} \rightarrow \Omega S^{4n+3}$ be the $2^{nd}$ James-Hopf invariant. Consider the composite $\psi' = J_{(2)} \circ \psi: \Omega S^{4n+3}_{(2)} \rightarrow \Omega S^{4n+3}_{(2)}$. As recounted in \cite[Lemma 2.6]{S} for example, in homology, $J_*$ induces an isomorphism in degree $4n+2$, and so $\psi'_*$ sends $\lambda'$ to $\pm\overline{y}_j \lambda'$. Since $\overline{y}_j$ is odd, $\psi'_*$ is an isomorphism localised at $2$. By \cite[Corollary 5.2]{CPS}, $\Omega S^{4n+3}$ is atomic localised at the prime $2$, meaning that any self-map which is an isomorphism in the bottom non-trivial degree in homology localised at the prime $2$ is a $2$-local homotopy equivalence. The map $\psi'$ factors through $\Omega S^{2n_j+2}_{(2)}$, implying that $\Omega S^{4n+3}_{(2)}$ retracts off $\Omega S^{2n_j+2}_{(2)}$. However, since $n_j \notin \{0,1,3\}$, \cite[Corollary 5.2]{CPS} implies that $\Omega S^{2n_j+2}_{(2)}$ is also atomic, and atomic spaces are indecomposable. Hence, $\overline{y}_j$ must be even.
\end{proof}

Combining Lemma ~\ref{lem:gcdisone} and Lemma ~\ref{lem:overlineyiseven}, $\sum_{i=1}^{m_Y} y_i\gamma_i + \sum_{j=1}^{m_{\overline{Y}}} \overline{y}_j \overline{\gamma}_j$ is an element in the image of $(\phi')_{4n+2}$ as described in Subsection ~\ref{subsec:setupgeneral}. Therefore, we can use the argument in \cite[Subsection 3.3]{S} to show the following.
\begin{lemma}
    \label{lem:case2restractspheres}
    Let $X \in \prod(\mathcal{P} \cup \mathcal{T})$, and $A$ be a space which retracts off $X$. Suppose that the only generators in $H_*(A)$ which reduce to primitive generators in rational homology are in degrees $4n+2$, $n \geq 2$, $n \neq 3$.  There is a homotopy equivalence \[A \simeq P \times T,\] where $P$ is a product of loops on spheres of the forms $\Omega S^{4n+3}$, where $n \geq 2$, $n \neq 3$, and the homology of $T$ is torsion. \qedno
\end{lemma}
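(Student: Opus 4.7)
The plan is to follow the argument of \cite[Subsection 3.3]{S} verbatim, feeding in the numerical data assembled in Lemmas \ref{lem:phinonzero}, \ref{lem:gcdisone} and \ref{lem:overlineyiseven}. Fix $n \geq 2$ with $n \neq 3$, and let $a \in H_{4n+2}(A)$ be a generator of a $\mathbb{Z}$ summand reducing to a primitive in rational homology. By Lemma \ref{lem:phinonzero}, the free part $x = \sum_i y_i\gamma_i + \sum_j \overline{y}_j\overline{\gamma}_j$ of $f_*(a)$ lies in $\text{Im}((\phi')_{4n+2})$; by Lemma \ref{lem:gcdisone} the greatest common divisor of these coefficients is $1$, and by Lemma \ref{lem:overlineyiseven} each $\overline{y}_j$ is even. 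Writing $\overline{y}_j = 2\overline{y}'_j$, these are exactly the numerical hypotheses used in loc.\ cit.

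Given $x$, I would construct maps $\rho\colon \Omega S^{4n+3} \to X$ and $\rho'\colon X \to \Omega S^{4n+3}$ exactly as in \cite[Subsection 3.3]{S}, using iterated suspension comultiplications, degree maps, and second James--Hopf invariants on the $\Omega S^{2n+2}$ factors; the evenness of each $\overline{y}_j$ is what allows the $\Omega S^{2n+2}$ contribution to the $\Omega S^{4n+3}$-image of $\gamma$ to be realized through the James--Hopf invariant. Since $x \in \text{Im}(\phi'_*)$ and $\phi'_*$ is idempotent, $\phi_*(x) - x$ is a torsion class; because $\rho'$ lands in the torsion-free group $H_{4n+2}(\Omega S^{4n+3})$, this discrepancy is invisible after applying $\rho'_*$, and the composite
\[e\colon \Omega S^{4n+3} \xrightarrow{\rho} X \xrightarrow{\phi} X \xrightarrow{\rho'} \Omega S^{4n+3}\]
induces an isomorphism on $H_{4n+2}$. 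Since $n \notin \{0,1,3\}$, by \cite[Corollary 5.2]{CPS} the space $\Omega S^{4n+3}$ is $p$-locally atomic at every prime $p$, so $e$ is a homotopy equivalence; as $\phi = f \circ g$ factors through $A$, this exhibits $\Omega S^{4n+3}$ as a retract of $A$.

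Iterating this construction over every primitive generator produces, in finitely many steps in each degree, a homotopy equivalence $A \simeq P \times T$ with $P$ of the asserted form and $H_*(T;\mathbb{Q})$ containing no primitive generators. Since $T$ is a retract of $A$ it is an $H$-space still satisfying the hypothesis of Lemma \ref{lem:rathtpyA}; with no primitive generators left to support any factors, that lemma forces $H_*(T;\mathbb{Q}) = 0$, so $H_*(T)$ is torsion. The main technical subtlety beyond \cite[Subsection 3.3]{S} is the replacement of ``$x \in \text{Im}(\phi_*)$'' there by ``$x \in \text{Im}(\phi'_*)$'' here; this weaker conclusion suffices precisely because $\rho'$ has torsion-free target, so a residual torsion term in $\phi_*(x)$ cannot perturb the bottom-degree homology computation that drives the atomicity argument.
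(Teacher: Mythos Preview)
Your proposal is correct and follows exactly the approach the paper intends: the paper's ``proof'' is simply the sentence ``Combining Lemma~\ref{lem:gcdisone} and Lemma~\ref{lem:overlineyiseven}\ldots we can use the argument in \cite[Subsection~3.3]{S},'' and you have faithfully unpacked what that entails. Your explicit discussion of the $\phi$ versus $\phi'$ discrepancy (namely, that $\phi_*(x)-x$ is torsion and hence annihilated by $\rho'_*$) is a genuine clarification of a point the paper leaves implicit when it invokes \cite{S} in the presence of torsion factors.
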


\subsection{Conclusion of proof}

Returning to the general case, let $X \in \prod(\mathcal{P} \cup \mathcal{T})$, and $A$ be a space which retracts off $X$. By Lemma ~\ref{lem:case1restractspheres}, there is a homotopy equivalence $A \simeq P \times A'$, where $P$ is a product of spheres and loops on spheres of the forms $S^{n}$, where $n \in \{1,3,7\}$, and $\Omega S^{m+1}$, where $m \in \{2,6,14,4m,2l-1\:|\: m \geq 1,l \notin \{1,2,4\}\}$. Moreover, the only generators in $H_*(A')$ which reduce to primitive generators in rational homology are in degrees of the form $4k+3$, where $k \geq 2$ and $k \neq 3$. Lemma ~\ref{lem:case2restractspheres} then implies there is a homotopy equivalence  $A' \simeq P' \times T$ where $P'$ is a product of loops on spheres of the forms $\Omega S^{4n+3}$, where $n \geq 2$, $n \neq 3$, and the homology of $T$ is torsion. Combining these, we obtain a homotopy equivalence \[A \simeq P \times P' \times T,\] where $P,P' \in \prod\mathcal{P}$, and the homology of $T$ is torsion. Since $A$ retracts off $X$, $T$ retracts off $X$, and Theorem ~\ref{thm:Tclosedunderretracts} implies that $T \in \prod\mathcal{T}$. Summarising, we have obtained the following result.

\begin{theorem}
    \label{thm:PcupTclosedunderretracts}
    Let $X \in \prod(\mathcal{P} \cup \mathcal{T})$ and $A$ be a space which retracts off $X$. Then $A \in \prod(\mathcal{P} \cup \mathcal{T})$. \qedno
\end{theorem}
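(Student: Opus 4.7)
The plan is to combine the case analysis already established in Lemma~\ref{lem:case1restractspheres} and Lemma~\ref{lem:case2restractspheres} with the torsion cancellation result of Theorem~\ref{thm:Tclosedunderretracts}. The overall strategy is to peel off in sequence those factors of $A$ that come from $\mathcal{P}$ (first handling the ``easy'' degrees, then the more delicate degrees of the form $4n+2$), until only a rationally trivial remainder is left, which must then lie in $\prod\mathcal{T}$.

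First, apply Lemma~\ref{lem:case1restractspheres} to $A$, using that $A$ retracts off $X \in \prod(\mathcal{P} \cup \mathcal{T})$. This yields a homotopy equivalence
\[A \simeq P \times A',\]
where $P$ is a product of factors of the form $S^n$ ($n \in \{1,3,7\}$) and $\Omega S^{m+1}$ ($m \in \{2,6,14,4m,2l-1\mid m\geq 1,\, l\notin\{1,2,4\}\}$), and the only generators in $H_*(A')$ reducing to primitive generators in rational homology lie in degrees of the form $4k+2$, $k \geq 2$, $k \neq 3$. Since $A'$ retracts off $A$, and $A$ retracts off $X$, transitivity of retractions gives that $A'$ still retracts off $X \in \prod(\mathcal{P} \cup \mathcal{T})$. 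Hence Lemma~\ref{lem:case2restractspheres} applies to $A'$, producing a further homotopy equivalence
\[A' \simeq P' \times T,\]
where $P'$ is a product of loops on spheres $\Omega S^{4n+3}$, $n \geq 2$, $n \neq 3$, and $H_*(T)$ is torsion.

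Combining these decompositions gives
\[A \simeq P \times P' \times T,\]
with $P, P' \in \prod\mathcal{P}$. Finally, since $T$ is a factor of $A$ and $A$ retracts off $X$, we find that $T$ retracts off $X \in \prod(\mathcal{P} \cup \mathcal{T})$, and $H_*(T)$ is torsion by construction. Theorem~\ref{thm:Tclosedunderretracts} then forces $T \in \prod\mathcal{T}$, and therefore $A \in \prod(\mathcal{P} \cup \mathcal{T})$, as required.

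No step here is a genuine obstacle because all the technical work has been carried out in the two case lemmas and in Theorem~\ref{thm:Tclosedunderretracts}; the only subtlety to verify carefully is the transitivity of the retraction property, namely that if $A \simeq P \times A'$ retracts off $X$, then the factor $A'$ (and subsequently the factor $T$) likewise retracts off $X$. This is immediate: composing the retraction $X \to A$ with the projection $A \to A'$ (using a fixed homotopy equivalence $A \simeq P \times A'$), and postcomposing the inclusion $A' \hookrightarrow A$ with the section $A \to X$, produces the required maps whose composition is homotopic to the identity on $A'$.
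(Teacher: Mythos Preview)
Your proof is correct and follows essentially the same argument as the paper's own proof in the ``Conclusion of proof'' subsection: apply Lemma~\ref{lem:case1restractspheres}, then Lemma~\ref{lem:case2restractspheres}, then Theorem~\ref{thm:Tclosedunderretracts} to the torsion remainder. Your explicit remark on the transitivity of retractions is a welcome addition that the paper leaves implicit.
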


\section{Preliminary decompositions of Moment-angle complexes}
\label{subsec:relationsbetweenWandP}

In this section, we prove some relations between spaces in $\bigvee(\mathcal{W} \cup \mathcal{M})$ and $\prod(\mathcal{P} \cup \mathcal{T})$. These will be generalisations of the relations between spaces in $\bigvee\mathcal{W}$ and $\prod\mathcal{P}$ shown in \cite[Subsection 2.5]{S}. Before proving these relations, we require a result of \cite[Corollary 6.6]{N3}, which gives a wedge decomposition for certain smash products of Moore spaces.

\begin{lemma}
    \label{lem:smashofMoorespaces}
    Let $p$ and $q$ be primes, and $r,s \geq 1$ such that $\max\{p^r,q^s\} > 2$. If $p \neq q$, then $P^n(p^r) \wedge P^m(q^s)$ is contractible. If $p = q$, there is a homotopy equivalence \[P^n(p^r) \wedge P^m(p^s) \simeq P^{n+m}(p^{\min\{r,s\}}) \vee P^{n+m-1}(p^{\min\{r,s\}}).\] \qed
\end{lemma}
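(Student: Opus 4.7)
The plan is to split into the two cases and handle each via a combination of the K\"unneth formula and an explicit cofibre-sequence analysis.

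For $p \neq q$: Since $\tilde H_*(P^n(p^r))$ is $\mathbb{Z}/p^r$ concentrated in a single degree (and similarly for $P^m(q^s)$), the K\"unneth theorem shows that $\tilde H_*(P^n(p^r) \wedge P^m(q^s))$ is built from $\mathbb{Z}/p^r \otimes \mathbb{Z}/q^s$ and $\mathrm{Tor}(\mathbb{Z}/p^r, \mathbb{Z}/q^s)$, both of which vanish since $\gcd(p^r,q^s)=1$. As $P^n(p^r) \wedge P^m(q^s)$ is a simply-connected CW complex with trivial reduced homology, Hurewicz and Whitehead together force it to be contractible.

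For $p = q$, set $t = \min\{r,s\}$. Smashing the cofibre sequence $S^{n-1} \xrightarrow{p^r} S^{n-1} \to P^n(p^r)$ with $P^m(p^s)$ identifies $P^n(p^r) \wedge P^m(p^s)$ as the cofibre of the self-map of $\Sigma^{n-1}P^m(p^s) \simeq P^{n+m-1}(p^s)$ given by multiplication by $p^r$. The plan is then to analyse this self-map case-by-case. When $r \geq s$ (so $t=s$) it is null-homotopic, whence the cofibre is $P^{n+m-1}(p^s) \vee \Sigma P^{n+m-1}(p^s) = P^{n+m-1}(p^t) \vee P^{n+m}(p^t)$. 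When $r < s$ (so $t=r$) one uses a Bockstein-type decomposition of the $p^r$-self-map of $P^{n+m-1}(p^s)$ to again split the cofibre as $P^{n+m-1}(p^t) \vee P^{n+m}(p^t)$.

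The main obstacle, and the real content of the proof, is the analysis of this $p^r$-self-map. The crucial ingredients are the existence of a homotopy commutative co-H-space structure on $P^k(p^s)$ and the fact that $p^s$ is null-homotopic on it. Both of these hold when $p$ is odd for any $r,s \geq 1$, and when $p=2$ provided $\max\{r,s\} \geq 2$; together these conditions are precisely $\max\{p^r,p^s\} > 2$. In the excluded case $p^r = p^s = 2$, the co-H-space structure is not homotopy commutative (a mod-$2$ Hopf invariant obstructs it), and the clean decomposition fails. This full analysis is worked out in \cite[Corollary 6.6]{N3}, whose statement we invoke.
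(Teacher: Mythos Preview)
Your proposal is correct and ultimately rests on the same source the paper does: the lemma in the paper is stated without proof and simply cites \cite[Corollary 6.6]{N3}. You go further by sketching the argument behind that citation (K\"unneth for the $p\neq q$ case, and the cofibre analysis of the $p^r$-self-map on $P^{n+m-1}(p^s)$ for the $p=q$ case), which is accurate and helpfully identifies why the hypothesis $\max\{p^r,q^s\}>2$ is needed, but since the paper treats this as a black-box citation your level of detail exceeds what is required.
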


\begin{lemma}
    \label{lem:suspensionofTisinW'}
    The following hold:
    \begin{enumerate}
        \item let $A$ be a space in $\prod\mathcal{T}$, then $\Sigma A \in \bigvee\mathcal{M}$;
        \item let $A \in \bigvee(\mathcal{W} \cup \mathcal{M})$, then $\Omega A \in \prod(\mathcal{P} \cup \mathcal{T})$;
        \item let $A \in \prod(\mathcal{P} \cup \mathcal{T})$, then $\Sigma A \in \bigvee(\mathcal{W} \cup \mathcal{M})$;
        \item let $X$ be a space such that $\Sigma X \in \bigvee(\mathcal{W} \cup \mathcal{M})$, and let $A_1,\cdots,A_m$ be spaces in $\prod(\mathcal{P} \cup \mathcal{T})$, then $\Sigma (X \wedge A_1 \wedge \cdots A_m) \in \bigvee(\mathcal{W} \cup \mathcal{M})$;
        \item let $X$ and $Y$ are spaces such that $\Sigma X \in \bigvee(\mathcal{W} \cup \mathcal{M})$, and $\Omega Y \in \prod(\mathcal{P} \cup \mathcal{T})$, then we obtain $\Omega (X \ltimes Y) \in \prod(\mathcal{P} \cup \mathcal{T})$;
        \item let $X_1,\cdots,X_m$ be spaces such that $\Omega X_i \in \prod(\mathcal{P} \cup \mathcal{T)}$, then $\Omega (\bigvee_{i=1}^m X_i) \in \prod(\mathcal{P} \cup \mathcal{T})$.
    \end{enumerate}
\end{lemma}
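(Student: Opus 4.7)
The six parts are interrelated and I would prove them in a coordinated order, following the bootstrapping scheme of \cite[Subsection 2.5]{S} adapted to accommodate Moore-space summands and the torsion factors in $\mathcal{T}$; the order I would use is $(2), (1), (3), (4), (6), (5)$.

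For (2), I would apply the Hilton-Milnor theorem to decompose $\Omega(\bigvee_i X_i)$ as a weak product of $\Omega\Sigma$'s of iterated smash products of the $X_i$ over a Hall basis. Each smash product of spheres is a sphere, contributing a factor $\Omega S^n$; the low-dimensional cases $\Omega S^2, \Omega S^4, \Omega S^8$ further decompose as $S^1 \times \Omega S^3$, $S^3 \times \Omega S^7$, $S^7 \times \Omega S^{15}$ via the Hopf fibrations and so lie in $\prod\mathcal{P}$. Any smash containing a Moore-space factor is handled by Lemma ~\ref{lem:smashofMoorespaces} together with the definition of $\mathcal{M}$: it reduces to a wedge of Moore spaces, and $\Omega$ of such a wedge lies in $\prod\mathcal{T}$ by the definition of $\mathcal{T}$.

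Part (1) uses that each $T \in \mathcal{T}$ is by definition a retract of $\Omega(\bigvee_i P^{n_i}(p_i^{r_i}))$. The James splitting $\Sigma\Omega\Sigma Z \simeq \bigvee_{k \geq 1} \Sigma Z^{\wedge k}$ (with $Z = \bigvee_i P^{n_i - 1}(p_i^{r_i})$), combined with Lemma ~\ref{lem:smashofMoorespaces} and the definition of $\mathcal{M}$, shows $\Sigma\Omega(\bigvee_i P^{n_i}(p_i^{r_i})) \in \bigvee\mathcal{M}$, and Theorem ~\ref{thm:WcupMclosedunderretracts} then gives $\Sigma T \in \bigvee\mathcal{M}$. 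The product case follows by expanding $\Sigma(\prod T_i) \simeq \bigvee_{\emptyset \neq S \subseteq I} \Sigma \bigwedge_{i \in S} T_i$ and iterating. Part (3) is analogous: split $\Sigma A$ for $A \in \prod(\mathcal{P}\cup\mathcal{T})$ into a wedge of $\Sigma$'s of smash products; each smash of $\mathcal{P}$-factors is handled via iterated James splittings, and smashes involving $\mathcal{T}$-factors via part (1). Part (4) reduces to the case of a single factor $A \in \mathcal{P} \cup \mathcal{T}$ by the same product-suspension splitting; then $\Sigma(X \wedge A) = \Sigma X \wedge A$, and the hypothesis on $\Sigma X$ lets us analyse each summand as an iterated suspension of $A$ (handled by (3)) or as a mapping cone of a degree map on such a suspension.

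Part (6) uses the James-type splitting $\Omega(X_1 \vee X_2) \simeq \Omega X_1 \times \Omega X_2 \times \Omega\Sigma(\Omega X_1 \wedge \Omega X_2)$: parts (3) and (4) give $\Sigma(\Omega X_1 \wedge \Omega X_2) \in \bigvee(\mathcal{W}\cup\mathcal{M})$, and then (2) yields the conclusion; iterate for larger wedges. For (5), the retraction $X \ltimes Y \to Y$ induced by the projection $X \times Y \to Y$ admits $Y \hookrightarrow X \ltimes Y$ as a section, and so splits the looped fibration to give $\Omega(X \ltimes Y) \simeq \Omega Y \times \Omega F$ with $F$ the homotopy fibre; identifying $F$ through the cofibration $Y \to X \ltimes Y \to X \wedge Y$ shows $\Sigma F \in \bigvee(\mathcal{W}\cup\mathcal{M})$ by part (4), so part (2) gives $\Omega F \in \prod(\mathcal{P}\cup\mathcal{T})$. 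The main obstacle throughout is the $2$-primary case: a smash of mod-$2$ Moore spaces does not split as a wedge of Moore spaces on the nose, so one must ensure that every such smash appears under at least one suspension, letting the indecomposable summands absorbed into the definition of $\mathcal{M}$ take care of them.
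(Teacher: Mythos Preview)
Your overall scheme---Hilton--Milnor for (2) and (6), James splitting for (1) and (3), product-to-wedge expansion throughout, and the bootstrap order borrowed from \cite{S}---matches the paper's approach. Parts (1), (3), (4), (6) are essentially as the paper does them (the paper simply cites \cite[Lemma 2.13, Lemma 2.14, Corollary 2.16]{S} for (4)--(6)).

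There is, however, a genuine gap in your treatment of part (2) at the prime $2$. After Hilton--Milnor you face factors of the form $\Omega\Sigma(A_1\wedge\cdots\wedge A_k)$ with each $A_i$ a sphere or a Moore space; you assert that when a Moore space is present the smash ``reduces to a wedge of Moore spaces'' via Lemma~\ref{lem:smashofMoorespaces}. But that lemma explicitly requires $\max\{p^r,q^s\}>2$, so it says nothing when every factor is $P^{n}(2)$, and in that case the smash genuinely need not split as a wedge of Moore spaces. Your closing disclaimer proposes to handle this by noting that the smash sits under a suspension, so $\Sigma(P^{n_1}(2)\wedge\cdots\wedge P^{n_l}(2))\in\bigvee\mathcal{M}$; but that does not finish (2). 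What you need is $\Omega\Sigma(P^{n_1}(2)\wedge\cdots\wedge P^{n_l}(2))\in\prod\mathcal{T}$, and $\mathcal{T}$ is defined in terms of loops on wedges of \emph{Moore spaces}, not loops on arbitrary wedges in $\mathcal{M}$. Invoking (2) again here would be circular.

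The paper closes this gap with an extra idea you are missing: apply Hilton--Milnor once more to $\Omega\Sigma\bigl(\bigvee_{i=1}^{l}P^{n_i}(2)\bigr)$ and observe that a Hall-basis bracket involving each index exactly once produces precisely the factor $\Omega\Sigma\bigl(P^{n_1}(2)\wedge\cdots\wedge P^{n_l}(2)\bigr)$. Hence this space retracts off $\Omega\bigl(\bigvee_i P^{n_i+1}(2)\bigr)$, which lies in $\prod\mathcal{T}$ by the very definition of $\mathcal{T}$, and Theorem~\ref{thm:Tclosedunderretracts} (closure of $\prod\mathcal{T}$ under retracts with torsion homology) then yields the conclusion. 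This retraction-plus-closure manoeuvre is the step your sketch lacks.
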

\begin{proof}
    For part $(1)$, since $A \in \prod\mathcal{T}$, there is a homotopy equivalence $A \simeq \prod_{i \in \mathcal{I}} T_i$, where each $T_i$ is an indecomposable space in the loop space decomposition of $\Omega (\bigvee_{j \in \mathcal{J}} P^{n_i}(p_i^{r_i}))$. In particular, $\Sigma T_i$ retracts off $\Sigma \Omega (\bigvee_{j \in \mathcal{J}} P^{n_i}(p_i^{r_i}))$. Since each $P^{n_i}(p^r)$ is a suspension, the James splitting implies that there is a homotopy equivalence \begin{equation}\label{eqn:JameswedgeofMoore}\Sigma \Omega (\bigvee\limits_{j \in \mathcal{J}} P^{n_i}(p_i^{r_i})) \simeq \bigvee\limits_{j \geq 1} (\bigvee\limits_{j \in \mathcal{J}} P^{n_i}(p_i^{r_i}))^{\wedge j}.\end{equation} Distributing the wedge sum over the smash, we obtain a wedge of spaces which are smashes of Moore spaces, where by Lemma ~\ref{lem:smashofMoorespaces}, each wedge summand consists of Moore spaces of a fixed prime. By Lemma ~\ref{lem:smashofMoorespaces}, if $p^r \neq 2$, then the smash products decompose further as a wedge of Moore spaces. If $p^r = 2$, then by Proposition ~\ref{prop:uniquedecomp}, this decomposes as a finite type wedge of indecomposable spaces which appear in the unique decomposition of smashes of mod $2$ Moore spaces. Therefore, by Theorem ~\ref{thm:WcupMclosedunderretracts} and by definition of $\mathcal{M}$, $\Sigma T_i \in \bigvee(\mathcal{W} \cup \mathcal{M})$. Hence, iterating the homotopy equivalence $\Sigma (X \times Y) \simeq \Sigma X \vee \Sigma Y \vee \Sigma (X \wedge Y)$, and shifting the suspension coordinate, we obtain that $\Sigma A \in \bigvee\mathcal{M}$.

    For part $(2)$, write $A$ as \[A \simeq \bigvee\limits_{i\in \mathcal{I}} S^{n_i} \vee \bigvee\limits_{j \in \mathcal{J}} P^{m_j}(p_j^{r_j}).\] The Hilton Milnor theorem implies there is a homotopy equivalence \[\Omega A \simeq \prod\limits_{k \in \mathcal{K}} \Omega \Sigma (A_{1} \wedge \cdots \wedge A_k),\] where $\mathcal{K}$ is some indexing set, and each $A_i$ is either a sphere or a Moore space. 
    
    Consider the term $A'_k = \Omega \Sigma (A_{1} \wedge \cdots \wedge A_k)$. If each $A_l$ is a sphere, then $A'_k$ is homeomorphic to the loops on a sphere. If there exists an $A_l$ which is a Moore space, then by Lemma ~\ref{lem:smashofMoorespaces}, $A'_k$ is either contractible, the loops on a wedge of Moore spaces of the form $P^{n_j}(p^r)$ for $n_j \geq 3$, $p$ a fixed prime, and $r \geq 1$ fixed, or the smash product of mod $2$ Moore spaces. In the first two cases, it is clear by definition of $\mathcal{T}$ that $A'_k \in \prod\mathcal{T}$. For the latter case, consider $\Omega \Sigma (P^{n_1}(2) \wedge \cdots \wedge P^{n_l}(2))$, where $l \geq 2$, and $n_i \geq 2$. The Hilton-Milnor thoerem implies there is a homotopy equivalence \[\Omega\Sigma \left(\bigvee\limits_{i=1}^l P^{n_i}(2)\right) \simeq \prod\limits_{i \in B}\Omega\Sigma(P^{n_1}(2)^{\wedge{b(1)}} \wedge \cdots \wedge P^{n_l}(2)^{\wedge b(l)}),\] where $B$ is a Hall basis of the free ungraded Lie algebra on $\{1,\cdots,l\}$ over $\mathbb{Z}$, and $b(i)$ is the number of times $i$ appears in the bracket $b$. By construction of a Hall basis (see \cite[p.120]{N1} for example), the smash product $\Omega\Sigma(P^{n_1}(2) \wedge \cdots \wedge P^{n_l}(2))$ appears as a product term, and so $\Omega\Sigma (P^{n_1}(2) \wedge \cdots \wedge P^{n_l}(2))$ retracts off $\Omega\Sigma (\bigvee_{i=1}^l P^{n_i}(2))$. Therefore, Theorem ~\ref{thm:Tclosedunderretracts} implies $\Omega\Sigma (P^{n_1}(2) \wedge \cdots \wedge P^{n_l}(2)) \in \prod\mathcal{T}$. Hence, each $A'_k \in \prod\mathcal{T}$, and so we obtain that $\Omega A \in \prod(\mathcal{P} \cup \mathcal{T})$.

    For part $(3)$, write $A$ as \[A \simeq \prod\limits_{i\in \mathcal{I}} S^{n_i} \times \prod\limits_{j \in \mathcal{J}} \Omega S^{m_i} \times \prod\limits_{k \in \mathcal{K}} T_i,\] where $T_i \in \mathcal{T}$. Iterating the homotopy equivalence $\Sigma(X \times Y) \simeq \Sigma X \vee \Sigma Y \vee \Sigma (X \vee Y)$, we obtain a homotopy equivalence \begin{equation}\label{eqn:sigmaAasawedge}\Sigma A \simeq \bigvee\limits_{l \in \mathcal{L}} \Sigma (A_1 \wedge \cdots \wedge A_l),\end{equation} where each $A_i$ is either a sphere, the loops on a sphere, or some $T_j$. Consider $A'_l = \Sigma (A_1 \wedge \cdots \wedge A_l)$. By the James construction, $\Sigma \Omega S^n \in \bigvee\mathcal{W}$, and by part (a), each $\Sigma T_k \in \bigvee\mathcal{M}$. By shifting the suspension coordinate, we can decompose $A'_l$ as a wedge of spaces $W'_{l'}$, where each $W'_{l'}$ is the suspension of a smash product of spheres and Moore spaces. If each space is a sphere, then $W'_l$ is a sphere. If there is a Moore space, then by Lemma ~\ref{lem:smashofMoorespaces}, $W_{l'} $ can be decomposed further as a wedge of spaces, each of which is either a Moore space, or a smash of mod $2$ Moore spaces. In either case, by definition of $\mathcal{M}$, $W_{l'} \in \bigvee\mathcal{M}$, and so $A'_l \in \bigvee(\mathcal{W} \cup \mathcal{M})$. Therefore, by \eqref{eqn:sigmaAasawedge}, $A \in \bigvee(\mathcal{W} \cup \mathcal{M})$.

    The remaining parts follow by arguing as in \cite[Lemma 2.13, Lemma 2.14, Corollary 2.16]{S} respectively.\end{proof}

The next result shows the property of $\Omega \zk \in \prod(\mathcal{P} \cup \mathcal{T})$ is closed under pushouts of simplicial complexes over full subcomplexes.

\begin{theorem}
\label{thm:pushoutofPisinP}
    Let $K$ be a simplicial complex defined as the pushout \[\begin{tikzcd}
	L & {K_1} \\
	{K_2} & K
	\arrow[from=1-1, to=1-2]
	\arrow[from=1-2, to=2-2]
	\arrow[from=1-1, to=2-1]
	\arrow[from=2-1, to=2-2]
\end{tikzcd}\] where either $L = \emptyset$ or $L$ is a proper full subcomplex of $K_1$ and $K_2$. If $\Sigma A_i \in \bigvee(\mathcal{W} \cup \mathcal{M})$ for all $i$, $\Omega \caa^{K_1} \in \prod(\mathcal{P} \cup \mathcal{T})$ and $\Omega \caa^{K_2} \in \prod(\mathcal{P} \cup \mathcal{T})$, then $\Omega \caa^K \in \prod(\mathcal{P} \cup \mathcal{T})$.
\end{theorem}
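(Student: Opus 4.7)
The plan is to follow the template of the analogous statement for $\prod \mathcal{P}$ in \cite{S}, replacing the coarser classes with the larger ones $\bigvee(\mathcal{W} \cup \mathcal{M})$ and $\prod(\mathcal{P} \cup \mathcal{T})$ developed in this paper. The substantive first step is to produce a loop-space decomposition
\[\Omega \caa^K \simeq \Omega \caa^{K_1} \times \Omega \caa^{K_2} \times \Omega Y,\]
where $Y$ is a wedge of suspensions of smash products of the subspaces $A_i$, indexed by subsets of $[m]$ whose vertices straddle both $K_1$ and $K_2$ (i.e.\ those not already accounted for by the suspension splittings of $\caa^{K_1}$ or $\caa^{K_2}$). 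Such a splitting is available from the methods of \cite{S}, where the analogous statement for $\prod \mathcal{P}$ is proved by exactly this kind of product decomposition; the fullness hypothesis on $L$ (or its emptiness) is what ensures the combinatorics of the pushout $K_1 \cup_L K_2$ behaves compatibly with the polyhedral product functor so that the cross terms collect into $Y$.

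The second step is to show $\Omega Y \in \prod(\mathcal{P} \cup \mathcal{T})$. For this I would show $Y \in \bigvee(\mathcal{W} \cup \mathcal{M})$, so that Lemma~\ref{lem:suspensionofTisinW'}(2) applies. Each wedge summand of $Y$ has the form $\Sigma^k A_{i_1} \wedge \cdots \wedge A_{i_\ell}$ with $k \geq 1$. Shifting the suspension to one factor rewrites this as $\Sigma A_{i_1} \wedge A_{i_2} \wedge \cdots \wedge A_{i_\ell}$, up to suspension, where $\Sigma A_{i_1} \in \bigvee(\mathcal{W} \cup \mathcal{M})$ by hypothesis. Distributing the wedge decomposition of $\Sigma A_{i_1}$ across the smash and iterating this procedure on the remaining factors, then invoking Lemma~\ref{lem:smashofMoorespaces} to simplify smash products of Moore spaces at different primes or at odd prime powers, and invoking the very definition of $\mathcal{M}$ for smashes of mod $2$ Moore spaces, puts each wedge summand of $Y$ in $\bigvee(\mathcal{W} \cup \mathcal{M})$. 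Hence $Y \in \bigvee(\mathcal{W} \cup \mathcal{M})$ and so $\Omega Y \in \prod(\mathcal{P} \cup \mathcal{T})$.

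For the final step, since $\prod(\mathcal{P} \cup \mathcal{T})$ is closed under finite type products by definition, and $\Omega \caa^{K_1}, \Omega \caa^{K_2}, \Omega Y$ are all in $\prod(\mathcal{P} \cup \mathcal{T})$, the displayed product decomposition gives $\Omega \caa^K \in \prod(\mathcal{P} \cup \mathcal{T})$.

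The main obstacle is the first step: setting up or adapting from \cite{S} the splitting $\Omega \caa^K \simeq \Omega \caa^{K_1} \times \Omega \caa^{K_2} \times \Omega Y$ with $Y$ of the required combinatorial shape, since the fullness (or emptiness) of $L$ enters in a non-trivial way here. Once that splitting is in hand, the rest of the argument is a clean assembly of the preliminary relations Lemma~\ref{lem:suspensionofTisinW'} and Lemma~\ref{lem:smashofMoorespaces} with the retract-closure results Theorem~\ref{thm:WcupMclosedunderretracts} and Theorem~\ref{thm:PcupTclosedunderretracts}.
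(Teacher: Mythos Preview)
Your approach is essentially the same as the paper's: the paper also defers entirely to \cite[Theorem~4.1]{S} and simply observes that the argument there goes through once closure of $\prod\mathcal{P}$ under retracts is replaced by Theorem~\ref{thm:PcupTclosedunderretracts} and the $\bigvee\mathcal{W}$-analogue of Lemma~\ref{lem:suspensionofTisinW'} is replaced by Lemma~\ref{lem:suspensionofTisinW'} itself. One small caveat: the actual splitting in \cite{S} is not the symmetric product $\Omega\caa^{K_1}\times\Omega\caa^{K_2}\times\Omega Y$ you write (that would double-count $\Omega\caa^L$, and the argument passes through half-smash products, which is why part~(5) of Lemma~\ref{lem:suspensionofTisinW'} is needed), but you already flag the first step as the one to be extracted from \cite{S}, so this does not affect the soundness of your plan.
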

\begin{proof}
    The case where $\Sigma A_i \in \bigvee\mathcal{W}$, and each loop space is in $\prod\mathcal{P}$ was proved in \cite[Theorem 4.1]{S}. The proof depended on two results: closure of $\prod\mathcal{P}$ under retracts and the $\bigvee\mathcal{W}$ analogue of Lemma ~\ref{lem:suspensionofTisinW'}. The same argument holds for $\prod(\mathcal{P} \cup \mathcal{T})$ using Theorem ~\ref{thm:PcupTclosedunderretracts}, and Lemma ~\ref{lem:suspensionofTisinW'}.
\end{proof}

Using Theorem ~\ref{thm:pushoutofPisinP}, we can obtain a generalisation of \cite[Theorem 1.1]{S}.  To do this, we require the following pushout decomposition for a simplicial complex $K$ from \cite[Lemma 4.4]{S}. 

\begin{lemma}
\label{decomposesimpcomp}
    Let $K$ be a simplicial complex and $v \in V(K)$. Then $K$ can be written as the pushout \[\begin{tikzcd}
	{K_{N(v)}} & {K_{v \cup N(v)}} \\
	{K_{V(K) \setminus \{v\}}} & K.
	\arrow[from=2-1, to=2-2]
	\arrow[from=1-2, to=2-2]
	\arrow[from=1-1, to=1-2]
	\arrow[from=1-1, to=2-1]
\end{tikzcd}\] Moreover, $K_{N(v)}$ is a full subcomplex of both $K_{v \cup N(v)}$ and $K_{V(K) \setminus \{v\}}$. \qedno
\end{lemma}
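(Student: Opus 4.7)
The plan is a direct combinatorial verification. Writing $K_1 = K_{v\cup N(v)}$, $K_2 = K_{V(K)\setminus\{v\}}$, and $L = K_{N(v)}$, I would check that $K_1\cup K_2 = K$ and $K_1\cap K_2 = L$ as simplicial complexes (since colimits in simplicial complexes are computed as in the category of sets on the face posets, this is enough to identify the pushout). The fullness statement is then immediate from the definitions.

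For the cover, take any simplex $\sigma\in K$. If $v\notin \sigma$, then the vertex set of $\sigma$ lies in $V(K)\setminus\{v\}$, so $\sigma\in K_2$. If instead $v\in\sigma$, then for every other vertex $w\in\sigma$ the edge $\{v,w\}$ is a face of $\sigma$ and hence a simplex of $K$; by the definition of $N(v)$ (vertices adjacent to $v$ in $K$) this means $w\in N(v)$. Therefore the vertex set of $\sigma$ is contained in $\{v\}\cup N(v)$, and $\sigma\in K_1$. This is really the only substantive step: the fact that in a simplicial complex, every edge of $\sigma$ is itself a simplex is what forces simplices containing $v$ to have all other vertices in $N(v)$.

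For the intersection, a simplex $\sigma\in K$ belongs to both $K_1$ and $K_2$ iff its vertex set is contained in
\[
(\{v\}\cup N(v))\cap (V(K)\setminus\{v\}) \;=\; N(v),
\]
which is exactly the condition $\sigma\in L$. Combined with the cover property, this identifies $K$ with the pushout of $K_1\leftarrow L\rightarrow K_2$.

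Finally, for the fullness claim, suppose $\tau$ is a simplex of $K_1$ whose vertex set is contained in $N(v)$. Since $K_1\subseteq K$, $\tau$ is a simplex of $K$ on a vertex set in $N(v)$, and hence $\tau\in L$ by definition of the full subcomplex on $N(v)$; so $L$ is a full subcomplex of $K_1$. The identical argument, using $N(v)\subseteq V(K)\setminus\{v\}$, shows that $L$ is also a full subcomplex of $K_2$. I expect no genuine obstacle in this proof; the entire content is the observation about simplices containing $v$ highlighted above.
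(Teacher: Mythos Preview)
Your argument is correct: the decomposition into $K_1\cup K_2=K$ with $K_1\cap K_2=L$ via the observation that any simplex containing $v$ must have all remaining vertices in $N(v)$, together with the routine fullness check, is exactly the natural proof. Note that the paper does not actually prove this lemma but quotes it from \cite[Lemma 4.4]{S}, so there is no proof in the paper to compare against; your direct combinatorial verification is the expected one.
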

For a simplicial complex $K$, let $C_K$ be the collection of full subcomplexes of $K$ whose $1$-skeleton has no missing edges. For a vertex $v \in V(K)$, denote by $N(v)$ the set of vertices adjacent to $v$ in the $1$-skeleton of $K$. A \textit{dominating vertex} of $K$ is a vertex $v$ such that $N(v) = V(K) \setminus \{v\}$. In other words, $v$ is adjacent to every other vertex in the $1$-skeleton of $K$. The following result is a generalisation of \cite[Theorem 1.1]{S}

\begin{theorem}
    \label{thm:inPcupTiffullsubcompleteskel}
    If $K$ is a simplicial complex, then $\Omega \zk \in \prod(\mathcal{P} \cup \mathcal{T})$ if and only if $\Omega \mathcal{Z}_{K_I} \in \prod(\mathcal{P} \cup \mathcal{T})$ for all $K_I \in C_K$. Moreover, $\Omega \zk \in \prod\mathcal{P}$ if and only if $\Omega \mathcal{Z}_{K_I} \in \prod\mathcal{P}$ for all $K_I \in C_K$. 
\end{theorem}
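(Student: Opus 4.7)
The plan is to prove both directions separately, with the harder content concentrated in the backward direction.

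For the forward direction, I will use the standard fact that whenever $K_I$ is a full subcomplex of $K$, the coordinate projection $\prod_{i=1}^m D^2 \to \prod_{i \in I} D^2$ restricts to a retraction $\mathcal{Z}_K \to \mathcal{Z}_{K_I}$. Looping this gives a retraction of $\Omega \mathcal{Z}_K$ onto $\Omega \mathcal{Z}_{K_I}$. Since $\prod(\mathcal{P} \cup \mathcal{T})$ is closed under retracts by Theorem~\ref{thm:PcupTclosedunderretracts}, $\Omega \mathcal{Z}_{K_I} \in \prod(\mathcal{P} \cup \mathcal{T})$ for every full subcomplex of $K$, in particular for every $K_I \in C_K$. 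The "moreover" forward direction is identical using Theorem~\ref{thm:Pclosedunderretracts}.

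For the backward direction, I will induct on the number of vertices of $K$. The base case consists of those $K$ whose $1$-skeleton has no missing edges, so that $K \in C_K$; then the hypothesis directly yields $\Omega \mathcal{Z}_K \in \prod(\mathcal{P} \cup \mathcal{T})$. For the inductive step, suppose $K$ has a missing edge. Then some vertex $v$ is not a dominating vertex, so $N(v) \subsetneq V(K) \setminus \{v\}$. Apply Lemma~\ref{decomposesimpcomp} to realise $K$ as the pushout of $K_{v \cup N(v)}$ and $K_{V(K) \setminus \{v\}}$ along $K_{N(v)}$, where $K_{N(v)}$ is a proper full subcomplex of each piece. Since $v$ is not dominating, both $K_{v \cup N(v)}$ and $K_{V(K) \setminus \{v\}}$ have strictly fewer vertices than $K$.

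The key observation needed to invoke the inductive hypothesis is that if $K'$ is a full subcomplex of $K$, then every full subcomplex of $K'$ is a full subcomplex of $K$, and the property of having a complete $1$-skeleton is intrinsic, so $C_{K'} \subseteq C_K$. Applying this to $K' = K_{v \cup N(v)}$ and $K' = K_{V(K) \setminus \{v\}}$ shows that each of these smaller simplicial complexes inherits the hypothesis of the theorem from $K$. By induction, $\Omega \mathcal{Z}_{K_{v \cup N(v)}}$ and $\Omega \mathcal{Z}_{K_{V(K) \setminus \{v\}}}$ both lie in $\prod(\mathcal{P} \cup \mathcal{T})$. Since $(X_i, A_i) = (D^2, S^1)$ and $\Sigma S^1 \simeq S^2 \in \bigvee \mathcal{W} \subseteq \bigvee(\mathcal{W} \cup \mathcal{M})$, the hypotheses of Theorem~\ref{thm:pushoutofPisinP} are satisfied, and we conclude $\Omega \mathcal{Z}_K \in \prod(\mathcal{P} \cup \mathcal{T})$. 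The "moreover" backward direction follows identically using the $\prod \mathcal{P}$ version of Theorem~\ref{thm:pushoutofPisinP} (namely \cite[Theorem~4.1]{S}). The only real subtlety in the argument is verifying the inclusion $C_{K'} \subseteq C_K$ and checking the non-emptiness/properness hypotheses for the pushout, both of which are essentially bookkeeping; no genuine obstacle arises beyond combining the results already established.
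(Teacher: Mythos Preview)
Your proposal is correct and follows essentially the same argument as the paper: the forward direction via the Denham--Suciu retraction plus closure of $\prod(\mathcal{P}\cup\mathcal{T})$ under retracts, and the backward direction via strong induction on the vertex set using the pushout of Lemma~\ref{decomposesimpcomp} and Theorem~\ref{thm:pushoutofPisinP}. You are in fact slightly more explicit than the paper in spelling out the inclusion $C_{K'}\subseteq C_K$ for full subcomplexes $K'\subseteq K$ and in checking the hypothesis $\Sigma S^1\in\bigvee(\mathcal{W}\cup\mathcal{M})$, both of which the paper leaves implicit.
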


\begin{proof}
Suppose $\Omega \zk \in \prod(\mathcal{P} \cup \mathcal{T})$ but there exists $K_I \in C_K$ such that $\Omega \mathcal{Z}_{K_I} \notin \prod(\mathcal{P} \cup \mathcal{T})$. By \cite{DS}, $\mathcal{Z}_{K_I}$ retracts off $\zk$, and so $\Omega \mathcal{Z}_{K_I}$ retracts off $\Omega \zk$. Theorem ~\ref{thm:PcupTclosedunderretracts} then implies that $\Omega \mathcal{Z}_{K_I} \in \prod(\mathcal{P} \cup \mathcal{T})$ which is a contradiction.

Now suppose that $\Omega \mathcal{Z}_{K_I} \in \prod(\mathcal{P} \cup \mathcal{T})$ for all $K_I \in C_K$. We proceed by strong induction. If $K$ has one vertex, then $\zk$ is contractible, and so $\Omega \zk \in \prod(\mathcal{P} \cup \mathcal{T})$.

    Now suppose $K$ has $m$ vertices, and the result is true for all $n <m$. If every vertex of $K$ is a dominating vertex, then every vertex in $K$ is adjacent to every other vertex in $K$. Hence, $K \in C_K$ so by assumption, $\Omega \zk \in \prod(\mathcal{P} \cup \mathcal{T})$. Therefore, suppose there exists a vertex $v \in V(K)$ such that $v$ is not a dominating vertex of $K$. By Lemma ~\ref{decomposesimpcomp}, $K$ can be written as the pushout \[\begin{tikzcd}
	{K_{N(v)}} & {K_{v \cup N(v)}} \\
	{K_{V(K) \setminus \{v\}}} & K
	\arrow[from=2-1, to=2-2]
	\arrow[from=1-2, to=2-2]
	\arrow[from=1-1, to=1-2]
	\arrow[from=1-1, to=2-1]
\end{tikzcd}\] where $K_{N(v)}$ is a full subcomplex of both $K_{v \cup N(v)}$ and $K_{V(K) \setminus \{v\}}$. Since $v$ is not a dominating vertex, $v \cup N(v)$ is not the whole vertex set of $K$, so $K_{v \cup N(v)}$ and $K_{V(K) \setminus \{v\}}$ are simplicial complexes with strictly fewer vertices than $K$. Therefore, by the inductive hypothesis, $\Omega \mathcal{Z}_{K_{v \cup N(v)}} \in \prod(\mathcal{P} \cup \mathcal{T})$ and $\Omega \mathcal{Z}_{K_{V(K) \setminus \{v\}}} \in \prod(\mathcal{P} \cup \mathcal{T})$. Hence, Theorem ~\ref{thm:pushoutofPisinP} implies that $\Omega \zk \in \prod(\mathcal{P} \cup \mathcal{T})$. 

In the special case where $\Omega \zk \in \prod\mathcal{P}$, the same proof follows through.
\end{proof}

\section{Loop spaces of moment-angle complexes associated to \texorpdfstring{$2$}{2}-dimensional simplicial complexes}

In this section, we consider moment-angle complexes associated to $2$-dimensional simplicial complexes. We start with a more general statement. To prove this, we require the following from \cite[Theorem 1.6]{IK}. Recall that a simplicial complex $K$ is called $k$-neighbourly if any $I \subseteq [m]$ with $|I| \leq k+1$ spans a simplex. For $x \in \mathbb{R}$, define $\ceil{x}$ to be the smallest integer $z$ such that $z \geq x$.

\begin{lemma}
    \label{lem:neighbourlyBBCG}
    Let $K$ be a simplicial complex on $[m]$. If $K$ is $\ceil{\frac{\mathrm{dim}(K)}{2}}$-neighbourly, then there is a homotopy equivalence \[\zk \simeq \bigvee\limits_{I \notin K} \Sigma^{1+|I|} |K_I|.\eqno\qedno\]
\end{lemma}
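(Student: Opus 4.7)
The plan is to desuspend the BBCG splitting of Proposition~\ref{prop:suspensionsplitting}, which reads $\Sigma\zk \simeq \bigvee_{I \notin K}\Sigma^{2+|I|}|K_I|$, so that the suspension can be stripped off under the neighbourliness hypothesis. The $\ceil{\dim(K)/2}$-neighbourly assumption forces every non-face $I$ of $K$ to satisfy $|I| \geq \ceil{\dim(K)/2}+2$, so each wedge summand $\Sigma^{1+|I|}|K_I|$ has a large connectivity gap over $\dim|K_I| \leq \dim(K)$. This numerical slack is exactly what is needed to realise the splitting one suspension earlier.

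I would argue via the fat wedge filtration of $\zk$ introduced by Iriye and Kishimoto. In this filtration, $\zk$ is built inductively by attaching cells corresponding to missing faces of $K$, with each attaching map landing in a subspace of $\zk$ whose connectivity can be bounded below in terms of the sizes of the non-faces involved. The neighbourliness hypothesis is calibrated precisely so that at every stage the attaching sphere has dimension strictly below the connectivity of its target. Consequently every attaching map is null-homotopic, and $\zk$ splits as a wedge of the resulting cofibres. A direct identification of these cofibres with $\Sigma^{1+|I|}|K_I|$ for $I \notin K$ yields the claimed homotopy equivalence. As a consistency check, after suspending one recovers the BBCG splitting of Proposition~\ref{prop:suspensionsplitting}, so the wedge summands automatically match up with the expected ones.

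The main obstacle is the connectivity-versus-dimension book-keeping for the attaching maps in the fat wedge filtration: verifying that $\ceil{\dim(K)/2}$-neighbourliness is genuinely the right hypothesis to force each attaching map to be null-homotopic requires a careful induction over the poset of simplices and a uniform lower bound on the connectivity of each partial polyhedral product appearing in the filtration. This is the technical heart of \cite[Theorem 1.6]{IK}, so the shortest route is to invoke that result directly; a more hands-on proof would reproduce its fat wedge filtration argument with the explicit connectivity estimates supplied by the neighbourliness hypothesis, and then conclude by Whitehead's theorem after comparing with Proposition~\ref{prop:suspensionsplitting} on homology.
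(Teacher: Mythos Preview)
Your proposal is correct, and in fact goes beyond what the paper does: the paper does not prove this lemma at all but simply quotes it as \cite[Theorem~1.6]{IK}, as you yourself note is the shortest route. Your sketch of the fat wedge filtration argument and the connectivity-versus-dimension book-keeping is essentially an outline of the proof given in \cite{IK}, so there is no genuine difference in approach to compare.
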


We also require a wedge decomposition of $(n-1)$ connected, $(n+1)$-dimensional $CW$-complexes, where $n \geq 2$. This is proved in \cite[Example 4C.2]{H} for example.

\begin{lemma}
\label{lem:lowdimdecompwedgeofsphereandMoore}
    Let $X$ be an $(n-1)$ connected, $(n+1)$-dimensional $CW$-complex, where $n \geq 2$. Then $X$ is homotopy equivalent to a wedge of spheres and Moore spaces. If $H_*(X)$ is torsion free, then $X \in \bigvee\mathcal{W}$. \qed
\end{lemma}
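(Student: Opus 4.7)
The plan is to build $X$ from a minimal CW structure and diagonalise the only non-trivial attaching map via Smith normal form.

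First I would replace $X$ by a homotopy equivalent CW complex with cells only in dimensions $n$ and $n+1$. Since $X$ is $(n-1)$-connected and $(n+1)$-dimensional, a standard argument (building up a CW approximation using the Hurewicz theorem to kill and then realise the homology) shows that $X$ is homotopy equivalent to a complex of the form $Y = \bigl(\bigvee_{i \in I} S^n\bigr) \cup_{\phi} \bigl(\bigvee_{j \in J} e^{n+1}\bigr)$, where the indexing sets $I$ and $J$ are finite since $X$ has finitely generated homology. The attaching map $\phi : \bigvee_{j \in J} S^n \to \bigvee_{i \in I} S^n$ is then an element of $\pi_n\bigl(\bigvee_{i \in I} S^n\bigr)^{|J|}$.

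Next, since $n \geq 2$, the wedge $\bigvee_{i \in I} S^n$ is simply connected, so by the Hurewicz theorem $\pi_n\bigl(\bigvee_{i \in I} S^n\bigr) \cong H_n\bigl(\bigvee_{i \in I} S^n\bigr) \cong \bigoplus_{i \in I} \mathbb{Z}$ is free abelian. Hence $\phi$ is determined up to homotopy by an integer matrix $M$ of size $|I| \times |J|$. I would then put $M$ into Smith normal form: there exist invertible integer matrices $P$ and $Q$ such that $PMQ$ is diagonal with entries $d_1, \dots, d_r, 0, \dots, 0$ where each $d_k \geq 1$. The key point is that $P$ and $Q$ correspond to self-homotopy equivalences of $\bigvee_{i \in I} S^n$ and $\bigvee_{j \in J} S^n$ respectively, realised as compositions of degree maps, pinch maps and fold maps using the co-$H$ structure; this uses $n \geq 2$ so that the relevant automorphism groups of $\pi_n$ are realised by homotopy self-equivalences. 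Precomposing $\phi$ with $Q$ and changing basis on the target via $P$ replaces the pushout defining $Y$ by a homotopy equivalent pushout in which the attaching map is a wedge of degree $d_k$ maps on individual spheres together with null maps.

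Consequently $Y$ splits as a wedge whose summands are indexed by the columns of the diagonal form: for each $d_k \geq 2$ one obtains a Moore space $P^{n+1}(d_k)$; for each $d_k = 1$ the corresponding summand is contractible; for each zero column one obtains an $S^{n+1}$; and the rows corresponding to $S^n$ summands not hit by any degree map contribute an $S^n$. Writing each $d_k \geq 2$ as a product of prime powers and applying the standard splitting $P^{n+1}(ab) \simeq P^{n+1}(a) \vee P^{n+1}(b)$ when $\gcd(a,b)=1$ gives the required decomposition of $X$ as a wedge of spheres and Moore spaces. Finally, if $H_*(X)$ is torsion free, then the Smith normal form of $M$ has $d_k \in \{0,1\}$ for all $k$, so no Moore space summands appear, giving $X \in \bigvee \mathcal{W}$.

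The only mildly technical step is the realisation of the row and column operations in Smith normal form by honest homotopy self-equivalences of the wedges of $S^n$'s, but for $n \geq 2$ this is standard since every automorphism of $\bigoplus_{i \in I} \mathbb{Z}$ is realised by a homotopy equivalence of $\bigvee_{i \in I} S^n$ built from fold, pinch and degree maps.
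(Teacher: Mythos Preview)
Your proof is correct. The paper does not supply its own argument for this lemma but cites Hatcher's \emph{Algebraic Topology}, Example 4C.2, which is exactly the Smith normal form argument you have written out: pass to a minimal CW structure with cells in dimensions $n$ and $n+1$, identify the attaching map with an integer matrix via the Hurewicz isomorphism, diagonalise, and read off the wedge summands.
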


\begin{theorem}
    \label{thm:highlyneighbourly}
    If $K$ is a simplicial complex on $[m]$ such that each $K_I \in C_K$ is $\mathrm{dim}(K_I)-1$-neighbourly, then $\Omega \zk \in \prod(\mathcal{P} \cup \mathcal{T})$. If $H_*(|K_I|;\mathbb{Z})$ is torsion free for all $K_I \in C_K$, then $\Omega \mathcal{Z}_{K} \in \prod\mathcal{P}$.
\end{theorem}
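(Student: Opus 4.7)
The strategy is to reduce to individual full subcomplexes via Theorem~\ref{thm:inPcupTiffullsubcompleteskel} and then combine the neighbourliness splitting of Lemma~\ref{lem:neighbourlyBBCG} with the low-dimensional wedge decomposition of Lemma~\ref{lem:lowdimdecompwedgeofsphereandMoore}.

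First, by Theorem~\ref{thm:inPcupTiffullsubcompleteskel} it suffices to show that $\Omega \mathcal{Z}_{K_I} \in \prod(\mathcal{P} \cup \mathcal{T})$ for every $K_I \in C_K$. Fix such a $K_I$ and set $d = \mathrm{dim}(K_I)$. The case $d = 0$ forces $|I| = 1$, since any two vertices of $K_I$ would otherwise give a missing edge, so $\mathcal{Z}_{K_I}$ is contractible. For $d \geq 1$, the hypothesis that $K_I$ is $(d-1)$-neighbourly (augmented by the observation that $K_I \in C_K$ is automatically $1$-neighbourly when $d = 1$) implies $K_I$ is $\lceil d/2 \rceil$-neighbourly, so Lemma~\ref{lem:neighbourlyBBCG} yields a homotopy equivalence
\[\mathcal{Z}_{K_I} \simeq \bigvee_{J \notin K_I} \Sigma^{1+|J|} |(K_I)_J|.\]

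Next I would analyse each wedge summand. For $J \notin K_I$, the $(d-1)$-neighbourliness of $K_I$ forces $|J| \geq d+1$, and this neighbourliness property is inherited by the full subcomplex $(K_I)_J$. Thus $(K_I)_J$ contains the full $(d-1)$-skeleton of $\Delta^{|J|-1}$, so $|(K_I)_J|$ is a $(d-2)$-connected $CW$ complex of dimension at most $d$. Consequently $\Sigma^{1+|J|} |(K_I)_J|$ is an $(n-1)$-connected, $(n+1)$-dimensional $CW$ complex where $n = d + |J| \geq 2$, and Lemma~\ref{lem:lowdimdecompwedgeofsphereandMoore} provides a homotopy equivalence to a wedge of spheres and Moore spaces (whose bottom dimension is at least $3$, so the Moore summands lie in $\mathcal{M}$). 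Assembling over $J$ gives $\mathcal{Z}_{K_I} \in \bigvee(\mathcal{W} \cup \mathcal{M})$, and Lemma~\ref{lem:suspensionofTisinW'}(2) then delivers $\Omega \mathcal{Z}_{K_I} \in \prod(\mathcal{P} \cup \mathcal{T})$, as required for the first claim.

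For the refined statement, I would observe that every $(K_I)_J$ is itself an element of $C_K$, because its $1$-skeleton inherits the no-missing-edges property from that of $K_I$. The hypothesis that $H_*(|K_I|;\mathbb{Z})$ is torsion free for all $K_I \in C_K$ therefore applies to every $(K_I)_J$, so the ``moreover'' clause of Lemma~\ref{lem:lowdimdecompwedgeofsphereandMoore} upgrades each wedge summand to an element of $\bigvee\mathcal{W}$. Hence $\mathcal{Z}_{K_I} \in \bigvee\mathcal{W}$, which together with the classical James--Hilton--Milnor argument (or equivalently tracking the torsion-free case through Lemma~\ref{lem:suspensionofTisinW'}(2)) gives $\Omega \mathcal{Z}_{K_I} \in \prod\mathcal{P}$. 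The second clause of Theorem~\ref{thm:inPcupTiffullsubcompleteskel} then finishes the argument. The main technical obligation will be verifying that the connectivity and dimension bounds on the subcomplexes $(K_I)_J$ align to satisfy the hypothesis $n \geq 2$ of Lemma~\ref{lem:lowdimdecompwedgeofsphereandMoore} uniformly, which requires checking the boundary cases $d \in \{0,1\}$ and $|J| = d+1$ explicitly.
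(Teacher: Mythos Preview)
Your proposal is correct and follows essentially the same route as the paper: reduce via Theorem~\ref{thm:inPcupTiffullsubcompleteskel}, apply Lemma~\ref{lem:neighbourlyBBCG} to each $K_I\in C_K$, use the inherited neighbourliness of the full subcomplexes $(K_I)_J$ to get the connectivity needed for Lemma~\ref{lem:lowdimdecompwedgeofsphereandMoore}, and finish with Lemma~\ref{lem:suspensionofTisinW'}. The only cosmetic differences are that the paper treats $\dim(K_I)=1$ separately via \cite{GT} rather than folding it into the general case through the $1$-neighbourliness coming from $C_K$, and it cites \cite[Lemma~10.8]{IK} for the connectivity of $(K_I)_J$ where you argue directly from the skeleton; your explicit remark that each $(K_I)_J$ again lies in $C_K$ is exactly the point the paper uses implicitly in the torsion-free clause.
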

\begin{proof}
By Theorem ~\ref{thm:inPcupTiffullsubcompleteskel}, it suffices to show that $\Omega Z_{K_I} \in \prod(\mathcal{P} \cup \mathcal{T})$, where $K_I \in C_K$. If $\mathrm{dim}(K_I) =1$, then $K_I$ is the $1$-skeleton of a simplex, and so by \cite[Theorem 9.1]{GT}, $\mathcal{Z}_{K_I}$ is homotopy equivalent to a wedge of simply connected spheres. The Hilton-Milnor theorem then implies that $\Omega \mathcal{Z}_{K_I} \in \prod\mathcal{P}$. If $\mathrm{dim}(K_I) > 1$, then $\mathrm{dim}(K_I)-1 \geq \ceil{\frac{\mathrm{dim}(K_I)}{2}}$, and so Lemma ~\ref{lem:neighbourlyBBCG} implies that there is a homotopy equivalence \[\mathcal{Z}_{K_I} \simeq \bigvee\limits_{J \notin K_I} \Sigma^{1+|J|} |K_J|.\] Since $K_I$ is $\mathrm{dim}(K_I)-1$-neighbourly, each $K_J$ is $\mathrm{dim}(K_I)-1$-neighbourly. Moreover, each $K_J$ is a full subcomplex of $K_I$, and so $\mathrm{dim}(K_J) \leq \mathrm{dim}(K_I)$. Hence, each $K_J$ is at least $\mathrm{dim}(K_J)-1$ neighbourly. By \cite[Lemma 10.8]{IK} for example, this implies that each $K_J$ is at least $(\mathrm{dim}(K_J)-2)$-connected. Since $\Sigma^{1+|J|} |K_J|$ is simply connected, by Lemma ~\ref{lem:lowdimdecompwedgeofsphereandMoore}, each $\Sigma^{1+|J|} |K_J|$ is homotopy equivalent to a wedge of spheres and Moore spaces. Hence part $(2)$ of Lemma ~\ref{lem:suspensionofTisinW'} implies that $\Omega \Sigma^{1+|J|}|K_J| \in \prod(\mathcal{P} \cup \mathcal{T})$, and therefore part $(6)$ of Lemma ~\ref{lem:suspensionofTisinW'} implies that $\Omega \mathcal{Z}_{K_I} \in \prod(\mathcal{P} \cup \mathcal{T})$. Hence, $\Omega \zk \in \prod(\mathcal{P} \cup \mathcal{T})$.

    If $H_*(|K_I|;\mathbb{Z})$ is torsion free for all $K_I \in C_K$, then by Lemma ~\ref{lem:lowdimdecompwedgeofsphereandMoore}, each $\Sigma^{1+|J|} |K_J|$ is homotopy equivalent to a wedge of spheres. The Hilton-Milnor theorem implies that $\Omega \Sigma^{1+|J|}|K_J| \in \prod\mathcal{P}$, and so Theorem ~\ref{thm:inPcupTiffullsubcompleteskel} implies that $\Omega \zk \in \prod\mathcal{P}$.
\end{proof}

Theorem ~\ref{thm:inPcupTiffullsubcompleteskel} and Theorem ~\ref{thm:highlyneighbourly} can be applied to give a coarse decomposition of the loops on a moment-angle complex associated to any $2$-dimensional simplicial complex.

\begin{theorem}
    \label{thm:2dimloop}
    Let $K$ be a $2$-dimensional simplicial complex. Then $\Omega \mathcal{Z}_{K} \in \prod(\mathcal{P} \cup \mathcal{T})$.
\end{theorem}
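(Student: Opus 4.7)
The plan is to reduce the problem via Theorem~\ref{thm:inPcupTiffullsubcompleteskel} to examining only the full subcomplexes $K_I \in C_K$, and then show each such $K_I$ satisfies the hypothesis of Theorem~\ref{thm:highlyneighbourly}. The key observation is that when $K$ is $2$-dimensional, every $K_I \in C_K$ automatically has its $1$-skeleton complete (by definition of $C_K$) and has dimension at most $2$, which forces $K_I$ to be $(\dim(K_I)-1)$-neighbourly in every case.

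First, I would apply Theorem~\ref{thm:inPcupTiffullsubcompleteskel}, which says it suffices to prove $\Omega \mathcal{Z}_{K_I} \in \prod(\mathcal{P} \cup \mathcal{T})$ for each $K_I \in C_K$. Fix such a $K_I$. Since $K$ is $2$-dimensional and $K_I$ is a full subcomplex of $K$, we have $\dim(K_I) \leq 2$. I would then split into cases based on $\dim(K_I)$.

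If $\dim(K_I) = 0$, then $K_I$ is a single point (as a connected full subcomplex with no missing edges on one vertex) or a disjoint union of points, in which case $\mathcal{Z}_{K_I}$ is already handled by the dimension $1$ case via \cite[Theorem 9.1]{GT}; alternatively for a single vertex, $\mathcal{Z}_{K_I}$ is contractible. If $\dim(K_I) = 1$, then since $K_I$ has complete $1$-skeleton, $K_I$ is itself the $1$-skeleton of a simplex, and by \cite[Theorem 9.1]{GT} together with the Hilton--Milnor theorem, $\Omega \mathcal{Z}_{K_I} \in \prod \mathcal{P} \subseteq \prod(\mathcal{P} \cup \mathcal{T})$. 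If $\dim(K_I) = 2$, then $K_I$ being in $C_K$ means its $1$-skeleton has no missing edges, i.e.\ $K_I$ is $1$-neighbourly, which is exactly $(\dim(K_I) - 1)$-neighbourly. Hence Theorem~\ref{thm:highlyneighbourly}, applied to the simplicial complex $K_I$ (noting that every full subcomplex of $K_I$ in $C_{K_I}$ satisfies the same neighbourliness hypothesis as a consequence of inheriting the $1$-skeleton completeness and having dimension at most $2$), gives $\Omega \mathcal{Z}_{K_I} \in \prod(\mathcal{P} \cup \mathcal{T})$.

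Since each $\Omega \mathcal{Z}_{K_I} \in \prod(\mathcal{P} \cup \mathcal{T})$ for $K_I \in C_K$, Theorem~\ref{thm:inPcupTiffullsubcompleteskel} yields $\Omega \mathcal{Z}_K \in \prod(\mathcal{P} \cup \mathcal{T})$. There is no real obstacle here beyond carefully verifying the neighbourliness bookkeeping in the $\dim(K_I) = 2$ case: the point to check is that the membership of $K_I$ in $C_K$ is precisely what supplies $1$-neighbourliness, and $1$-neighbourliness coincides with $(\dim(K_I)-1)$-neighbourliness exactly because $\dim(K_I) = 2$, which is the dimensional sweet spot making Theorem~\ref{thm:highlyneighbourly} directly applicable.
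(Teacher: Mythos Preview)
Your proposal is correct and follows essentially the same route as the paper: reduce via Theorem~\ref{thm:inPcupTiffullsubcompleteskel} to the full subcomplexes $K_I\in C_K$, dispatch the $1$-dimensional case with \cite[Theorem~9.1]{GT} and Hilton--Milnor, and invoke Theorem~\ref{thm:highlyneighbourly} for the $2$-dimensional case after observing that membership in $C_K$ gives exactly $1$-neighbourliness. One small correction: a $0$-dimensional $K_I\in C_K$ is necessarily a single vertex (a disjoint union of two or more points has missing edges, so cannot lie in $C_K$), making that case trivially contractible; your parenthetical handles it either way, so this does not affect the argument.
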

\begin{proof}
    If $K_I \in C_K$ is $1$-dimensional, then by \cite[Theorem 9.1]{GT}, $\mathcal{Z}_{K_I}$ is homotopy equivalent to a wedge of simply connected spheres. The Hilton-Milnor theorem implies that $\Omega \mathcal{Z}_{K_I} \in \prod\mathcal{P}$, which is contained in $\prod(\mathcal{P} \cup \mathcal{T})$. If $K_I \in C_K$ is $2$-dimensional, then it is $1$-neighbourly and Theorem ~\ref{thm:highlyneighbourly} implies that $\Omega \mathcal{Z}_{K_I} \in \prod(\mathcal{P} \cup \mathcal{T})$. Since each $K_I \in C_K$ is $1$-dimensional or $2$-dimensional, Theorem ~\ref{thm:inPcupTiffullsubcompleteskel} implies that $\Omega \zk \in \prod(\mathcal{P} \cup \mathcal{T})$.
\end{proof}

As an example of Theorem ~\ref{thm:2dimloop} which contains torsion, let $K$ be the $6$-vertex triangulation of $\mathbb{R}P^2$. In this case, $K$ is a $1$-neighbourly simplicial complex whose homology contains $2$-torsion, and so the decomposition in Theorem ~\ref{thm:2dimloop} will contain indecomposable $2$-torsion spaces. In \cite{LMR}, $1$-neighbourly, $2$-dimensional simplicial complexes which have arbitrarily large torsion in homology are constructed. Therefore, the loop space of a moment-angle complex associated to a $2$ dimensional simplicial complex can contain arbitrarily large torsion in homology. In the case that each $|K_I|$ has torsion free homology, where $K_I \in C_K$, we obtain the following.

\begin{corollary}
    \label{cor:torsionfree}
    Let $K$ be a $2$-dimensional simplicial complex. Suppose that $H_*(|K_I|;\mathbb{Z})$ is torsion free for all $K_I \in C_K$. Then $\Omega \mathcal{Z}_{K} \in \prod\mathcal{P}$. \qedno
\end{corollary}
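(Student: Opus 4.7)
The plan is to combine the reduction to full subcomplexes in Theorem~\ref{thm:inPcupTiffullsubcompleteskel} with the torsion-free refinement in the second part of Theorem~\ref{thm:highlyneighbourly}. By the $\prod\mathcal{P}$-version of Theorem~\ref{thm:inPcupTiffullsubcompleteskel}, it suffices to show that $\Omega\mathcal{Z}_{K_I}\in\prod\mathcal{P}$ for every $K_I\in C_K$. Since $K$ is $2$-dimensional, every full subcomplex $K_I$ satisfies $\mathrm{dim}(K_I)\leq 2$, so the argument splits into two cases.

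If $\mathrm{dim}(K_I)=1$, then $K_I\in C_K$ forces its $1$-skeleton (itself) to have no missing edges, so $K_I$ is the $1$-skeleton of a simplex. By \cite[Theorem 9.1]{GT}, $\mathcal{Z}_{K_I}$ is a wedge of simply connected spheres, and the Hilton--Milnor theorem gives $\Omega\mathcal{Z}_{K_I}\in\prod\mathcal{P}$, exactly as in the proof of Theorem~\ref{thm:2dimloop}.

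If $\mathrm{dim}(K_I)=2$, then $K_I$ is $1$-neighbourly and therefore $(\mathrm{dim}(K_I)-1)$-neighbourly. I would apply the second (torsion-free) statement of Theorem~\ref{thm:highlyneighbourly} to the simplicial complex $K_I$. The only condition requiring verification is that $H_*(|K_J|;\mathbb{Z})$ is torsion free for every $K_J\in C_{K_I}$. This is the main bookkeeping step: since $K_J$ is a full subcomplex of the full subcomplex $K_I$ of $K$, it is itself a full subcomplex of $K$; and the defining property of $C_{K_I}$ says that the $1$-skeleton of $K_J$ has no missing edges, so $K_J\in C_K$. The standing hypothesis of the corollary then yields that $H_*(|K_J|;\mathbb{Z})$ is torsion free, and Theorem~\ref{thm:highlyneighbourly} gives $\Omega\mathcal{Z}_{K_I}\in\prod\mathcal{P}$.

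Together, the two cases show that $\Omega\mathcal{Z}_{K_I}\in\prod\mathcal{P}$ for every $K_I\in C_K$, and Theorem~\ref{thm:inPcupTiffullsubcompleteskel} then delivers $\Omega\mathcal{Z}_K\in\prod\mathcal{P}$. No genuine obstacle arises: the real content was already packaged in Theorem~\ref{thm:highlyneighbourly} (which, via Lemma~\ref{lem:lowdimdecompwedgeofsphereandMoore}, is exactly where torsion-freeness of $H_*(|K_J|;\mathbb{Z})$ upgrades wedges of spheres-and-Moore-spaces to wedges of spheres) and in Theorem~\ref{thm:inPcupTiffullsubcompleteskel}; the only thing left to observe is the inclusion $C_{K_I}\subseteq C_K$ that propagates the torsion-free hypothesis from $K$ down to the relevant subcomplexes of each $K_I$.
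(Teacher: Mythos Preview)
Your proof is correct, and since the paper states the corollary with \qedno\ (no proof given), your argument supplies exactly the kind of verification the paper leaves implicit. There is a minor simplification available: rather than first invoking Theorem~\ref{thm:inPcupTiffullsubcompleteskel} and then applying Theorem~\ref{thm:highlyneighbourly} to each $K_I$ (which forces you to check $C_{K_I}\subseteq C_K$), you can apply the second statement of Theorem~\ref{thm:highlyneighbourly} directly to $K$ itself, once you observe that for a $2$-dimensional $K$ every $K_I\in C_K$ is automatically $(\dim K_I-1)$-neighbourly (trivially when $\dim K_I=1$, and by the no-missing-edges condition when $\dim K_I=2$). Your detour through $C_{K_I}\subseteq C_K$ is valid but unnecessary, since Theorem~\ref{thm:highlyneighbourly} already has the reduction via Theorem~\ref{thm:inPcupTiffullsubcompleteskel} built into its proof.
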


\section{Loop space decompositions of certain moment-angle complexes after localisation}
\label{subsec:localisedresult}

In this section, we use the argument in Theorem ~\ref{thm:highlyneighbourly} to show that for a simplicial complex $K$ under certain conditions on the full subcomplexes $K_I \in C_K$, there is a finite set of primes $P$ for which $\Omega \zk \in \prod\mathcal{P}$ localised away from $P$. This set of primes is controlled purely by the underlying simplicial complex. Recall that a simplicial complex $K$ is rationally Golod if all products and higher Massey products in $H^*(\zk;\mathbb{Q})$ are trivial.

\begin{theorem}
\label{thm:localisedprodofspheres}
    Let $K$ be a simplicial complex such that $K_I$ is rationally Golod for all $K_I \in C_K$. For $K_I \in C_K$, let $K_I$ be $k_I$-neighbourly, and let $M = \max_{K_I \in C_K}\{|I|+dim(K_I)-2k_I\}$. Localise away from primes $p \leq \frac{1}{2}M$ and primes $p$ appearing as $p$-torsion in $H_*(|K_I|;\mathbb{Z})$ for any $K_I \in C_K$. Then $\Omega \mathcal{Z}_K \in \prod\mathcal{P}$.
\end{theorem}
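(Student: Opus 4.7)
The plan is to combine the inductive reduction of Theorem~\ref{thm:inPcupTiffullsubcompleteskel} with the rational wedge criterion of Proposition~\ref{prop:rationalgolodmeanssuspension} and the $p$-local rigidity of Lemma~\ref{lem:plocallywedge}. Write $S$ for the set of primes we invert. The pushout induction underlying Theorem~\ref{thm:inPcupTiffullsubcompleteskel} (via Lemma~\ref{decomposesimpcomp} and Theorem~\ref{thm:pushoutofPisinP}) uses only closure of $\prod\mathcal{P}$ under retracts together with the loop/wedge manipulations of Lemma~\ref{lem:suspensionofTisinW'}, both of which remain valid after inverting~$S$. Hence it suffices to prove that $\Omega\mathcal{Z}_{K_I}\in\prod\mathcal{P}$ after inverting $S$, for every $K_I \in C_K$.

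Fix such a $K_I$. Because $K_I$ is rationally Golod, Proposition~\ref{prop:rationalgolodmeanssuspension} gives that $\mathcal{Z}_{K_I}$ is rationally a co-$H$ space, and is therefore rationally a wedge of spheres. The CW-dimension of $\mathcal{Z}_{K_I}$ is $d = |I|+\mathrm{dim}(K_I)+1$, and $k_I$-neighbourliness of $K_I$ forces the connectivity $s$ to satisfy $s \geq 2k_I+2$: indeed, every full subcomplex $K_J\subseteq K_I$ is again $k_I$-neighbourly, so $|K_J|$ is $(k_I-1)$-connected, and the summand $\Sigma^{2+|J|}|K_J|$ in Proposition~\ref{prop:suspensionsplitting} appears only for $|J|\geq k_I+2$, pushing the lowest non-trivial degree of $H_*(\Sigma\mathcal{Z}_{K_I};\mathbb{Z})$ to at least $2k_I+3$. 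Therefore
\[
d - s + 1 \leq |I|+\mathrm{dim}(K_I)-2k_I \leq M.
\]

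Integral torsion in $\mathcal{Z}_{K_I}$ is controlled by the same splitting: every $p$-torsion class arises from $p$-torsion in some $H_*(|K_J|;\mathbb{Z})$ with $J\subseteq I$. Because $K_I\in C_K$ has complete $1$-skeleton, so does every full subcomplex $K_J\subseteq K_I$, and hence $K_J\in C_K$; in particular every such torsion prime lies in $S$. Thus for each prime $p\notin S$ one has $p > M/2 \geq (d-s+1)/2$ and $H_*(\mathcal{Z}_{K_I};\mathbb{Z})$ is $p$-torsion free, so Lemma~\ref{lem:plocallywedge} supplies a $p$-local wedge-of-spheres decomposition of $\mathcal{Z}_{K_I}$. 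The wedge type is read off the free part of $H_*(\mathcal{Z}_{K_I};\mathbb{Z})$ and is therefore independent of $p$ and matches the rationalisation. The Hilton--Milnor theorem then places $\Omega\mathcal{Z}_{K_I}$ into $\prod\mathcal{P}$ at each prime $p\notin S$ and rationally. Since $\Omega\mathcal{Z}_{K_I}$ is an $H$-space, the Sullivan arithmetic square of Theorem~\ref{thm:Sullivansquare} assembles these compatible local equivalences into a single equivalence after inverting~$S$.

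The main technical obstacle is the sharp connectivity estimate $s\geq 2k_I+2$, which converts the naive bound $d-s+1\leq M+1$ into the $d-s+1\leq M$ needed to match the prime threshold $p>M/2$ in Lemma~\ref{lem:plocallywedge}. A secondary point is ensuring that the $p$-local wedge decompositions are mutually compatible, so that Theorem~\ref{thm:Sullivansquare} yields a genuine splitting over $\mathbb{Z}[1/S]$; this coherence is enforced by the fact that the wedge type is already determined by the integral free homology and therefore agrees with the rational splitting at each prime outside~$S$.
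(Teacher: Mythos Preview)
Your proof is correct and follows essentially the same route as the paper: reduce via Theorem~\ref{thm:inPcupTiffullsubcompleteskel} to the subcomplexes $K_I\in C_K$, use rational Golodness together with the connectivity/dimension estimate to feed Lemma~\ref{lem:plocallywedge}, and conclude with Hilton--Milnor. You are in fact more careful than the paper on two points it leaves implicit: (i) that the torsion primes of $H_*(\mathcal{Z}_{K_I};\mathbb{Z})$ are already among the excluded primes because every full $K_J\subseteq K_I$ again lies in $C_K$, and (ii) that the prime-by-prime conclusions of Lemma~\ref{lem:plocallywedge} assemble to a statement after inverting $S$, which you handle via Theorem~\ref{thm:Sullivansquare} applied to the $H$-space $\Omega\mathcal{Z}_{K_I}$.
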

\begin{proof}
        First, note that Theorem ~\ref{thm:inPcupTiffullsubcompleteskel} holds $p$-locally for any prime $p$. By Theorem ~\ref{thm:inPcupTiffullsubcompleteskel}, it suffices to show $\Omega \mathcal{Z}_{K_I} \in \prod\mathcal{P}$ for each $K_I \in C_K$, after localisation away from primes $p \leq \frac{1}{2}M$ and primes $p$ appearing as $p$-torsion in $H_*(|K_I|;\mathbb{Z})$ for any $K_I \in C_K$. Consider $\mathcal{Z}_{K_I}$, where $K_I \in C_K$. By assumption, $K_I$ is rationally Golod. Therefore, by Proposition ~\ref{prop:rationalgolodmeanssuspension} and Proposition ~\ref{prop:suspensionsplitting}, there is a rational homotopy equivalence \[\mathcal{Z}_{K_I} \simeq \bigvee\limits_{J \subseteq I} \Sigma^{1+|J|} |K_J|.\] Since each wedge summand is a suspension, $\mathcal{Z}_{K_I}$ is rationally a wedge of spheres. It can be shown using Proposition ~\ref{prop:suspensionsplitting} that $\mathcal{Z}_{K_I}$ is $(2k_I+2)$-connected, and has dimension $1+|I|+dim(K_I)$. Therefore, by Lemma ~\ref{lem:plocallywedge}, localised away from \[p \leq \frac{1}{2}(1+|I|+dim(K_I)-(2k_I+2)+1) = \frac{1}{2}(|I|+dim(K_I)-2k_I) \leq \frac{1}{2}M\] and those primes $p$ appearing as $p$-torsion in $H_*(|K_J|;\mathbb{Z})$, $\mathcal{Z}_{K_I}$ is homotopy equivalent to a wedge of spheres. The Hilton-Milnor theorem then implies that $\Omega \mathcal{Z}_{K_I} \in \prod\mathcal{P}$. Repeating this argument for each $K_I \in C_K$, by Lemma ~\ref{thm:inPcupTiffullsubcompleteskel}, $\Omega \mathcal{Z}_{K_I} \in \prod\mathcal{P}$ when localised away from primes $p \leq \frac{1}{2}M$ and any primes appearing as $p$-torsion in $H_*(|K_I|;\mathbb{Z})$ for each $K_I \in C_K$.
\end{proof}

Theorem ~\ref{thm:localisedprodofspheres} has interesting connections to a problem posed by McGibbon and Wilkerson. Recall from the Introduction that a space $X$ is called \textit{rationally elliptic} if it has finitely many rational homotopy groups. Otherwise, it is called \textit{rationally hyperbolic}.  McGibbon and Wilkerson \cite{MW} showed that any finite, simply connected, rationally elliptic space $X$ has $\Omega X \in \prod\mathcal{P}$, after localising at a sufficiently large prime. A consequence of a decomposition of this form pointed out is that the Steenrod algebra acts trivially on $H_*(\Omega X;\mathbb{Z}/p\mathbb{Z})$ at almost all primes $p$. They asked the extent to which this holds for rationally hyperbolic spaces. Theorem ~\ref{thm:localisedprodofspheres} gives an analogous result for the moment-angle complexes in Theorem ~\ref{thm:localisedprodofspheres}.
\begin{corollary}
    \label{cor:Steenrodalgebra}
    Let $K$ be a simplicial complex such that $K_I$ is rationally Golod for all $K_I \in C_K$. Then at all but finitely many $p$, the Steenrod algebra acts trivially on $H^*(\Omega \zk;\mathbb{Z}/p\mathbb{Z})$. \qed
\end{corollary}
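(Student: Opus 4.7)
The plan is to combine Theorem ~\ref{thm:localisedprodofspheres} with the observation of McGibbon and Wilkerson that a $p$-local loop space decomposition into factors from $\mathcal{P}$ forces the mod-$p$ Steenrod algebra to act trivially. Under the hypothesis that every $K_I \in C_K$ is rationally Golod, Theorem ~\ref{thm:localisedprodofspheres} produces a finite set of primes $P_0$ (the primes $p \leq \frac{1}{2}M$ together with those appearing as $p$-torsion in $H_*(|K_I|;\mathbb{Z})$ for some $K_I \in C_K$) such that for every prime $p \notin P_0$ there is a $p$-local $H$-space equivalence
\[(\Omega \zk)_{(p)} \simeq \prod_\alpha Y_\alpha\]
with each $Y_\alpha \in \mathcal{P}$.

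For such $p$, the K\"unneth theorem supplies an isomorphism $H^*(\Omega \zk;\mathbb{Z}/p\mathbb{Z}) \cong \bigotimes_\alpha H^*(Y_\alpha;\mathbb{Z}/p\mathbb{Z})$ of algebras over the mod-$p$ Steenrod algebra, so it suffices to verify that Steenrod operations act trivially on each factor that appears. For $Y_\alpha \in \{S^1,S^3,S^7\}$ this holds at every prime by degree reasons, since the positive-degree reduced cohomology is concentrated in a single odd degree below the range of any nontrivial Steenrod operation. For $Y_\alpha = \Omega S^n$ with $n \notin \{2,4,8\}$, the sphere $S^n$ is rationally elliptic, so by McGibbon--Wilkerson \cite{MW} there is a finite set of primes outside of which the Steenrod algebra acts trivially on $H^*(\Omega S^n;\mathbb{Z}/p\mathbb{Z})$.

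The delicate point, and where I expect the main work to be, is that a Hilton--Milnor style decomposition may involve infinitely many distinct factors $\Omega S^n$ of unbounded dimension, so naively taking the union of the finite bad sets over all factor types could fail to be finite. To resolve this I would observe that for a fixed prime $p$, the mod-$p$ cohomology of $\Omega S^n$ is concentrated in degrees that are multiples of $n-1$, so once $n-1$ is sufficiently large compared to $2(p-1)$ the operations $P^j$ (or $Sq^j$ at $p=2$) cannot bridge these degrees on the bottom class for elementary reasons; a standard Kudo transgression argument in the Serre spectral sequence of the path--loop fibration $\Omega S^n \to PS^n \to S^n$ then propagates triviality to all higher classes, using that $P^j$ commutes with transgression and that $P^j$ of the fundamental class of $S^n$ vanishes in $H^*(S^n;\mathbb{Z}/p\mathbb{Z})$ for dimensional reasons. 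Since $\Omega \zk$ is of finite type, only finitely many factors of dimension at most $p$ can appear at any given $p$, so only finitely many factor types contribute potentially bad primes, and the union with $P_0$ remains finite.
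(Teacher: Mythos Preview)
Your overall approach is the paper's: the corollary is stated there with just a \qed, as an immediate consequence of Theorem~\ref{thm:localisedprodofspheres} together with the McGibbon--Wilkerson observation that a $\prod\mathcal{P}$ decomposition forces trivial Steenrod action. Where you diverge is in treating Steenrod triviality on each factor $\Omega S^n$ as only a large-prime phenomenon, which then creates your ``delicate point'' about unboundedly many factor types.

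That detour is unnecessary, and as written your final paragraph does not actually close the argument: your degree argument on the bottom class only applies when $n-1$ is large relative to $2(p-1)$, so the threshold moves with $p$, and you give no reason why the union of the McGibbon--Wilkerson bad sets over the (growing as $p\to\infty$) collection of small-$n$ factors should remain finite. The fix is already implicit in the Kudo sketch you give, applied uniformly rather than only for large $n$. In the path--loop fibration for $S^n$ every class $\gamma_k\in H^{(n-1)k}(\Omega S^n;\mathbb{F}_p)$ is transgressive with $d_n(\gamma_k)=\iota_n\otimes\gamma_{k-1}$; since nothing can hit the fibre column and $E_\infty$ is trivial, $d_n$ is injective on $E_2^{0,t}$ for $t>0$. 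Naturality of the Steenrod operations with respect to transgression, together with $P^j\iota_n=0$ for $j>0$ (which you note), gives $d_n(P^j\gamma_k)=\iota_n\otimes P^j\gamma_{k-1}$, and hence $P^j\gamma_k=0$ by induction on $k$ from $P^j(1)=0$. This holds for \emph{every} prime $p$ and every $n\geq 2$, so once Theorem~\ref{thm:localisedprodofspheres} supplies the decomposition at a prime $p\notin P_0$, each factor already has trivial Steenrod action at that very prime; no case distinction on $n$, no second appeal to \cite{MW}, and no finite-type bookkeeping is needed.
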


One particular family of examples is when $K$ is a $2$-dimensional simplicial complex, Lemma ~\ref{lem:neighbourlyBBCG} implies that every $K_I \in C_K$ is rationally Golod, and so we obtain the following. \begin{corollary}
    \label{cor:2dimSteenrodalgebra}
    Let $K$ be a $2$-dimensional simplicial complex. Then at all but finitely many primes $p$, the Steenrod algebra acts trivially on $H^*(\Omega \zk;\mathbb{Z}/p\mathbb{Z})$. \qed
\end{corollary}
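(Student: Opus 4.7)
The plan is to deduce this corollary directly from Corollary~\ref{cor:Steenrodalgebra} by verifying its hypothesis in the $2$-dimensional setting, namely that every full subcomplex $K_I \in C_K$ is rationally Golod. The previous corollary does all the serious work, so the task reduces to a short combinatorial-homotopical check.

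First I would observe that for any $K_I \in C_K$, the defining property of $C_K$ (no missing edges in the $1$-skeleton) forces $K_I$ to be $1$-neighbourly. Since $K$ has dimension $2$, every full subcomplex satisfies $\dim(K_I) \leq 2$, so $\ceil{\dim(K_I)/2} \leq 1$, and therefore $K_I$ is $\ceil{\dim(K_I)/2}$-neighbourly. Applying Lemma~\ref{lem:neighbourlyBBCG} yields a homotopy equivalence
\[ \mathcal{Z}_{K_I} \simeq \bigvee_{J \notin K_I} \Sigma^{1+|J|}|K_J|, \]
which exhibits $\mathcal{Z}_{K_I}$ as a suspension, hence a co-$H$ space integrally, and so rationally a co-$H$ space. (The degenerate case $|I|=1$ gives $\mathcal{Z}_{K_I}$ contractible, which is vacuously fine.) Proposition~\ref{prop:rationalgolodmeanssuspension} then implies that $K_I$ is rationally Golod.

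Having verified that every $K_I \in C_K$ is rationally Golod, Corollary~\ref{cor:Steenrodalgebra} applies directly and gives that the Steenrod algebra acts trivially on $H^*(\Omega \zk; \mathbb{Z}/p\mathbb{Z})$ for all but finitely many primes $p$. There is no substantive obstacle in this argument: the deep input is Theorem~\ref{thm:localisedprodofspheres}, which after inverting a finite set of primes produces a decomposition of $\Omega \zk$ into spheres and loops on spheres, where the triviality of the Steenrod action is a classical consequence via the McGibbon--Wilkerson reasoning invoked in Section~\ref{subsec:localisedresult}. The only thing worth double-checking is that the $2$-dimensional hypothesis really does force $\ceil{\dim(K_I)/2}$-neighbourliness of every $K_I \in C_K$, but this is immediate from $\dim(K_I) \leq 2$ and the definition of $C_K$.
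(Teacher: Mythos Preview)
Your proposal is correct and follows exactly the route the paper takes: the text preceding the corollary simply states that Lemma~\ref{lem:neighbourlyBBCG} implies every $K_I \in C_K$ is rationally Golod, and then invokes Corollary~\ref{cor:Steenrodalgebra}. You have merely spelled out the one-line justification---that $K_I \in C_K$ forces $1$-neighbourliness while $\dim(K_I)\le 2$ gives $\ceil{\dim(K_I)/2}\le 1$, so Lemma~\ref{lem:neighbourlyBBCG} applies and the resulting suspension splitting makes $\mathcal{Z}_{K_I}$ a co-$H$ space, hence $K_I$ is rationally Golod by Proposition~\ref{prop:rationalgolodmeanssuspension}.
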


It was shown in \cite{BBCG2} that the moment-angle complex associated to $K$ is rationally elliptic if and only if the minimal missing faces of $K$ are mutually disjoint. In particular, if $K$ is a simplicial complex such that there is a vertex which is not adjacent to two other vertices, then $\zk$ is rationally hyperbolic. Corollary ~\ref{cor:2dimSteenrodalgebra} can be used to generate infinitely many rationally hyperbolic examples for which the answer to the question of McGibbon and Wilkerson is affirmative. One other point to note is that the set of primes for which the decomposition of rationally elliptic spaces given by McGibbon and Wilkerson holds is not explicit. The conditions on $K$ in Theorem ~\ref{thm:localisedprodofspheres} give an explicit set of primes for which such a decomposition holds for $\Omega \zk$. Also, this set of primes can certainly be enlarged depending on the combinatorics of $K$. For example, if $K$ is a $2$-dimensional simplicial complex such that $H_*(|K_I|;\mathbb{Z})$ is torsion free for all $K_I \in C_K$, Corollary ~\ref{cor:torsionfree} implies that such a decomposition holds integrally.

\bibliographystyle{amsalpha}

\end{document}